\documentclass{amsart} 
\usepackage{amsmath,amssymb,amsthm}
\usepackage{hyperref}
\usepackage{xcolor}
\usepackage{enumerate}
\usepackage{graphicx}

\setlength{\textwidth}{\paperwidth}
\addtolength{\textwidth}{-3in}
\addtolength{\textheight}{0.5in}
\calclayout

\linespread{1.05}

\def\hat{\widehat}
\def\tilde{\widetilde}
\def \bfo {\begin {eqnarray*} }
\def \efo {\end {eqnarray*} }
\def \ba {\begin {eqnarray*} }
\def \ea {\end {eqnarray*} }
\def \beq {\begin {eqnarray}}
\def \eeq {\end {eqnarray}}
\def \supp {\hbox{supp }}

\def \p {\partial}

\newcommand{\Prob}{{\mathbb P}}

\newcommand{\la}{\lambda}

\newcommand{\R}{{\mathbb R}} 
\newcommand{\Z}{{\mathbb Z}}  
\newcommand{\C}{{\mathbb C}} 
\newcommand{\N}{{\mathbb N}}

\theoremstyle{definition}
\newtheorem{definition}{Definition}[section]

\newtheorem*{construction*}{Construction}
\newtheorem*{notation*}{Notation}

\theoremstyle{theorem}

% Theorems A,B,...
%\newtheorem{THEO}{Theorem}
%\renewcommand\theTHEO{\Alph{THEO}}

\newtheorem{theorem}[definition]{Theorem}
\newtheorem{lemma}[definition]{Lemma} 
\newtheorem{proposition}[definition]{Proposition}

\newtheorem{condition}[definition]{Condition}

\theoremstyle{definition}

\numberwithin{equation}{section}

\begin{document} 

\title[Inverse problems for discrete heat equations and random walks]{Inverse problems for discrete heat equations and random walks for a class of graphs}

\author{Emilia Bl{\aa}sten, Hiroshi Isozaki, Matti Lassas and Jinpeng Lu}

\AtEndDocument{\bigskip\medskip{\footnotesize%
  \textsc{Emilia Bl{\aa}sten: Computational Engineering, School of Engineering Science, LUT University, Lahti campus, 15210 Lahti, Finland} \par  
  \textit{Email address}: \texttt{emilia.blasten@iki.fi} \par
  
  \addvspace{\medskipamount}

  \textsc{Hiroshi Isozaki: Graduate School of Pure and Applied Sciences, Professor Emeritus, University of Tsukuba, Tsukuba, 305-8571, Japan} \par  
  \textit{Email address}: \texttt{isozakih@math.tsukuba.ac.jp} \par
  
  \addvspace{\medskipamount}

  \textsc{Matti Lassas: Department of Mathematics and Statistics, University of Helsinki, FI-00014 Helsinki, Finland} \par  
  \textit{Email address}: \texttt{matti.lassas@helsinki.fi} \par
  
  \addvspace{\medskipamount}
  \textsc{Jinpeng Lu: Department of Mathematics and Statistics, University of Helsinki, FI-00014 Helsinki, Finland} \par
  \textit{Email address}: \texttt{jinpeng.lu@helsinki.fi} \par
}}

%\date{}

\keywords{inverse problem for random walk, inverse spectral problem, heat equation, unique continuation}

\subjclass[2010]{05C50, 05C81, 05C22}

\begin{abstract}
We study the inverse problem of determining a finite weighted graph $(X,E)$ from the source-to-solution map on a vertex subset $B\subset X$ for heat equations on graphs, where the time variable can be either discrete or continuous. We prove that this problem is equivalent to the discrete version of the inverse interior spectral problem, provided that there does not exist a nonzero eigenfunction of the weighted graph Laplacian vanishing identically on $B$. In particular, we consider inverse problems for discrete-time random walks on finite graphs. We show that under a novel geometric condition (called the Two-Points Condition), the graph structure and the transition matrix of the random walk can be uniquely recovered from the distributions of the first passing times on $B$, or from the observation on $B$ of one realization of the random walk.
\end{abstract}

\maketitle

\section{Introduction}

In this paper, we study the inverse problems for heat equations with the discrete graph Laplacian on finite weighted graphs $(X,E)$, where the time variable can be either discrete or continuous. Suppose we are able to measure the source-to-solution map on a given subset $B\subset X$ of vertices. We aim to reconstruct the graph structure, along with the weights, and recover the potential. In particular, we consider inverse problems for discrete-time random walks on finite graphs. Suppose that we are given the distributions of the first passing times on $B$, or are able to observe one realization of a random walk on $B$. We aim to recover the graph structure and the transition matrix of the random walk.

The inverse problems of recovering network parameters have been widely studied in \emph{network tomography} and \emph{optical tomography}, see e.g. \cite{A,BGB,CCLNY,CGHS,RH}. In many situations, direct measurements of network parameters are not possible and one has to rely on inferential methods to provide estimates or predictability.
The network parameters can often be modeled as weights of graphs. One method of recovering the parameters is to observe random walks on the network as a weighted graph and try to recover the transition matrix, as the transition probabilities are often directly related to the weights.
The inverse problems for random walks were studied in \cite{PGM1,PGM2,MOI,RM}, and applications were considered in optical tomography \cite{G92,G01,GM02,GM04,P}, network tomography \cite{GM06,V}, electrical resistor networks \cite{Conklin,Knox-Moradifam,Lawler-Sylvester} and neuroscience \cite{BC}.
These works use different types of random walk measurements at accessible nodes to recover the transition matrix in different settings, assuming the topology of the network is known.

Determining the network topology from limited measurements has been an elusive research problem. The direct problem of how the geometry affects global properties of random walks has been studied with much more success \cite{Barlow,Lawler,Lovasz}. Typical direct problems for random walks study the connections to the spectrum of the graph Laplacian, to the behavior of resistor networks, and to ways of sampling and exploring large networks which cannot be fit into a computer's memory \cite{Cooper,Doyle-Peter,IndexQuality,Sarkar-Moore}. Random walks on social networks have been used to model the spread of diseases and characterize high-risk individuals \cite{Infections}. For the inverse problem of determining the network structure, results were seen mainly for quantum graphs, with continuous time and space domains, and only when the network is a-priori known to be a tree (graph without cycles). Results for other types of graphs have appeared only very recently, see \cite{Avdonin} and the references therein.

In \cite{BILL}, we studied the discrete version of an inverse spectral problem, where we adopted a formulation analogous to the Gel'fand's inverse problem on manifolds with boundary.
The  Gel'fand's inverse  problem   \cite{Ge} for partial differential equations has been a paradigm problem in the study of the mathematical inverse problems and imaging problems arising from applied sciences. 
The combination of the boundary control method, pioneered by Belishev on domains of $\R^n$
and by Belishev and Kurylev on manifolds \cite{BelKur}, and the Tataru's unique continuation theorem \cite{Tataru1}  gave a solution to the inverse problem of determining the isometry type of a Riemannian manifold from given 
boundary spectral data. Generalizations and alternative methods to solve this problem have been studied in e.g. \cite{AKKLT,Bel-heat,BelKa,Caday,HLOS,KrKL,KOP,LassasOksanen}, see  additional references in \cite{Bel-review,KKL,L}.
 The stability of the solutions of these inverse problems have been analyzed in \cite{AKKLT,BKL3,BILL-stability,FIKLN,FILLN,StU}. 
Numerical methods to solve the Gel'fand's inverse  problems
have been studied in \cite{Bel-num,HoopOksanen1,HoopOksanen2}.
 The inverse problems for the heat, wave and  Schr\"odinger equations on manifolds can be reduced 
 to the Gel'fand's inverse  problem, see \cite{Bel-heat,KKL}. In fact, all these problems are 
 equivalent, see  \cite{KKLM}. In this paper, we also consider the equivalence of the analogous problems
 for discrete graphs. Due to the lack of continuous manifold structure, the inverse problems have
 different nature on graphs than on the smooth manifolds and we provide counterexamples for inverse problems on graphs.

An intermediate model between discrete and continuous models is the quantum graphs (e.g. \cite{BK}), namely graphs equipped with differential operators defined on the edges. In this model, a graph is viewed as glued intervals, and the spectral data that are measured are usually the spectra of differential operators on edges subject to the Kirchhoff condition at vertices. For
such graphs, two problems have attracted much attention. In the case where one uses only the spectra of differential operators as data,  Yurko (\cite{Y05,Y09,Y10}) and other researchers (\cite{AK,BrownW,K08}) have developed so called spectral methods to solve the inverse problems.
Due to the existence of isospectral trees, one spectrum is not enough to determine the operator and therefore multiple measurements are necessary. It is known in \cite{Y10} that the potential can be recovered from appropriate spectral measurements of the Sturm-Liouville operator on any finite graph.
An alternative setting is to  consider inverse problems for quantum graphs when one is given the eigenvalues of the differential operator and the values of the eigenfunctions at some of the nodes. 
Avdonin and Belishev and other researchers (\cite{AK,Avdonin2,Avdonin3,Avdonin4,B04,BV}) have shown that it is possible to solve a type of inverse spectral problem for trees. 
With this method, one can recover both the tree structures and differential operators.
It is worth noting that cycles present significant challenges for this method.

In this paper, we consider inverse problems in the purely discrete setting, that is, for the discrete graph Laplacian.
In this model, a graph is a discrete metric space with no differential structure on edges. The graph can be additionally assigned with weights on vertices and edges. The spectrum of the graph Laplacian on discrete graphs is an object of major interest in discrete mathematics (\cite{C,isospectralgraphs,Analysisongraph,Grigoryan,S08}). It is well-known that the spectrum is closely related to geometric properties of graphs, such as the diameter (\cite{CFM94,CGY,ChungYau}) and the Cheeger constant (\cite{Cheeger,C05,F96}). However, due to the existence of isospectral graphs (\cite{Collatz,isospectralgraphs,FK,Tan}), few results are known regarding the determination of the exact structure of a discrete graph from spectral data. The counterexample for the discrete Gel'fand's inverse problem
 in this paper is obtained by showing that certain examples of  isospectral graphs constructed by K. Fujii and A. Katsuda
 \cite{FK} and  by J. Tan
\cite{Tan} admit  eigenfunctions that coincide at certain vertices.

There have been several studies with the goal of determining the structure or weights of a discrete weighted graph from indirect measurements in the field of inverse problems. These studies mainly focused on the \emph{electrical impedance tomography} on resistor networks (\cite{Bor,CB,CEM89,CM00,LST}),
where electrical measurements are performed at a subset of vertices called the boundary. However, there are graph transformations which do not change the electrical data measured at the boundary, such as changing a triangle into a Y-junction, which makes it impossible to determine the exact structure of the inaccessible part of the network in this way. Instead, the focus was to determine the resistor values of given networks, or to find equivalence classes of networks (with unknown topology) that produce a given set of boundary data (\cite{CdV94,CdVGV96,CIM98}).

\medskip
Our approach follows our recent work \cite{BILL} where we introduced a novel geometric condition called the \emph{Two-Points Condition}. We assumed that the graph structure is unknown but in a class of finite graphs satisfying the Two-Points Condition (with respect to accessible nodes). 
In the setting of \cite{BILL}, 
 the given subset of vertices (accessible nodes) was called the ``boundary vertices'', and we considered equations that were only satisfied on vertices outside of the boundary vertices. The solutions on the boundary vertices were determined by given boundary conditions. We proved that if we could measure the eigenvalues and the eigenfunctions on the boundary vertices, we were able to recover the graph structure and the potential, provided that the graph satisfies the Two-Points Condition.

In the setting of the present paper, we adopt an alternative formulation analogous to the inverse problems on manifolds without boundary. 
Namely, we regard all vertices including the given subset $B$ of vertices as ``interior'', and consider equations that are satisfied everywhere including the subset $B$. 
We assume that the graph structure is unknown but in a class of finite graphs satisfying the Two-Points Condition with respect to $B$. 
%The goal is to recover the graph structure and the potential from the measurements for such an equation on the given subset of vertices.
From observations of a random walk at $B$, we not only can recover the transition matrix of the random walk but also recover the graph structure (Theorems \ref{prop-hitting times walk} and \ref{prop-walk}). 
The results are proved by relating to the inverse problems for heat equations and the inverse spectral problem. 
Let us remark that this formulation of regarding $B$ as an ``interior'' subset, although different from other works, tends to simplify notations and arguments. As far as our method goes, the ``interior'' formulation does not cause essential difference from a ``boundary'' formulation.

The Two-Points Condition is crucial to determining the graph structure. One important consequence of the condition is that there does not exist a nonzero eigenfunction for the graph Laplacian vanishing identically on $B$ (Proposition \ref{prop-uc}), i.e. the unique continuation property. It is well-known that the absence of the unique continuation property in general poses a main obstacle to uniqueness problems on graphs. Our Two-Points Condition is a verifiable condition for the unique continuation for general graphs, which makes the unique determination of graph structure plausible.

However, the Two-Points Condition does put considerable restriction on the graph structure, more precisely on graphs with cycles, knowing that the Two-Points Condition is satisfied for all trees (with $B$ being all vertices of degree $1$), see \cite[Section 1.3]{BILL}. For periodic lattices and their perturbations, there is a convenient way to test for this condition. Indeed, \cite[Proposition 1.8]{BILL} states that it suffices to search for a 1-Lipschitz ``height'' function with certain properties.
For example, the Two-Points Condition is satisfied for finite subgraphs of square, hexagonal, triangular, graphite lattices and certain types of perturbations (with suitable choices of subset $B$), see \cite[Section 1.3]{BILL}. 
Let us also mention an example where the Two-Points Condition is not satisfied: the Kagome lattice.
%since there exists nonzero eigenfunction on Kagome lattice.
Another issue is the stability: we do not know if the determination of graph structure is stable with the current reconstruction method.
We plan to explore other types of conditions and reconstruction methods, and study stability problems in future works.

\subsection{Inverse interior spectral problem} \label{subsection-interior-problem} \hfill

\medskip
The inverse interior spectral problem was originally studied for manifolds, see \cite{BKL3,KrKL}.
Consider the eigenvalues $\lambda_j^{_M}$ and eigenfunctions $\varphi_j$ on a closed Riemannian manifold $(M,g)$ satisfying 
\begin{equation*}
-\Delta_g\varphi_j= \lambda_j^{_M}\varphi_j,
\end{equation*}
where $\Delta_g$ is the Laplace-Beltrami operator on $M$.
In the inverse interior spectral problem, one is  given an open set  $B\subset M$ and the collection of data 
$$\big\{B, \; \lambda_j^{_M}, \; \varphi_j|_{B}, \; j=1,2,\cdots \big\},$$
and the goal is to determine the isometry type of $(M,g)$.

\smallskip
We consider the discrete version of the inverse interior spectral problem. Let $(X,E)$ be a finite undirected simple graph with the vertex set $X$ and the edge set $E$. 
Recall that a graph is simple if there is at most one edge between any pair
of vertices and no edge between the same vertex.
For $x,y\in X$, we denote $x\sim y$ if there is an edge in $E$ connecting $x$ to $y$, that is, $\{x,y\}\in E$. Every vertex $x\in  X$ has a measure (weight) $\mu_x>0$ and every edge $\{x,y\}\in E$ has a symmetric weight $g_{xy}=g_{yx}>0$. For a function $u:X \to \mathbb{R}$, the graph Laplacian $\Delta_X$ on $X$ is defined by
\begin{equation}\label{Laplaciandef_int}
  \big(\Delta_X u\big)(x) = \frac{1}{\mu_x}\sum_{y\sim x,\, y\in X} g_{xy}\big( u(y) - u(x)\big), \quad \hbox{$x\in X$}.
\end{equation}
For our consideration, the weights $\mu,g$ can be chosen arbitrarily and usually related to physical situations.
We mention that two frequent choices of weights in graph theory are 
\begin{eqnarray}
\mu\equiv 1,\ g\equiv 1 &&\textrm{(the combinatorial Laplacian)}; \label{comb-Laplacian}\\
\mu_x=\textrm{deg}(x),\,g\equiv 1 &&\textrm{(the normalized Laplacian)} \label{normalized-Laplacian}. 
\end{eqnarray}

One can equip the space of functions on $X$ with an $L^2(X)$-inner product:
\begin{equation}\label{innerproduct_int}
\langle u_1 ,u_2 \rangle_{L^2(X)} = \sum_{x\in X} \mu_x u_1(x)u_2(x),\quad \textrm{for }u_1,u_2:X\to \R.
\end{equation}
Note that for finite graphs, the function space $L^2(X)$ is exactly the space of real-valued functions on $X$. It is straightforward to check that $\Delta_X$ is self-adjoint with respect to the $L^2(X)$-inner product:
\begin{equation}\label{self-adjoint}
\langle \Delta_X u_1 ,u_2 \rangle_{L^2(X)} =\langle u_1 , \Delta_X u_2 \rangle_{L^2(X)}.
\end{equation}

Let $q: X\to \R$ be potential function. We consider the following eigenvalue problem for the discrete Schr\"odinger operator $-\Delta_X+q$,
\begin{equation}\label{eigenvalue_problem_int}
-\Delta_X\phi_j(x)+q(x)\phi_j(x)= \lambda_j\phi_j(x), \quad\hbox{$x\in X$}.
\end{equation}
We emphasize that in our present formulation, the eigenvalue equation above is satisfied on all vertices, with no boundary or boundary condition involved.
Suppose the eigenfunctions $\phi_j$ are orthonormalized with respect to the $L^2(X)$-inner product so that
\begin{equation*}
\{\phi_j\}_{j=1}^{|X|} \textrm{ is a complete orthonormal family of eigenfunctions in } L^2(X),
\end{equation*}
where $|X|$ denotes the cardinality of the vertex set $X$.

\smallskip
Assume that we are able to measure the eigenfunctions on a subset $B\subset X$ of vertices.

\smallskip
\noindent {\bf Inverse interior spectral problem (i):} Suppose we are given the \emph{interior spectral data} on a subset $B\subset X$,
$$
\big\{B,\; \lambda_j,\;  \phi_j|_{B},\; j=1,2,\dots,|X| \big\}.
$$
Can we determine $(X,E)$ and $\mu$, $g$, $q$?
%Note that the edge structure between $B$ does not need to be known.

\begin{figure}[h]
  \begin{center}
    \includegraphics[width=0.85\linewidth]{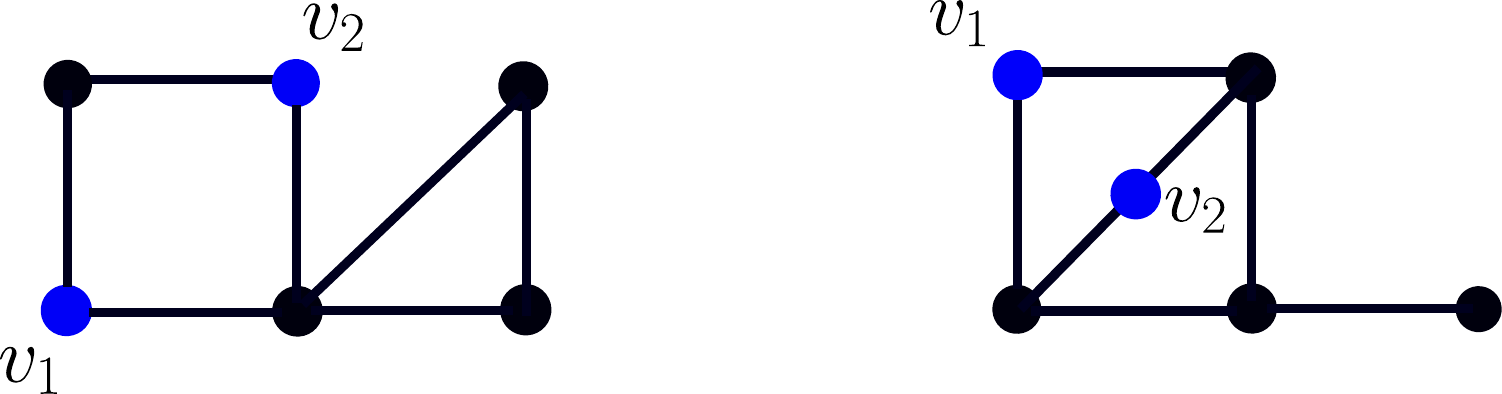}
    \caption{A counterexample for the discrete Gelfand's inverse problem.
    The figure shows two isospectral non-isomorphic graphs for the combinatorial Laplacian \eqref{comb-Laplacian}, on which there exists a complete orthonormal family of eigenfunctions such that their values are identical on the subset $B=\{v_1,v_2\}$ of blue vertices. Furthermore, when the weights are chosen to be $\mu\equiv C\geq 4,\, g\equiv 1$,
 the observations of random walks on the sets $B$ coincide: 
the probabilities $p_{jk}(t)$ that
a random walk (see \eqref{weights-to-walk}) sent at time zero from $v_j\in B$ is observed at time $t$ at $v_k\in B$ are the
same for the two graphs above.
%
 %   these two graphs also admit identical (a type of) measurements on $B$ for the random walks given by \eqref{weights-to-walk} with  
     A detailed formulation can be found in Lemma \ref{lemma-isospectral}. This counterexample for the Gel'fand's inverse problem
 is obtained by showing that certain  isospectral graphs constructed in  \cite{FK,Tan}
 have the additional property that their eigenfunctions coincide at suitable vertices.
Note that the subset $B$ is regarded as ``interior'' in our present formulation, in the sense that the equations are also enforced on $B$.}
    \label{fig_isospectral}
  \end{center}
\end{figure}
%Observe that in this counterexample, the choice of both boundaries violates our Two-Points Condition. And in both graphs, nonzero eigenfunction exists.
%With the Neumann boundary condition imposed, this type of boundary vertices does not change spectral data.
%eigenvalue -3 has multiplicity 2. There exists a choice of eigenfunctions that are identical on the boundary.

\smallskip
Solving the inverse interior spectral problem (i) is not possible without further assumptions due to the existence of counterexamples, see Figure \ref{fig_isospectral}. The eigenvalues and corresponding eigenfunctions for this counterexample are shown in Appendix \ref{counterexample_eigenvector}.
We mention that the graphs in Figure \ref{fig_isospectral} are the isospectral graphs for the combinatorial Laplacian \eqref{comb-Laplacian} with the least number of vertices, see \cite{FK,Tan}. 

However, the inverse interior spectral problem can be reduced to the discrete Gel'fand's inverse boundary spectral problem that we have studied in \cite{BILL}. The details of this reduction can be found in Section \ref{subsection-reduction}. In particular, if the Two-Points Condition (Definition \ref{TPC-interior}) is satisfied for $(X,E)$ with respect to $B$, then the inverse interior spectral problem is solvable. Examples of finite graphs satisfying the Two-Points Condition include trees, subgraphs of periodic lattices and their perturbations (see \cite[Section 1.3]{BILL}).

\begin{definition}\label{extreme}
Given a subset $S\subset X$, we say a point $x_0\in S$ is an \emph{extreme point of $S$ with respect to $B$}, if there exists a point $b\in B$ (possibly $b=x_0$) such that $x_0$ is the unique nearest point in $S$ from $b$, with respect to the graph distance on $(X,E)$.
\end{definition}

Here let us recall a few standard definitions. Let $x,y\in X$. A path of $(X,E)$ from $x$ to $y$ is a sequence of vertices $(v_j)_{j=0}^J$ satisfying $v_0=x$, $v_J=y$ and $v_j\sim v_{j+1}$ for $j=0, \ldots, J-1$. The length of the path is $J$. The \emph{graph distance} on $(X,E)$ between $x$ and $y$ is the minimal length among all paths from $x$ to $y$.

\begin{definition}[Two-Points Condition] \label{TPC-interior}
We say a graph $(X,E)$ satisfies the \emph{Two-Points Condition} with respect to a subset $B\subset X$ if the following is true: for any subset $S\subset X$ with cardinality at least $2$, there exist at least two extreme points of $S$ with respect to $B$.
\end{definition}

\begin{theorem} \label{IISP}
Let $(X,E)$ be a finite connected weighted graph with weights $\mu,g$ and $B\subset X$ be a subset. Assume that $(X,E)$ satisfies the Two-Points Condition with respect to $B$. Suppose we are given the interior spectral data for the Schr\"odinger operator $-\Delta_X+q$. Then the graph $(X,E)$ can be reconstructed from the interior spectral data.

Furthermore, the following conclusions hold. \\
(1) If $\mu$ is given by the degree (or a constant), then the weights $\mu,g$ and the potential $q$ can be uniquely recovered from the interior spectral data. \\
(2) If $q=0$, then the weights $\mu,g$ can be uniquely recovered from the interior spectral data.
\end{theorem}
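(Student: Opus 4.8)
The plan is to reduce Theorem~\ref{IISP} to the discrete inverse \emph{boundary} spectral problem of \cite{BILL} by turning $B$ into a genuine boundary. Given $(X,E)$ and $B$, I would pass to the augmented graph $(\hat X,\hat E)$ obtained by attaching to each $b\in B$ a new pendant vertex $b^{\ast}$, so $\hat X=X\cup B^{\ast}$ with $B^{\ast}=\{b^{\ast}:b\in B\}$ and $\hat E=E\cup\{\{b,b^{\ast}\}:b\in B\}$, with $B^{\ast}$ declared to be the boundary, $\hat\mu=\mu$ and $\hat g=g$ on $X$, and $\hat\mu_{b^{\ast}}=\hat g_{bb^{\ast}}=1$. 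A direct computation shows that the Dirichlet eigenvalue problem for $-\Delta_{\hat X}+\hat q$ on $\hat X$ with vanishing boundary values on $B^{\ast}$ — where $\hat q=q$ on $X\setminus B$, $\hat q(b)=q(b)-1/\mu_b$ for $b\in B$, and $\hat q|_{B^{\ast}}$ is immaterial — is exactly the interior eigenvalue problem \eqref{eigenvalue_problem_int}: the eigenpairs are $\lambda_j$ together with $\phi_j$ extended by zero to $B^{\ast}$, these extensions are automatically $L^2(\hat X)$-orthonormal and complete for the Dirichlet problem, and the boundary ``normal derivatives'' $\partial_\nu\psi_j|_{B^{\ast}}$ reduce to $-\phi_j|_B$. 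Hence the interior spectral data $\{B,\lambda_j,\phi_j|_B\}$ is equivalent to the boundary spectral data of $(\hat X,\hat E,\hat\mu,\hat g,\hat q)$ on $B^{\ast}$.

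The next step, and the technical heart of the reduction, is to check that $(\hat X,\hat E)$ satisfies the Two-Points Condition with respect to $B^{\ast}$ exactly when $(X,E)$ satisfies it with respect to $B$. Since $d_{\hat X}(x,y)=d_X(x,y)$ for $x,y\in X$ and $d_{\hat X}(x,b^{\ast})=d_X(x,b)+1$, one checks that for a subset $S\subset X$ the extreme points of $S$ with respect to $B^{\ast}$ in $\hat X$ are precisely the extreme points of $S$ with respect to $B$ in $X$ (using that Definition~\ref{extreme} permits the witness $b$ to equal the point); a subset $S$ with $|S\cap B^{\ast}|\ge 2$ has each of its points in $B^{\ast}$ as an extreme point; and the case $|S\cap B^{\ast}|=1$ is handled by applying the Two-Points Condition in $X$ to $S\setminus B^{\ast}$ (or to $(S\setminus B^{\ast})\cup\{b_0\}$ where $b_0^{\ast}$ is the lone boundary point of $S$). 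With this in hand, the main theorem of \cite{BILL}, whose proof rests on the unique continuation property (Proposition~\ref{prop-uc}) and a boundary-control/Blagoveshchenskii-type argument, applies to $(\hat X,\hat E,\hat\mu,\hat g,\hat q)$ and reconstructs the graph $(\hat X,\hat E)$; removing the known boundary $B^{\ast}$ and the pendant edges recovers $(X,E)$. This proves the first assertion.

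For the weights and the potential, the key observation is that the interior spectral data is unchanged under the gauge transformations $\mu\mapsto\kappa^2\mu$, $g_{xy}\mapsto\kappa_x\kappa_y\,g_{xy}$, $q\mapsto q-(\Delta_X\kappa)/\kappa$ for any $\kappa:X\to\R$ with $\kappa>0$ and $\kappa|_B\equiv 1$: one checks that $\kappa^{-1}\phi_j$ are then $L^2$-orthonormal eigenfunctions of the transformed operator that still restrict to $\phi_j$ on $B$. By the reduction together with \cite{BILL}, under the Two-Points Condition these are the only transformations compatible with the data; that is, the data determines $(X,E)$ together with $(\mu,g,q)$ up to such a $\kappa$. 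Now in case (1), if $\mu$ is the vertex degree (known once $(X,E)$ is recovered) or a constant, then $\kappa^2\mu$ being again the degree, resp.\ a constant, forces $\kappa$ to be constant, hence $\kappa\equiv 1$ by the normalization $\kappa|_B\equiv 1$; thus $\mu,g,q$ are uniquely determined. In case (2), if $q=0$ then $(\Delta_X\kappa)/\kappa=0$, so $\kappa$ is harmonic on the finite connected graph $(X,E)$ and therefore constant, whence again $\kappa\equiv 1$; thus $\mu,g$ are uniquely determined.

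I expect the main obstacle to lie in the second paragraph: verifying that the Two-Points Condition transfers cleanly to $(\hat X,\hat E)$ for \emph{every} subset, including those meeting the added pendant vertices, and making precise — through the boundary-control reconstruction of \cite{BILL} — the claim that the gauge transformations above exhaust the ambiguity in $(\mu,g,q)$. The remaining ingredients (the equivalence of the two eigenvalue problems, the gauge invariance, and the harmonic-function argument in case (2)) are short direct computations.
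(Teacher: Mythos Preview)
Your overall route is the same as the paper's: attach a pendant vertex to each $b\in B$, check that the Two-Points Condition transfers to the augmented graph, and invoke the results of \cite{BILL}. The paper's argument is however shorter and cleaner, and your version carries one genuine risk.

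The paper does \emph{not} pass to a Dirichlet problem. It considers the \emph{Neumann} eigenvalue problem on the augmented graph $(X,\tilde B,\tilde E)$ with the original $\mu,g,q$ left untouched. Because each pendant $\tilde b$ is joined only to $b$, the Neumann condition forces $\tilde\phi_j(\tilde b)=\phi_j(b)$, so the Neumann boundary spectral data $\big(\lambda_j,\tilde\phi_j|_{\tilde B}\big)$ is literally the interior spectral data $\big(\lambda_j,\phi_j|_B\big)$, and Theorems~1 and~2 of \cite{BILL} apply as stated. Your Dirichlet reduction is correct algebra (up to a sign in the normal derivative), but it forces you to modify the potential and then to invoke \cite{BILL} for \emph{Dirichlet} boundary spectral data; the paper only cites the Neumann version, so you are tacitly assuming an additional result. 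Since the Neumann route gives the identification for free, there is no reason to go through Dirichlet.

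Two smaller points. First, your Two-Points check for subsets meeting $B^\ast$ is unnecessary: in Definition~\ref{TPC-b} the set $S$ ranges over subsets of the \emph{interior}, which here is $X$; the transfer is then exactly the one-line Proposition~\ref{equi-TPC} (using $\tilde d(x,\tilde b)=d(x,b)+1$). Second, for parts~(1) and~(2) the paper simply cites \cite{BILL}, Theorems~1 and~2, without passing through a general gauge characterization. Your gauge argument is correct and conceptually nice, but it presumes that \cite{BILL} proves uniqueness up to exactly the $\kappa$-gauge you describe; if \cite{BILL} only states the two special cases, your appeal to it at that level of generality is a gap.
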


Theorem $\ref{IISP}$ is a direct consequence of Proposition \ref{equi-TPC} and Theorem 1,2 in \cite{BILL}. Another notable consequence of the Two-Points Condition is the unique continuation for eigenfunctions (Proposition \ref{prop-uc}), that is, there does not exist a nonzero eigenfunction, of the Schr\"odinger operator $-\Delta_X+q$ with any potential $q$, vanishing identically on $B$.

\smallskip
\subsection{Discrete-time random walk and heat equation} \hfill

\medskip
We consider the discrete-time random walk on a finite graph $(X,E)$. More precisely, let $H_t^{x_0}$, $t\in \N$ be a discrete-time Markov chain with state space $X$, starting from the vertex $x_0\in X$ at $t=0$ (i.e. $H_0^{x_0}=x_0$). The transition matrix of this Markov chain is defined through the conditional probability of the state changing from $x$ to $y$, i.e.
\begin{equation} \label{pxy}
p_{xy}:=\mathbb{P}(H_{t+1}^{x_0}=y \,|\, H_t^{x_0}=x).
\end{equation}
Assume that $H_t^{x_0}$ is homogeneous so that $p_{xy}$ is independent of $t$.

\begin{condition} \label{condition-Markov}
We impose the following conditions on the random walk $H_t^{x_0}$.
\begin{itemize}
\item[(1)] At any time, the random walk either stays or moves to a connected vertex, i.e. $p_{xy}=0$ unless $x\sim y$ or $x=y$.
\item[(2)] The transition probabilities $p_{xy}$ have the form:
\beq\label{probabilities in c}
p_{xy}=\frac {c_{xy}}{m(x)},\quad m(x)={\sum_{z\sim x\hbox{ \tiny or } z=x} c_{xz}},
\eeq
where $c_{xy}=c_{yx}>0$ for all connected pairs $x\sim y$, and $c_{xx}\geq 0$ for all $x\in X$ (see e.g. \cite[Chapter 1.2]{Barlow}). Note that $c_{xx}$ can be zero.
\end{itemize}
\end{condition}

The form of $p_{xy}$ in \eqref{probabilities in c} may be useful, as the probability $p_{xy}$ is often in a normalized form. For example in an electrical network, the probability can be given by the formula \eqref{probabilities in c}, where $c_{xy}=c_{yx}>0$ denotes the conductance of the edge $\{x,y\}\in E$. For a simpler example, if the random walk moves with equal probability, then one could take $c_{xy}\equiv 1$ and $m(x)=\textrm{deg}(x)$ or $\textrm{deg}(x)+1$, depending on whether the random walk allows staying. 

We remark that it is equivalent to formulate Condition \ref{condition-Markov}(2) in a form $p_{xy}=a(x)b(x,y)$ for some functions $a(x)>0$ and $b(x,y)=b(y,x)>0$. Conversely, if we are given a finite graph $(X,E)$ with \emph{preset} weights $\mu,g$ satisfying $\mu_x \geq \sum_{y\sim x} g_{xy}$, then 
\begin{equation}\label{weights-to-walk}
p_{xy}=\frac{g_{xy}}{\mu_x},\quad p_{xx}=1-\frac{1}{\mu_x}\sum_{y\sim x} g_{xy}
\end{equation}
defines a random walk on $(X,E)$ satisfying Condition \ref{condition-Markov}.
%If $\mu_x > \sum_{y\sim x} g_{xy}$, the random walk defined above is aperiodic.
In the inverse problem for random walks, we consider $c_{xy}$ (or functions $a,b$) being unknown. We aim to recover $c_{xy}$ and hence the transition matrix $(p_{xy})_{x,y\in X}$ by observing the random walk on a given subset $B\subset X$.

\smallskip
Given a function $w:X\to \R$, the expectation 
\begin{equation} \label{def-u}
u(x,t):=\mathbb E(w(H_t^{x}))
\end{equation} 
satisfies (see Section \ref{sec-walk} for details)
\begin{eqnarray} \label{eq-pxy-intro}
D_t u(x,t) = \sum_{y\sim x, y\in X}p_{xy} \big(u(y,t)-u(x,t) \big).
\end{eqnarray}
Recall that the discrete time derivative is defined as 
\begin{equation} \label{eq-discrete-derivative}
D_t u(x,t)=u(x,t+1)-u(x,t).
\end{equation}
This shows that $u(x,t)$ given by \eqref{def-u} satisfies the discrete-time heat equation, or a Feynman-Kac type formula:
\begin{eqnarray}
& &D_t u(x,t)-\Delta_X u(x,t)=0,\quad\hbox{for  }(x,t)\in X \times \N, \label{eq-walk-intro}\\
& &u(x,t)|_{t=0}=w(x), \quad\hbox{for  }x\in X, \label{eq-walk-initial-intro}
\end{eqnarray}
with the choice of weights \eqref{weights-walk} in \eqref{Laplaciandef_int}.

\smallskip
Suppose that the graph structure of $(X,E)$ is unknown but we are given a subset $B\subset X$. Next, we consider inverse problems where we observe properties of a random walk on $B$. Consider a random walk $H^{x_0}_t$ starting at $x_0\in B$. For $y\in X$, we define the 
\emph{first passing time}
\beq
\tau^+(x_0,y)=\inf\{t\ge 1: H_t^{x_0}=y\}.
\eeq
Observe that $\tau^+(x_0,y)$ is a random variable taking values in $\mathbb Z_+\cup \{\infty\}$.

\smallskip
\noindent {\bf  Inverse problem for passing times of random walk:} Suppose that we are given 
$B\subset X$ and we are given the distributions of the first passing times $\tau^+(x_0,y)$ for all $x_0,y\in B$,
that is, the probabilities $r(t,x_0,y):=\Prob \big(\{\tau^+(x_0,y)=t\}\big)$  for all $t\in \mathbb Z_+$.
Using these data, can we determine $(X,E)$ and the transition matrix $(p_{xy})_{x,y\in X}$ of the random walk?

\smallskip
Solving the inverse problem above is not possible without further assumptions, as Figure \ref{fig_isospectral} provides a counterexample to this problem, see Lemma \ref{lemma-isospectral}(2). However, we are able to solve this problem under the Two-Points Condition.

\begin{theorem}\label{prop-hitting times walk}
Let $(X,E)$ be a finite connected graph and $B\subset X$ be a subset of vertices.
Assume that $(X,E)$ satisfies the Two-Points Condition with respect to $B$.
We consider a random walk satisfying Condition \ref{condition-Markov}.
Then the probabilities $\Prob\big(\{\tau^+(x_0,y)=t\}\big)$, for all $x_0,y\in B$ and $t\in \mathbb Z_+$,
determine the graph $(X,E)$
and the weights $c_{xy}$ up to a multiple of constant 
$A_0\in \R_+$, that is, the numbers $A_0c_{xy}$
for all $x,y\in X$.
As a consequence, the transition matrix $(p_{xy})_{x,y\in X}$ of the random walk can be uniquely recovered.
\end{theorem}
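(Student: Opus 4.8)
The plan is to reduce the inverse problem for passing times of the random walk to the inverse interior spectral problem solved by Theorem~\ref{IISP}, by extracting the interior spectral data from the given passing-time distributions. First I would observe that, by Condition~\ref{condition-Markov} and the Feynman--Kac relation \eqref{eq-walk-intro}--\eqref{eq-walk-initial-intro}, the random walk is associated with a discrete heat equation whose generator is $\Delta_X$ with weights $\mu_x=m(x)$ and $g_{xy}=c_{xy}$; in particular this $\Delta_X$ is self-adjoint with respect to the $L^2(X)$-inner product \eqref{innerproduct_int} weighted by $m$, so there is a complete orthonormal eigenbasis $\{\phi_j\}$ with eigenvalues $\lambda_j$. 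The key reduction step is to relate $r(t,x_0,y)=\Prob(\{\tau^+(x_0,y)=t\})$ to the transition probabilities $p^{(t)}_{x_0 y}=\Prob(H^{x_0}_t=y)$ via the standard first-passage decomposition (renewal equation) $p^{(t)}_{x_0 y}=\sum_{s=1}^{t} r(s,x_0,y)\,p^{(t-s)}_{yy}$ for $x_0,y\in B$, together with $p^{(0)}_{yy}=1$. I would argue that this triangular system of equations lets one solve recursively for all the quantities $p^{(t)}_{x_0y}$, $x_0,y\in B$, $t\in\N$, from the data $r(t,x_0,y)$ (first recover $p^{(t)}_{yy}$ for each $y\in B$ by induction on $t$ using the $x_0=y$ equation, then recover the off-diagonal $p^{(t)}_{x_0y}$).

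Next I would identify $p^{(t)}_{x_0y}=\big(e^{t\log P}\big)_{x_0y}$ as the matrix entries of powers of the transition matrix $P=(p_{xy})$, and relate these to the heat kernel: since $\Delta_X=P-I$ on $X$ with the weights above (so $I+\Delta_X=P$), the solution operator of \eqref{eq-walk-intro} at time $t$ is exactly $P^t=(I+\Delta_X)^t$, and its kernel with respect to the measure $\mu=m$ admits the spectral expansion $K(t,x,y)=\sum_j (1+\lambda_j)^t\,\phi_j(x)\phi_j(y)$, normalized so that $p^{(t)}_{x_0y}=m(y)\,K(t,x_0,y)$ (I would fix the normalization carefully using $p^{(0)}_{x_0y}=\delta_{x_0y}$ and $\sum_j\phi_j(x)\phi_j(y)=\delta_{xy}/m(x)$). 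Thus the data determine, for all $x_0,y\in B$ and all $t\in\N$, the quantities $\sum_j (1+\lambda_j)^t\,\phi_j(x_0)\phi_j(y)$.

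The main technical step is then to extract from the generating-function/exponential-sum data $\{\sum_j(1+\lambda_j)^t\phi_j(x_0)\phi_j(y)\}_{t\in\N}$ the individual eigenvalues $\lambda_j$ and the boundary products $\phi_j(x_0)\phi_j(y)$, and from these the interior spectral data $\{\lambda_j,\phi_j|_B\}$. This is a Prony-type / Hankel-matrix argument: distinct values of $1+\lambda_j$ can be separated because the sequences $t\mapsto(1+\lambda_j)^t$ are linearly independent, so one reads off the distinct $\lambda_j$ and the associated ``boundary spectral projections'' $P_\ell(x_0,y):=\sum_{j:\lambda_j=\lambda_\ell}\phi_j(x_0)\phi_j(y)$, a positive semidefinite matrix on $B$; diagonalizing $P_\ell$ on $B$ recovers a choice of values $\phi_j(x)$ for $x\in B$ consistent with some orthonormal eigenbasis, which is exactly the interior spectral data in the sense required by Theorem~\ref{IISP} (here I would invoke, as in \cite{BILL}, that the Two-Points Condition forces unique continuation, Proposition~\ref{prop-uc}, so that the boundary restrictions of an eigenbasis within each eigenspace are linearly independent and the rank of $P_\ell$ equals the multiplicity of $\lambda_\ell$, making this extraction unambiguous up to the inherent orthogonal freedom within eigenspaces, which Theorem~\ref{IISP} already tolerates).

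Finally, with the interior spectral data in hand and the Two-Points Condition assumed, Theorem~\ref{IISP} reconstructs the graph $(X,E)$; and since here $q=0$, part~(2) of Theorem~\ref{IISP} recovers the weights $\mu=m$ and $g=c$ up to the usual normalization — more precisely, the eigenvalues and the weighted orthonormality $\sum_x m(x)\phi_j(x)\phi_k(x)=\delta_{jk}$ determine $m$ only up to an overall positive multiplicative constant $A_0$ (scaling $m\mapsto A_0 m$ and $\phi_j\mapsto A_0^{-1/2}\phi_j$ leaves $\lambda_j$ and $\Delta_X$ unchanged), hence $c_{xy}=g_{xy}$ is determined up to the same constant $A_0$, giving the numbers $A_0c_{xy}$. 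Because the transition probabilities are the scale-invariant ratios $p_{xy}=c_{xy}/m(x)=A_0c_{xy}/(A_0m(x))$, the constant $A_0$ cancels and $(p_{xy})_{x,y\in X}$ is recovered uniquely, which is the claim. I expect the genuinely delicate point to be the bookkeeping in the Prony/Hankel extraction together with the normalization constants relating $r$, $p^{(t)}$, the heat kernel, and the spectral sum — in particular making sure the only ambiguity is the single harmless constant $A_0$ and the within-eigenspace orthogonal freedom that Theorem~\ref{IISP} is already built to handle; the renewal equation and the spectral expansion themselves are routine.
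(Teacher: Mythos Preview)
Your overall strategy---recover the multi-step transition probabilities $p^{(t)}_{xy}$ from the first-passage distributions via the renewal equation, then extract the interior spectral data from the exponential sums and apply Theorem~\ref{IISP}(2)---is sound and is essentially what the paper does (the paper routes the last step through Lemma~\ref{lemma-walk-data} and Theorem~\ref{Thm-equiv}, which amounts to the same Prony/Z-transform extraction you describe). The renewal identity you use is exactly the paper's formula~\eqref{moving probabilities}.

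There is, however, a genuine gap in your normalization step. You correctly write $p^{(t)}_{x_0y}=m(y)\,K(t,x_0,y)$ with $K(t,x_0,y)=\sum_j(1-\lambda_j)^t\phi_j(x_0)\phi_j(y)$ (note the sign: $P=I+\Delta_X$ and $-\Delta_X\phi_j=\lambda_j\phi_j$ give $P\phi_j=(1-\lambda_j)\phi_j$, not $(1+\lambda_j)$), but you then assert that the data determine $\sum_j(1-\lambda_j)^t\phi_j(x_0)\phi_j(y)$. They do not: what is determined is $m(y)$ times this, and the resulting matrices $Q_\ell(x_0,y)=m(y)\sum_{\lambda_k=\lambda_\ell}\phi_k(x_0)\phi_k(y)$ are \emph{not} symmetric, so your ``diagonalize the PSD matrix $P_\ell$'' step does not apply directly. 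You must first recover $m|_B$ (up to a constant $A_0$) in order to strip off the factor $m(y)$ and obtain genuine interior spectral data in the sense required by Theorem~\ref{IISP}; this is also exactly the hypothesis ``$\mu|_B$ known'' that Theorem~\ref{Thm-equiv} needs. The paper devotes a nontrivial portion of its proof to this point: it introduces the lazy modification $\tilde H^x_t$ (which is guaranteed aperiodic), shows that the data determine $\mathbb{P}(\tilde H^x_t=y)$ for $x,y\in B$, and then reads off $m(y)/m(x_0)$ as the ratio of stationary probabilities $\tilde\pi_y/\tilde\pi_{x_0}=\lim_{t\to\infty}\mathbb{P}(\tilde H^x_t=y)/\mathbb{P}(\tilde H^x_t=x_0)$. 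A shorter fix, consistent with your approach, is to use reversibility directly: since $m(x_0)\,p^{(t)}_{x_0y}=m(y)\,p^{(t)}_{yx_0}$ for all $t$ and the graph is connected, the ratios $m(y)/m(x_0)$ are determined for all $x_0,y\in B$ from any $t$ with $p^{(t)}_{x_0y}\neq 0$. Once you insert this step, your argument goes through and matches the paper's conclusion, including the residual constant $A_0$.
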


We also consider inverse problems for a single realization of a random walk.

\smallskip
\noindent {\bf  Inverse problem for a single realization of random walk:} Suppose that we are given $B\subset X$ and we are given the observation on $B$ of one single realization of the random walk $H_t^{x_0},\; t\in \N$ with an unknown starting position $x_0$. Can we determine $(X,E)$ and the transition matrix $(p_{xy})_{x,y\in X}$ of the random walk?

\smallskip
To study this problem, consider a set $\hat X=X\cup\{q_0\}$, where $q_0$  is a new element that does not belong
in $X$. We call $q_0$ ``the point corresponding to an unknown state''
and define, using the random walk $H_t^{x_0}$, a new random process
\beq\label{process hat H}
\hat H_t^{x_0}=\begin{cases}H_t^{x_0},\; \hbox{ if }H_t^{x_0}\in B,\\
q_0,\quad\, \hbox{ if }H_t^{x_0}\in X\setminus B. 
 \end{cases}
\eeq
We say that $\hat H_t^{x_0}$ is the random process given by the observations of $H_t^{x_0}$ in $B$ and denote
$\hat B=B\cup\{q_0\}$.

\begin{theorem}\label{prop-walk}
Let $(X,E)$ be a finite connected graph, $B\subset X$ be a subset of vertices and $q_0\not \in X$.
Assume that $(X,E)$ satisfies the Two-Points Condition with respect to $B$, and that $H_t^{x_0},\; t\in \N$ is
a random walk that satisfies Condition \ref{condition-Markov} and has some  unknown starting position $x_0\in X$.
Suppose we are given $\hat B=B\cup \{q_0\}$  and a single realization of 
 the random process $\hat H_t^{x_0}$.
 Then with probability one, these data uniquely determine
  the graph $(X,E)$ (up to an isometry) and the transition matrix $(p_{xy})_{x,y\in X}$.
\end{theorem}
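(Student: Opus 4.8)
The plan is to reduce the assertion to Theorem \ref{prop-hitting times walk}. Concretely, I would argue that, with probability one, the single observed realization of the process $\hat H_t^{x_0}$ determines the entire family of first passing time distributions $r(t,b_1,b_2)=\Prob\big(\{\tau^+(b_1,b_2)=t\}\big)$ for all $b_1,b_2\in B$ and all $t\in\Z_+$. Once this is established, Theorem \ref{prop-hitting times walk} (which applies because $(X,E)$ satisfies the Two-Points Condition with respect to $B$) yields the graph $(X,E)$ and the transition matrix $(p_{xy})_{x,y\in X}$; and ``determined as a graph'' here means up to isomorphism, which for a simple connected graph is the same as up to isometry of the graph distance, since $d(x,y)=1$ holds exactly when $x\sim y$, so any distance-preserving bijection preserves adjacency.

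The heart of the argument is the recovery of the numbers $r(t,b_1,b_2)$ from one \emph{masked} trajectory. Since $(X,E)$ is finite and connected and $c_{xy}>0$ on every edge, the chain $H_t^{x_0}$ is irreducible on the finite set $X$, hence recurrent: almost surely it visits every vertex, in particular every $b\in B$, infinitely often, and every hitting time $\tau^+(b_1,b_2)$ is a.s. finite. (The Two-Points Condition forces $B\neq\emptyset$ once $|X|\ge 2$, and the case $|X|=1$ is trivial.) Fix $b_1,b_2\in B$. I would exploit the regeneration structure of $H_t^{x_0}$ at its successive visits to $b_1$: with $R_0=\inf\{t\ge 0:H_t^{x_0}=b_1\}$ and $R_{j+1}=\inf\{t>R_j:H_t^{x_0}=b_1\}$, the strong Markov property makes the excursions $\big(H_{R_j+s}^{x_0}\big)_{0\le s<R_{j+1}-R_j}$, $j\ge 0$, i.i.d. copies of a walk started at $b_1$ and run until its first return to $b_1$. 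Grouping these excursions into consecutive blocks, each block ending at the first excursion in which $b_2$ is visited, and reading off from each block the time elapsed from its start at $b_1$ until $b_2$ is first hit, one obtains (by iterating the strong Markov property at the stopping times where the blocks end) genuinely i.i.d. samples $\xi_1,\xi_2,\dots$ with the law of $\tau^+(b_1,b_2)$ (the case $b_1=b_2$ being even simpler, the excursion lengths themselves serving as samples). The key point is that every ingredient of this construction --- the visit times $R_j$ to $b_1$ and whether and when $b_2$ is hit inside a given excursion --- is \emph{observable} from $\hat H_t^{x_0}$, because the masking in \eqref{process hat H} only hides the vertices of $X\setminus B$ while $b_1,b_2\in B$; and although $\hat H_t^{x_0}$ is not itself Markov, this is irrelevant, since every probabilistic statement is made about the underlying chain $H_t^{x_0}$. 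The strong law of large numbers applied to the bounded i.i.d. variables $\mathbf 1\{\xi_i=t\}$ then gives $\frac1N\sum_{i=1}^N\mathbf 1\{\xi_i=t\}\to r(t,b_1,b_2)$ almost surely for each $t$. Intersecting these almost sure events over the finitely many pairs $(b_1,b_2)\in B\times B$ and the countably many $t\in\Z_+$ produces a full-probability event on which the realization determines all the numbers $r(t,b_1,b_2)$, and Theorem \ref{prop-hitting times walk} finishes the proof.

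The step I expect to be the main obstacle is precisely this middle one: extracting the first passing time distributions on $B$ from \emph{one} trajectory that is moreover \emph{collapsed} outside $B$. It has two facets --- turning a single recurrent path into i.i.d. samples, and making sure that lumping $X\setminus B$ into the single symbol $q_0$ does not destroy the information those samples require. The first is handled by regeneration at the visits to a fixed $b_1\in B$; the second works out because $\tau^+(b_1,b_2)$ refers only to the positions $b_1$ and $b_2$, both of which lie in the observed set $B$. Everything else --- the reduction to Theorem \ref{prop-hitting times walk} and the bookkeeping with null sets --- is routine.
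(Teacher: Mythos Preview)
Your proposal is correct, and the overall reduction to Theorem \ref{prop-hitting times walk} is the same as in the paper, but the quantity you extract from the single trajectory is different. The paper does \emph{not} recover the first passing time distributions $r(t,b_1,b_2)$ from the realization. Instead it recovers the ``raw'' probabilities $\Prob(H_T^{y}=z)$ for $y,z\in B$ and $T\in\N$: letting $\tau_n=\tau_n(x_0,y)$ be the $n$-th visit time to $y$, the events $\mathcal A_n=\{H^{x_0}_{\tau_{2Tn}+T}=z\}$ are i.i.d.\ with $\Prob(\mathcal A_n)=\Prob(H_T^{y}=z)$ (strong Markov plus the trivial spacing bound $\tau_{2T(n+1)}>\tau_{2Tn}+T$), and SLLN on the indicators $\mathbf 1_{\mathcal A_n}$ recovers the probability. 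The paper then appeals to the part of the proof of Theorem \ref{prop-hitting times walk} that goes from the data $\{\Prob(H_T^{y}=z):y,z\in B,\,T\in\N\}$ to the graph and the transition matrix.

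Your route is a legitimate alternative: you build i.i.d.\ samples of $\tau^+(b_1,b_2)$ via the regeneration structure at visits to $b_1$, grouping excursions into blocks that terminate at the first $b_1$-return after $b_2$ has been hit, and then invoke Theorem \ref{prop-hitting times walk} as a black box. The paper's sampling scheme is simpler---no excursion-block bookkeeping, just ``look $T$ steps after every $2T$-th visit to $y$''---but it pays for this by reaching inside the proof of the previous theorem rather than citing its statement. Your approach needs a slightly more careful i.i.d.\ argument (independence at the block-end stopping times, not just at the $R_j$'s), but it keeps Theorem \ref{prop-hitting times walk} as an honest black box. Both approaches rely on the same observability remark: everything needed is a function of times at which the walk is at a point of $B$, hence visible in $\hat H_t^{x_0}$.
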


We note that in Theorem \ref{prop-walk}, the assumption that we know $\hat B$  and
 the random process $\hat H_t^{x_0}$ is equivalent to knowing
$B$, the set $\mathcal T_B=\{t\in \N : H_t^{x_0}\in B\}$ and the sequence
$H_t^{x_0}$ for $t\in\mathcal T_B$.

\bigskip
Now we turn to the general version of the discrete-time heat equation on a finite weighted graph $(X,E)$. Let $\Delta_X$ be the weighted graph Laplacian \eqref{Laplaciandef_int} with weights $\mu,g$, and $q:X\to \R$ be a potential function.
Denote by $U_w$ the solution of the following initial value problem for the discrete-time heat equation
\begin{eqnarray}
& &D_t U_w(x,t)-\Delta_X U_w(x,t)+q(x) U_w(x,t)=0, \quad\hbox{for  }(x,t)\in X \times \N, \label{eq-heat-no-boundary} \\
%& &  \p_ \nu u(x,t)=0, \quad\hbox{for  }(x,t)\in \p G\times \R_+,\\
& &U_w(x,t)|_{t=0}=w(x), \quad\hbox{for  }x\in X. \nonumber
\end{eqnarray}
For the control problem, 
let $U^f: X\times \{0,1,\cdots,T\}\to \R$ be the solution of the following non-homogeneous heat equation up to time $T$,
\begin{eqnarray}
& &D_t U^f(x,t)-\Delta_X U^f(x,t)+q(x)U^f(x,t)=f(x,t), \quad\hbox{for  }(x,t)\in X \times \{0,1,\cdots,T\}, \label{eq-non-homoge-intro} \\
%& &  \p_ \nu u(x,t)=0, \quad\hbox{for  }(x,t)\in \p G\times \R_+,\\
& &U^f(x,t)|_{t=0}=0, \quad\hbox{for  }x\in X , \nonumber
\end{eqnarray}
with a real-valued source $f$. 

\smallskip
We state the following result regarding the observability and controllability at a subset $B\subset X$ for the discrete-time heat equation.

\begin{theorem}\label{Thm-observe-control}
Let $(X,E)$ be a finite weighted graph and $B\subset X$.
Assume that there does not exist a nonzero eigenfunction, of the Schr\"odinger operator $-\Delta_X+q$, vanishing identically on $B$.
Then the following statements hold.
\begin{enumerate}
\item  Suppose we are given the interior spectral data $\big(\lambda_j,\phi_j|_B\big)_{j=1}^{|X|}$. Then the measurement $U_w|_{B\times \N}$ determines $\langle w,\phi_j \rangle_{L^2(X)}$ for all $j=1,\cdots,|X|$.
\item  For any $T\geq |X|,\,T\in \Z_+$, we have
$$\big\{U^f(\cdot,T) : \,\supp(f)\subset B\times \{0,1,\dots,T-1\} \big\}=L^2(X).$$
\end{enumerate}
\end{theorem}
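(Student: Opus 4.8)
The plan is to diagonalize the one-step propagator of the discrete-time heat equation and reduce both assertions to a Vandermonde-type separation of spectral components, whose injectivity after restriction to $B$ is precisely what the unique continuation hypothesis provides. Write $L=-\Delta_X+q$ for the Schr\"odinger operator on the finite-dimensional space $L^2(X)$; it is self-adjoint by \eqref{self-adjoint} and the fact that $q$ acts by multiplication, so it has the orthonormal eigenbasis $\{\phi_j\}_{j=1}^{|X|}$ with $L\phi_j=\lambda_j\phi_j$. Set $S=I-L$, the one-step heat propagator; rewriting \eqref{eq-heat-no-boundary} as $U_w(\cdot,t+1)=S\,U_w(\cdot,t)$ with $U_w(\cdot,0)=w$ gives $U_w(\cdot,t)=S^tw=\sum_j(1-\lambda_j)^t\langle w,\phi_j\rangle_{L^2(X)}\phi_j$, while the discrete Duhamel formula applied to \eqref{eq-non-homoge-intro} gives $U^f(\cdot,T)=\sum_{s=0}^{T-1}S^{T-1-s}f(\cdot,s)$.

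For part (1), group the eigenvalues by the distinct values $\nu$ of $1-\lambda_j$ and let $P_\nu$ be the corresponding spectral projection, so that $U_w(x,t)=\sum_\nu\nu^t(P_\nu w)(x)$ for $x\in B$ and $t\ge 0$ (with the convention $0^0=1$ in case $1$ is an eigenvalue of $L$). Since the sequences $t\mapsto\nu^t$ attached to distinct $\nu$ are linearly independent, the numbers $(P_\nu w)(x)$, $x\in B$, are uniquely determined by the data $U_w|_{B\times\N}$ together with the known eigenvalues (concretely, by inverting a Vandermonde system, for which finitely many time steps suffice). It then remains to recover the individual $\langle w,\phi_j\rangle$ from $(P_\nu w)|_B=\sum_{j:\,1-\lambda_j=\nu}\langle w,\phi_j\rangle\,\phi_j|_B$. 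This is possible because the restrictions $\{\phi_j|_B:\,1-\lambda_j=\nu\}$ are linearly independent: a nontrivial linear relation among them would produce a nonzero eigenfunction of $L$ (with eigenvalue $1-\nu$) vanishing identically on $B$, contradicting the hypothesis. Hence the linear system for the coefficients $\langle w,\phi_j\rangle$ is uniquely solvable, which is assertion (1).

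For part (2), by finite-dimensionality the reachable set is $\mathcal R=\mathrm{span}\{S^k\iota_B g:\,0\le k\le T-1,\ g\in L^2(B)\}$, where $\iota_B$ extends a function on $B$ by zero; I would show $\mathcal R^\perp=\{0\}$ in the $L^2(X)$ inner product. If $z\perp\mathcal R$, then $\langle\iota_B g,S^kz\rangle_{L^2(X)}=0$ for all $g$ and all $0\le k\le T-1$ by self-adjointness of $S$, and since $\mu_x>0$ this forces $(S^kz)|_B=0$ for $k=0,\dots,T-1$. Writing $S^kz=\sum_\nu\nu^k P_\nu z$ and restricting to $B$ gives $\sum_\nu\nu^k(P_\nu z)|_B=0$ for $k=0,\dots,T-1$; as the number of distinct $\nu$ is at most $|X|\le T$, the Vandermonde matrix $(\nu^k)$ is invertible and $(P_\nu z)|_B=0$ for every $\nu$. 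Each $P_\nu z$ is an eigenfunction of $L$ or zero, vanishing on $B$, hence $P_\nu z=0$ by hypothesis, so $z=\sum_\nu P_\nu z=0$ and $\mathcal R=L^2(X)$.

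The computations above are essentially routine once the propagator is diagonalized; the only real bookkeeping point is handling the weighted $L^2(X)$ inner product correctly when identifying adjoints and orthogonal complements (e.g.\ that $\langle v,h\rangle_{L^2(X)}=0$ for all $v$ supported in $B$ is equivalent to $h|_B=0$, which uses $\mu_x>0$), and keeping track of the $0^0=1$ convention when $1$ is an eigenvalue. The genuinely essential input, used identically in both parts, is the injectivity on $B$ of every eigenspace of $L$, i.e.\ the unique continuation property; the hypothesis $T\ge|X|$ is needed only to guarantee enough time steps for the Vandermonde separation, the number of distinct eigenvalues never exceeding $|X|$.
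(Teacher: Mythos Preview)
Your argument is correct. For part (2) it is essentially the paper's proof (Proposition \ref{prop-control}): both pass to the orthogonal complement, use self-adjointness of the propagator to get $(S^kz)|_B=0$ for $k=0,\dots,T-1$, and then invoke a Vandermonde system together with the unique continuation hypothesis. Your write-up is a slightly more compact version of the same idea.

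For part (1), however, your route genuinely differs from the paper's (Lemma \ref{observe-discrete-heat}). The paper orders the eigenvalues by decreasing $|1-\lambda_j|$ and peels them off asymptotically via $\lim_{t\to\infty}\big(c_J(z,t)+c_J(z,t+1)\big)$, with a separate end-game for the eigenvalue $\lambda=1$; you instead invert a single finite Vandermonde system in $t$ to recover each $(P_\nu w)|_B$ simultaneously, then solve within each eigenspace using the linear independence of the restrictions $\phi_j|_B$. Your approach is more direct and uses only finitely many time steps (indeed the same Vandermonde mechanism as in part (2)), whereas the paper's limiting procedure uses the full time axis $\N$ and requires the extra averaging $c_J(z,t)+c_J(z,t+1)$ to kill a possible ``conjugate'' eigenvalue with $1-\lambda_j=-(1-\lambda_J)$. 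On the other hand, the asymptotic method makes visible which time scales carry which spectral information and would generalize more readily to settings with infinitely many eigenvalues. Both methods rest identically on the key input: linear independence of $\{\phi_j|_B\}$ within each eigenspace.
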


In particular, Theorem \ref{Thm-observe-control} is valid for connected graphs satisfying the Two-Points Condition due to Proposition \ref{prop-uc}.

%If the graph structure is known, then (1) shows that the initial value is determined.

%Equivalently, it suffices to have $f$ supported in $B\times \{T-N,T-N+1,\cdots,T-1\}$. Or state as follows: for any $v\in L^2(X)$, there exists $f$ supported as above, such that $U^f(\cdot,T)=v$.

%For continuous-time equations, the solution of the heat (or wave) equation does not ``cost time'' to propagate from one point to another. Hence any time interval $[0,T]$, no matter how small, is enough to guarantee the controllability. As a comparison, the minimal time to guarantee the controllability for the discrete-time heat equation is $|X|$.

\smallskip
\subsection{Inverse problems for heat equations} \hfill

\medskip
Let $(X,E)$ be a finite weighted graph with weights $\mu,g$ and $B\subset X$ be a subset of vertices. Let $q:X\to \R$ be a potential function. 
We consider the following inverse problems for the heat equations on graphs. 
For the continuous-time heat equation on $X$, denote by $\Phi^f:X \times \R_{\geq 0} \to \R$ the solution of the following equation
\begin{eqnarray}
& &\frac \p{\p t} \Phi(x,t)-\Delta_X \Phi(x,t)+q(x)\Phi(x,t)=f(x,t),\quad\hbox{for  }(x,t)\in X \times \R_{\geq 0}, \label{eq-cont-heat}\\
%& &  \p_ \nu \Phi(x,t)=f(x,t), \quad\hbox{for  }(x,t)\in \p G\times \R_{+}, \label{eq-Neumann-value} \\
& &\Phi(x,t)|_{t=0}=0, \quad\hbox{for  }x\in X, \label{eq-initial-0}
\end{eqnarray}
where $\supp(f) \subset B\times \R_{\geq 0}$.
We define the source-to-solution map on $B$ for the equation above by $\Lambda^c f:=\Phi^f|_{B\times \R_+}$.

We can also consider the continuous-time non-stationary Schr\"odinger equation. Let $\Psi^f:X \times \R_{\geq 0} \to \C$ be the solution of the equation
\begin{eqnarray}
&& i\frac \p{\p t} \Psi(x,t)-\Delta_X \Psi(x,t)+q(x)\Psi(x,t)=f(x,t),\quad\hbox{for  }(x,t)\in X \times \R_{\geq 0}, \label{eq-cont-Schro} \\
%&& \p_ \nu \Psi(x,t)=f(x,t), \quad\hbox{for  }(x,t)\in \p G\times \R_{+}, \nonumber
& &\Psi(x,t)|_{t=0}=0, \quad\hbox{for  }x\in X, \nonumber
\end{eqnarray}
where the source $f$ is complex-valued and $\supp(f)\subset B\times \R_{\geq 0}$.
Define the source-to-solution map on $B$ for the equation \eqref{eq-cont-Schro} by $\Lambda^s f:=\Psi^f|_{B\times \R_+}$.

\smallskip
For the discrete-time heat equation, denote by $u^f:X \times \N \to \R$ the solution of the equation
\begin{eqnarray}
& &D_t u(x,t)-\Delta_X u(x,t)+q(x)u(x,t)=f(x,t), \quad\hbox{for  }(x,t)\in X \times \N, \label{eq-discrete-heat} \\
%& &\p_ \nu u(x,t)=f(x,t), \quad\hbox{for  } (x,t)\in \p G\times \Z_+, \label{eq-discrete-heat-f}
& &u(x,t)|_{t=0}=0, \quad\hbox{for  }x\in X, \nonumber
\end{eqnarray}
where $\supp(f) \subset B\times \N$.
Define the source-to-solution map on $B$ for the equation \eqref{eq-discrete-heat} by $\Lambda^d f :=u^f|_{B\times \Z_+}$. We emphasis that the equations above are also enforced on $B$ in our present formulation.

\smallskip
We consider the following inverse problems for the heat equations.

\smallskip
{\bf  Inverse problem (ii):} Given $B$ and $\Lambda^c$, determine $(X,E)$ and $\mu$, $g$, $q$.

\smallskip
{\bf  Inverse problem (iii):} Given $B$ and $\Lambda^s$, determine $(X,E)$ and $\mu$, $g$, $q$.

\smallskip
{\bf  Inverse problem (iv):} Given $B$ and $\Lambda^d$, determine $(X,E)$ and $\mu$, $g$, $q$.

\begin{theorem}\label{Thm-equiv}
Let $(X,E)$ be a finite weighted graph with measure $\mu,g$ and $q: X\to \R$ be a potential function.
Assume that there does not exist a nonzero eigenfunction, of the Schr\"odinger operator $-\Delta_X+q$, vanishing identically on $B$.
If $\mu|_{B}$ is known, then the inverse problems (ii)-(iv) are equivalent to the inverse interior spectral problem (i). 
\end{theorem}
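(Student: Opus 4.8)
The plan is to prove the stronger statement that each of the three source-to-solution maps $\Lambda^c,\Lambda^s,\Lambda^d$ and the interior spectral data $\{B,\lambda_j,\phi_j|_B\}_{j=1}^{|X|}$ determine one another; then all four problems literally coincide. Here the interior spectral data is understood, as always, only up to an orthogonal change of basis inside each eigenspace. Write $L=-\Delta_X+q$, let $(\lambda_j,\phi_j)$ be an $L^2(X)$-orthonormal eigensystem, let $\lambda^{(1)}<\dots<\lambda^{(N)}$ be the distinct eigenvalues with spectral projections $P_k$ and multiplicities $m_k$, and set $M_k(x,y)=\sum_{j:\lambda_j=\lambda^{(k)}}\phi_j(x)\phi_j(y)$, the ``reduced'' kernel of $P_k$ (so that $P_k$ acts by $w\mapsto\sum_y\mu_y M_k(\cdot,y)w(y)$). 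Note that each $M_k$, restricted to $B\times B$, is symmetric and positive semidefinite.

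The direction from the spectral data to the maps is immediate. Solving \eqref{eq-cont-heat}, \eqref{eq-cont-Schro} and \eqref{eq-discrete-heat} by Duhamel's formula and expanding in $\{\phi_j\}$ gives, for a source $f$ supported in $B\times(\,\cdot\,)$,
\begin{align*}
\Phi^f(x,t)&=\sum_j\phi_j(x)\int_0^t e^{-(t-s)\lambda_j}\langle f(\cdot,s),\phi_j\rangle_{L^2(X)}\,ds,\\
\Psi^f(x,t)&=-i\sum_j\phi_j(x)\int_0^t e^{\,i(t-s)\lambda_j}\langle f(\cdot,s),\phi_j\rangle_{L^2(X)}\,ds,\\
u^f(x,t)&=\sum_j\phi_j(x)\sum_{s=0}^{t-1}(1-\lambda_j)^{t-1-s}\langle f(\cdot,s),\phi_j\rangle_{L^2(X)},
\end{align*}
and since $\langle f(\cdot,s),\phi_j\rangle_{L^2(X)}=\sum_{b\in B}\mu_b f(b,s)\phi_j(b)$ depends only on $f$, on $\phi_j|_B$ and on $\mu|_B$, restricting to $x\in B$ shows that the interior spectral data together with $\mu|_B$ determine $\Lambda^c$, $\Lambda^s$ and $\Lambda^d$.

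For the converse I would feed in elementary sources concentrated at a single $b_0\in B$: for $\Lambda^d$ take $f(x,s)=\delta_{x,b_0}\delta_{s,0}$, and for $\Lambda^c,\Lambda^s$ let the time profile run through an approximate identity at $s=0$ (equivalently, take the Laplace transform in $t$). This isolates, for $x,b\in B$, the time kernels
$$\mu_b\sum_k e^{-\sigma\lambda^{(k)}}M_k(x,b),\qquad -i\mu_b\sum_k e^{\,i\sigma\lambda^{(k)}}M_k(x,b),\qquad \mu_b\sum_k (1-\lambda^{(k)})^{n}M_k(x,b),$$
as functions of $\sigma=t>0$, resp. of $n=t-1\in\N$. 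The modes $\sigma\mapsto e^{-\sigma\lambda^{(k)}}$, $\sigma\mapsto e^{i\sigma\lambda^{(k)}}$ and the sequences $n\mapsto(1-\lambda^{(k)})^{n}$ are linearly independent for distinct real $\lambda^{(k)}$ (Vandermonde), so one reads off the distinct eigenvalues $\lambda^{(k)}$ together with the $B\times B$ blocks $N_k:=\big(M_k(x,b)\big)_{x,b\in B}$; the assumption that $\mu|_B$ is known is used exactly here, to divide out the prefactor $\mu_b$. (In transform language: the relevant Laplace transform, resp. generating function, is a rational function whose poles encode the $\lambda^{(k)}$ and whose residues are the blocks $N_k$.) It remains to pass from the blocks $N_k$ to the interior spectral data, and this is the step that genuinely uses the hypotheses. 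Since no nonzero eigenfunction of $L$ vanishes on $B$ (guaranteed by the Two-Points Condition via Proposition \ref{prop-uc}), the restriction map $\mathrm{Ran}(P_k)\to\R^B$ is injective, so the vectors $\{\phi_j|_B\}_{\lambda_j=\lambda^{(k)}}$ are linearly independent and $\mathrm{rank}\,N_k=m_k$; this recovers every multiplicity, hence the full list $\{\lambda_j\}$ with repetitions. Finally, any factorization $N_k=\sum_{i=1}^{m_k}v_i v_i^{\top}$ with $v_i\in\R^B$ (e.g.\ from the spectral decomposition of $N_k$) is admissible: if $\psi_1,\dots,\psi_{m_k}$ is any orthonormal eigenbasis of $\mathrm{Ran}(P_k)$, then $N_k=\Psi\Psi^{\top}=VV^{\top}$ with $\Psi,V$ of full column rank $m_k$, so $V=\Psi O$ for some $O\in O(m_k)$ and $\phi_i:=\sum_\ell O_{\ell i}\psi_\ell$ is an orthonormal eigenbasis with $\phi_i|_B=v_i$.

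The analytic content is minimal — every ``PDE'' here is a matrix exponential or a linear recursion — so the only real obstacle is conceptual: ensuring the reconstruction outputs \emph{exactly} the interior spectral data of (i), with the correct multiplicities and with eigenfunction restrictions determined up to the unavoidable intra-eigenspace rotation, rather than some weaker invariant. That is precisely where both hypotheses enter: the absence of eigenfunctions vanishing on $B$ forces $\mathrm{rank}\,N_k=m_k$ and makes the factorization meaningful, while knowledge of $\mu|_B$ is what lets the symmetric projection kernels $M_k$ be extracted from (and reinserted into) the source-to-solution data. Dropping either hypothesis breaks the argument, consistent with the counterexample of Figure \ref{fig_isospectral}.
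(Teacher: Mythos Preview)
Your proof is correct and follows essentially the same route as the paper's Lemmas \ref{cont-time-heat}--\ref{discrete-time-heat}: derive the Duhamel representations, feed point sources to isolate the time-kernels $\sum_k e^{-t\lambda^{(k)}}M_k$, $\sum_k e^{it\lambda^{(k)}}M_k$, $\sum_k(1-\lambda^{(k)})^{t-1}M_k$ on $B\times B$, separate the modes to extract each pair $(\lambda^{(k)},N_k)$, and then factor $N_k=VV^\top$ using the no-vanishing hypothesis to recover $\{\phi_j|_B\}$ up to the intra-eigenspace orthogonal freedom. The only cosmetic difference is that the paper carries out the mode separation explicitly via Laplace/Fourier/$Z$-transforms with analytic continuation (and in the discrete case singles out the degenerate eigenvalue $\lambda^{(k)}=1$ for separate treatment), whereas you phrase it as linear independence/Vandermonde and mention the transforms parenthetically; the content is the same.
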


Note that the inverse problems (ii)-(iv) admit a gauge transformation $(\mu,g)\mapsto (c \mu, c g)$, $c\in \R_+$. This is because this gauge transformation does not change the graph Laplacian \eqref{Laplaciandef_int}, and thus does not change the source-to-solution maps. However, the change of $\mu$ affects the normalization of eigenfunctions and therefore changes the interior spectral data.

In particular, the Two-Points Condition (Definition \ref{TPC-interior}) implies the unique continuation for eigenfunctions of the Schr\"odinger operator with any potential and for any choice of weights (Proposition \ref{prop-uc}). Hence the inverse problems (ii)-(iv) are solvable under the Two-Points Condition by Theorem \ref{IISP}, as long as $\mu|_B$ is known.
The analogous result to Theorem \ref{Thm-equiv} for manifolds can be found in \cite{KKL,KKLM}.

\medskip
This paper is organized as follows. In Section \ref{sec-reduction}, we explain the reduction of the inverse interior spectral problem to the inverse boundary spectral problem, and discuss the unique continuation property for eigenfunctions. Sections \ref{sec-heat-boundary} and \ref{sec-walk} are devoted to proving Theorems \ref{Thm-equiv}, \ref{prop-hitting times walk} and \ref{prop-walk}. As an application of the unique continuation for eigenfunctions, we study the observability and controllability for heat equations and prove Theorem \ref{Thm-observe-control} in Section \ref{section-uc}. An example of isospectral graphs with identical interior spectral data is discussed in Appendix \ref{counterexample_eigenvector}.

\medskip
\noindent
{\bf Acknowledgement.}
The authors express their gratitude to anonymous referees for their valuable comments.
H.I. was partially supported by Grant-in-Aid for Scientific Research (C)
20K03667 Japan
Society for the promotion of Science. H.I. is indebted to JSPS.
E.B., M.L.\ and J.L.\ were partially supported by Academy of Finland, grants 273979, 284715, 312110.

\smallskip
\section{Reduction of the inverse interior spectral problem} \label{sec-reduction}

In this section, we show that the inverse interior spectral problem (i) can be reduced to the discrete inverse boundary spectral problem which we have studied in \cite{BILL}. First, let us recall relevant definitions and notations in \cite{BILL}. We note that inverse boundary problems are discussed only in this section, and we will entirely focus on inverse interior problems starting from the next section.

\subsection{A review of the inverse boundary spectral problem} \hfill

\medskip
We say $(G,\partial G,E)$ is a finite graph with boundary if $(G\cup\partial G,E)$ is a finite (undirected simple) graph and $G\cap \partial G=\emptyset$. We call $\partial G$ the set of boundary vertices and call $G$ the set of interior vertices. Note that here the set $\partial G$ (or $G$) is just an arbitrary subset of the whole vertex set.
A pair of connected vertices $x,y\in G\cup\p G$ is denoted by $\{x,y\}\in E$ or simply $x\sim y$. We equip every vertex $x\in G\cup\partial G$ with a measure $\mu_x>0$, and every edge $\{x,y\}\in E$ with a symmetric weight $g_{xy}=g_{yx}>0$. We call a graph with these additional structures a finite weighted graph with boundary, and denote it by $\mathbb{G}=(G,\partial G,E,\mu,g)$.

For a function $u:{{G}}\cup\partial {{G}} \to \mathbb{R}$, the graph Laplacian $\Delta_G$ on ${{G}}$ is defined by
%The degree of a vertex $x$ in $\mathbb{G}$ is defined as the number of
%vertices connected to $x$ by edges in $E$, denoted by $\deg_E(x)$ or $\deg_{\mathbb{G}}(x)$. 
\begin{equation}\label{Laplaciandef}
  \big(\Delta_G u\big)(x) = \frac{1}{\mu_x}\sum_{\substack{y\sim x\\y\in G\cup \partial G}} g_{xy}\big( u(y) - u(x)\big), \quad x\in G,
\end{equation}
and the Neumann boundary value $\partial_{\nu}u$ of $u$ is defined by 
\begin{equation}\label{Neumanndef}
\big(\partial_{\nu} u\big)(z)=\frac{1}{\mu_z}\sum_{\substack{x\sim z\\x\in G}} g_{xz} \big( u(x) - u(z)
  \big), \quad z\in \partial G.
\end{equation}
For $u_1,u_2:G\cup\partial G\to \mathbb{R}$, we define the $L^2(G)$-inner product by
\begin{equation}\label{innerproduct}
\langle u_1 ,u_2 \rangle_{L^2(G)} = \sum_{x\in G} \mu_x u_1(x)u_2(x).
\end{equation}

Let $q:G\to \mathbb{R}$ be a potential function, and we consider the following Neumann eigenvalue problem for the discrete Schr\"odinger operator $-\Delta_G+q$,
\begin{equation} \label{eigenvalueproblem}
    \begin{cases}
      (-\Delta_G+q) \phi_j(x)=\lambda_j \phi_j(x), &x\in G,\\
      \partial_{\nu} \phi_j|_{\partial G}=0 .
    \end{cases}
\end{equation}
Suppose the eigenfunctions $\phi_j$ are orthonormalized with respect to the $L^2(G)$-inner product so that $\{\phi_j\}_{j=1}^{|G|}$ is a complete orthonormal family of eigenfunctions in $L^2(G)$.

\smallskip
\noindent {\bf  Inverse boundary spectral problem:} Suppose we are given $\p G$  and the Neumann boundary spectral data $\big(\lambda_j,\phi_j|_{\p G}\big)_{j=1}^{|G|}$. Can we determine $(G\cup \p G,E)$ and $\mu$, $g$, $q$?

\smallskip
Solving the inverse boundary spectral problem is not possible without further assumptions. A counterexample for the Neumann combinatorial Laplacian can be constructed simply by adding boundary vertices via additional edges connecting to $v_1,v_2$ in Figure \ref{fig_isospectral}. We introduced the Two-Points Condition in \cite{BILL}, and proved that the inverse boundary spectral problem is solvable under the Two-Points Condition (Definition \ref{TPC-b}) and Condition \ref{assumption-boundary-geometry}, see \cite[Theorem 1,2]{BILL}.

\begin{definition}\label{extreme-b}
Let $(G,\partial G,E)$ be a finite graph with boundary.
  Given a subset $S\subset G$, we say a
  point $x_0\in S$ is an \emph{extreme point of $S$ with respect to
    $\partial G$}, if there exists a point $z\in \partial G$ such that
  $x_0$ is the unique nearest point in $S$ from $z$, with respect to
  the graph distance on $(G\cup\partial G,E)$.
\end{definition}

\begin{definition}[Two-Points Condition for graphs with boundary]\label{TPC-b}
We say a finite graph with boundary $(G,\partial G,E)$ satisfies the \emph{Two-Points Condition} if the following is true:
  for any subset $S\subset G$ with
    cardinality at least $2$, there exist at least two extreme points
    of $S$ with respect to $\partial G$.
\end{definition}

\begin{condition} \label{assumption-boundary-geometry}
For any $z\in\partial G$ and any pair of vertices $x,y\in G$, if $x\sim z,\,y\sim z$, then $x\sim y$.
\end{condition}
In particular, Condition \ref{assumption-boundary-geometry} is satisfied if every boundary vertex is connected to only one interior vertex.

\subsection{Unique continuation for eigenfunctions} \hfill

\medskip
One notable consequence of our Two-Points Condition (Definition \ref{TPC-b}) is the unique continuation for the discrete-time wave equation for sufficiently large time.
Next, we consider the unique continuation results for graphs with boundary analogous
to Tataru's unique continuation theorem \cite{Tataru1}, see also \cite{BKL1,BKL2,Laurent-Leautaud}.
On Riemannian manifolds, this 
result states that if a solution $u$ of the wave equation $(\p_t^2-\Delta_g)u=0$ has
vanishing Dirichlet boundary values
$u|_{\Gamma\times (0,T)}=0$ and vanishing Neumann 
 boundary values
$\p_\nu u|_{\Gamma\times (0,T)}=0$, where $\Gamma$ is a non-empty open subset of the boundary, then the solution vanishes inside the manifold in a corresponding causal double cone. Also, if a solution of an elliptic equation, e.g. $-\Delta_g u=\lambda u$, has vanishing Dirichlet and Neumann boundary values on  
a non-empty open subset of the boundary, then the solution $u$ has to be zero.

On a finite weighted graph with boundary $(G,\partial G,E)$, one can consider the following initial value problem of the discrete-time wave equation
\begin{equation}\label{eq-wave}
    \begin{cases}
    D_{tt}  W(x,t)-\Delta_G W(x,t)+q(x)W(x,t)=0, &x\in G,\, t\in \Z_+,\\
      \partial_{\nu} W(x,t)=0, &x\in\partial G,\, t\in \N,\\
      D_t W(x,0) = 0, &x\in G,\\
      W(x,0) = v(x), &x\in G\cup \partial G,
    \end{cases}
\end{equation}
where the discrete time derivative $D_t$ is defined in \eqref{eq-discrete-derivative}, and
$$  D_{tt} u(x,t)=u(x,t+1)-2u(x,t)+u(x,t-1), \qquad t \in \Z_+.$$
We denote by $W^v$ the solution of the wave equation \eqref{eq-wave} with the initial data $v$.

For the wave equation \eqref{eq-wave} on a finite graph, the unique continuation from the boundary may fail, see Figures
\ref{fig_simple_ex}, \ref{fig_larger_ex}. These figures
depict initial values which are also eigenfunctions of the graph Laplacian, and hence the corresponding waves will always be multiples (by the time component) of these initial values. In these particular examples, the boundary values will stay
zero at all times despite the wave being nonzero in the interior.
Our Two-Points Condition guarantees the unique continuation for sufficiently large time, which prevents these examples from appearing.

\begin{figure}
  \begin{center}
    \includegraphics{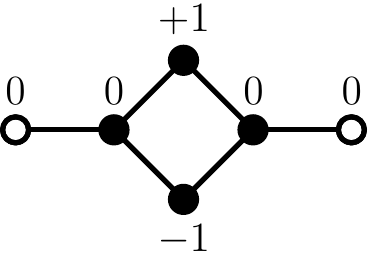}  
     \caption{An example where the unique continuation fails for
     the graph Laplacian. Boundary vertices have white
      centers. We consider the function $\phi: G\cup \p G\to \R$ having the values as indicated in the figure. This function satisfies $-\Delta \phi(x)=\lambda \phi(x)$ for all $x\in G$, where $\Delta$ is the combinatorial Laplacian and $\lambda=2$. The function $\phi$ has vanishing Dirichlet and Neumann boundary values: $\phi|_{\p G}=0$, $\p_\nu \phi|_{\p G}=0$.}
       \label{fig_simple_ex}
  \end{center}
\end{figure}

\begin{figure}
  \begin{center}
    \includegraphics{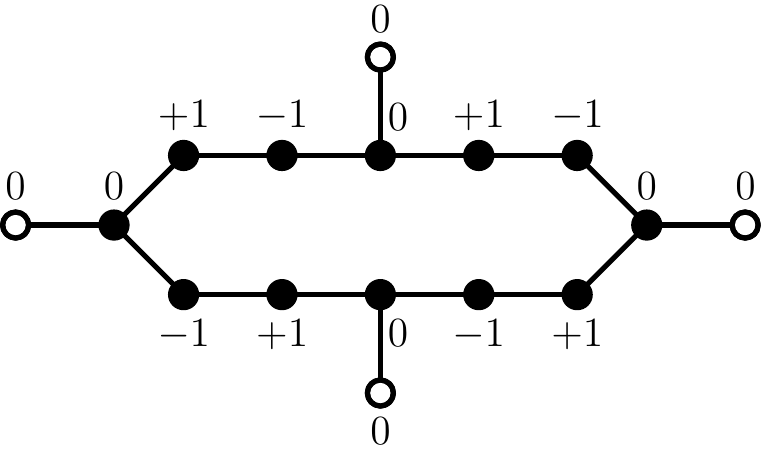}
    \caption{An example where the unique continuation fails for the wave equation with the
      boundary being a resolving set. Boundary vertices have white centers. We consider the function $\phi: G\cup \p G\to \R$ having the values as indicated in the figure, and it satisfies $-\Delta \phi(x)=\lambda \phi(x)$ for all $x\in G$, where $\Delta$ is the combinatorial Laplacian and $\lambda=3$.
With the correct choice of the time component $c(t)$,
   the function $W(x,t)=c(t)\phi(x)$ satisfies the discrete wave equation $D_{tt} W(x,t)-\Delta W(x,t)=0$ and has vanishing Dirichlet and Neumann boundary values: $W|_{\p G\times \mathbb N}=0$,
       $\p_\nu W|_{\p G\times \mathbb N}=0$.}
    \label{fig_larger_ex}
  \end{center}
\end{figure}

\begin{lemma}\label{implications}
  Let $\mathbb G$ be a finite connected weighted graph with boundary, and suppose its reduced graph\footnote{ Recall that the reduced version of a graph is a graph with all its boundary-to-boundary edges removed, see \cite[Definition 2.4]{BILL}.} is connected.
 Assume $\mathbb G$ satisfies the Two-Points Condition (Definition \ref{TPC-b}) and Condition \ref{assumption-boundary-geometry}.
  Then there does not exist a nonzero eigenfunction for the discrete Schr\"odinger operator $-\Delta_G+q$ (with any potential $q$) 
    with both zero Dirichlet and Neumann boundary conditions on
    $\partial G$.
%The existence of one extreme point is enough for this.
\end{lemma}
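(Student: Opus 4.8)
The plan is to mimic, in the discrete setting, the unique continuation for elliptic equations from Cauchy data, and to argue by contradiction. Suppose $\phi$ is a nonzero eigenfunction, $(-\Delta_G+q)\phi=\lambda\phi$ on $G$, with $\phi|_{\partial G}=0$ and $\partial_\nu\phi|_{\partial G}=0$, and let $S=\{x\in G:\phi(x)\neq 0\}$ be its support in $G$, which I will show is empty. The idea is to pick a vertex ``just outside $S$'' that is adjacent to $S$ and sits as close as possible to $\partial G$; at such a vertex one of the equations satisfied by $\phi$ forces $\phi$ itself to vanish on $S$ nearby, which is a contradiction. The Two-Points Condition is exactly what makes this work, because it guarantees a boundary vertex from which the nearest point of $S$ is \emph{unique}, so that the relevant sum over neighbours collapses to a single term.

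Concretely, if $|S|\ge 2$ the Two-Points Condition (Definition~\ref{TPC-b}) provides an extreme point of $S$ with respect to $\partial G$, and if $|S|=1$ its single element is trivially such an extreme point (using $\partial G\neq\emptyset$, part of the standing setup); either way we obtain $x_0\in S$ and $z\in\partial G$ with $x_0$ the unique nearest point of $S$ from $z$. Set $d=\mathrm{dist}(z,x_0)$, so $d\ge 1$ since $z\in\partial G$, $x_0\in G$. By uniqueness every other point of $S$ is at distance $\ge d+1$ from $z$, so, using also $\phi|_{\partial G}=0$, we get $\phi(w)=0$ whenever $\mathrm{dist}(z,w)\le d$ and $w\neq x_0$. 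Now take a shortest path $z=v_0\sim\cdots\sim v_d=x_0$ and put $y=v_{d-1}$: then $y\sim x_0$, $\mathrm{dist}(z,y)=d-1$, and by the triangle inequality every neighbour $w$ of $y$ has $\mathrm{dist}(z,w)\le d$, hence $\phi(w)=0$ for all neighbours $w\neq x_0$ of $y$. If $y\in G$ (so $d\ge 2$), then $\phi(y)=0$ as well (again since $\mathrm{dist}(z,y)<d$ and $y\neq x_0$), and the eigenvalue equation at $y$, together with $\phi(y)=0$, gives $\sum_{w\sim y}g_{yw}\phi(w)=\mu_y(\Delta_G\phi)(y)=\mu_y(q(y)-\lambda)\phi(y)=0$, whence $g_{yx_0}\phi(x_0)=0$ and $\phi(x_0)=0$. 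If instead $y\in\partial G$ (this covers $d=1$, where $y=z$), then $\phi(y)=0$ by the Dirichlet condition, and the vanishing Neumann data at $y$ gives $\sum_{x\sim y,\,x\in G}g_{xy}\phi(x)=0$; since $x_0\in G$ is adjacent to $y$ and all other terms vanish, again $g_{x_0y}\phi(x_0)=0$ and $\phi(x_0)=0$. Both cases contradict $x_0\in S$, so $S=\emptyset$ and $\phi\equiv 0$.

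The only genuinely load-bearing point is the passage, via the Two-Points Condition, to a \emph{uniquely} nearest support point from the boundary: without uniqueness, the sum over the neighbours of $y$ need not reduce to a single term and the argument collapses. The one case distinction to keep track of is whether the penultimate vertex $y$ on the chosen shortest path lies in $G$ (invoke the eigenvalue equation there) or in $\partial G$ (invoke the vanishing Neumann condition); everything else is bookkeeping with the graph distance and the triangle inequality. Connectedness of $\mathbb{G}$ is used implicitly to exclude isolated vertices, for which the statement would fail, whereas Condition~\ref{assumption-boundary-geometry} and connectedness of the reduced graph, although standing hypotheses in \cite{BILL}, do not appear to be needed for this particular argument. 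The same reasoning applied to the sublevel sets $\{w:\mathrm{dist}(z,w)\le k\}$, peeling off one layer at a time, is what underlies the discrete unique continuation used elsewhere in the paper; here only the top layer $k=d$ is required.
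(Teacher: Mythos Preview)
Your proof is correct, and it takes a genuinely different route from the paper's. The paper argues indirectly through the discrete-time wave equation: it shows that a wave with vanishing Cauchy data on $\partial G$ for times $0,\dots,N$ must have zero initial value, invoking the wavefront propagation result \cite[Lemma~3.4]{BILL} (which is where Condition~\ref{assumption-boundary-geometry} and connectedness of the reduced graph enter), and then observes that a nonzero eigenfunction with vanishing Cauchy data would manufacture such a wave by separation of variables. You instead work directly with the elliptic equation: pick the unique nearest support point $x_0$ to some $z\in\partial G$ (supplied by the Two-Points Condition, or trivially when $|S|=1$), step back one vertex along a geodesic to $y$, and read off $\phi(x_0)=0$ from either the eigenvalue equation (if $y\in G$) or the Neumann condition (if $y\in\partial G$), since all other neighbours of $y$ lie in the ball of radius $d$ about $z$ and hence outside $S$. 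Your argument is shorter, self-contained, and --- as you note --- does not use Condition~\ref{assumption-boundary-geometry} or connectedness of the reduced graph; the paper's route, on the other hand, establishes the stronger intermediate statement of finite-time unique continuation for the wave equation, which is of independent interest and is the natural analogue of Tataru's theorem in the continuous setting.
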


\begin{proof}
  We prove that the unique continuation from the
  boundary holds for sufficiently large time, and consequently this prevents the
  existence of eigenfunctions with zero Dirichlet and Neumann
  data. Suppose there exists a nonzero initial value $v:G\cup\partial
  G\to\R$ such that the corresponding wave $W^v$ of
  \eqref{eq-wave} satisfies $W^v|_{\partial G\times \{0,1,\cdots,N\}}=\partial_{\nu}
  W^v|_{\partial G\times \{0,1,\cdots,N\}}=0$, where $N$ is the number of vertices
  in $G$.

  If the initial value $v$ is only supported at one point $x_0\in G$,
  we see that $x_0\in G-N(\partial G)$; otherwise the vanishing boundary
  conditions at time $0$ cannot be satisfied. Then the
  propagation of the wavefront (\cite[Lemma 3.4]{BILL}) implies that
  $W^v(z,d_{re}(x_0,z))\neq 0$ for all $z\in\partial G$, where $d_{re}$ denotes the graph distance function on the reduced graph of $\mathbb{G}$. This contradicts
  the vanishing Cauchy data since $d_{re}(x_0,z)\leq
  N$ for all $z\in\partial G$.

  If the initial value $v$ is supported at multiple points, define
  \[
  S = \{ x\in G \mid v(x)\neq 0 \}.
  \]
  By the Two-Points Condition (Definition \ref{TPC-b}), there exist $x_1\in S$ and $z_1\in
  \partial G$, such that $x_1$ is the unique nearest point in $S$ from
  $z_1$. Note that $v(z_1)=0$ by the vanishing Dirichlet boundary condition.
  Then Lemma 3.4 in \cite{BILL} yields
  $W^v(z_1,d_{re}(x_1,z_1))\neq 0$, which contradicts the vanishing Cauchy data.

  Therefore the initial value $v$ must vanish identically on $G$, and hence $W^v=0$ everywhere at all times. This unique continuation property for
  sufficiently large time implies the lemma. This
  is because if there were such an eigenfunction, one could construct
  a wave with vanishing Cauchy data by multiplying the time
  component to the eigenfunction. This wave would be nonzero but 
  have vanishing Cauchy data on the boundary.
\end{proof}

We remark that the converse of Lemma \ref{implications} is not true. A counterexample is
given by a $3\times 3$ square lattice with $3$ points on one side as
its boundary.

\subsection{Reducing the inverse interior spectral problem to the inverse spectral problem for a graph with boundary} \label{subsection-reduction} \hfill

\medskip
In this subsection, 
let $(X,E)$ be a finite weighted graph with weights $\mu,g$ and $B\subset X$ be a subset. \emph{To distinguish from the notations in previous subsections, we emphasize that the notation $(X,E)$ denotes a standard graph without the notion of the boundary.}
Let $q:X\to \R$ be a potential function and $\big(\lambda_j,\phi_j|_{B} \big)_{j=1}^{|X|}$ be the interior spectral data on $B$.
We reduce the inverse interior spectral problem (i) to the inverse boundary spectral problem as follows.

We take a copy $\tilde{B}$ of the subset $B$ by making a copy $\tilde{b}$ of every vertex $b\in B$. More precisely, we define
\begin{equation}
\tilde{b}:=(b,2), \; b\in B, \quad\; \tilde{B}:=\{\tilde{b}:b\in B\} \subset B\times \Z.
\end{equation}
Consider the following finite graph with boundary:
\begin{equation}\label{copy-boundary}
(X,\tilde{B}, \tilde{E}), \;\textrm{ where }\tilde{E}=E\cup \big\{\{b,\tilde{b}\}: b\in B \big\},
\end{equation}
%Then one can consider the graph $(X\cup \tilde{B}, \tilde{E})$ as a weighted graph with boundary, 
with the boundary vertex set $\tilde{B}$ and the interior vertex set $X$ (that is, $G=X$, $\partial G=\tilde{B}$, $E=\tilde{E}$ in our notations $(G,\partial G,E)$ for graphs with boundary). 
We keep the measure $\mu$ and the weight $g$ unchanged on $X$, while choose $\mu_{\tilde{b}}$ and $g_{b\tilde{b}}$ arbitrarily for all $\tilde{b}\in \tilde{B}$. 
We consider the Neumann eigenvalue problem \eqref{eigenvalueproblem} on $(X,\tilde{B}, \tilde{E})$:
\begin{eqnarray*}
& &-\Delta_X^{^\mathcal{N}} \tilde{\phi}_j(x)+q(x)\tilde{\phi}_j(x)= \tilde{\lambda}_j \tilde{\phi}_j(x), \quad\hbox{ for $x\in X$}, \\
& & \tilde{\phi}_j(\tilde{b})=\tilde{\phi}_j(b), \quad\hbox{ for $\tilde{b}\in \tilde{B}$},
\end{eqnarray*}
where the graph Laplacian \eqref{Laplaciandef} on $(X,\tilde{B}, \tilde{E})$ is given by\footnote{ The definitions of the graph Laplacians for graphs with and without boundary differ by the interior-to-boundary edges. We add a superscript here to distinguish their notations.}
$$(\Delta_X^{^\mathcal{N}} u)(x):=\frac{1}{\mu_x}\sum_{\{x,y\}\in \tilde{E}} g_{xy}\big( u(y) - u(x)\big), \quad \textrm{for }u: X\cup \tilde{B}\to \R,\; x\in X. $$
Due to the form of the Neumann boundary condition in this specific case, we see that $\tilde{\lambda}_j=\lambda_j$ for all $j$, and
the Neumann eigenfunctions $\tilde{\phi}_j$ on $X\cup \tilde{B}$ are simply extensions of $\phi_j$ on $X$:
\begin{equation}\label{eigenfunction-extension}
\tilde{\phi}_j(x)=\begin{cases} \phi_j(x), \,\textrm{ if }x\in X, \\ \phi_j(b), \,\textrm{ if } x=\tilde{b}\in \tilde{B}. \end{cases}
\end{equation}
This shows that knowing the interior spectral data $\big(\lambda_j,\phi_j|_{B} \big)_{j=1}^{|X|}$ is equivalent to knowing the Neumann boundary spectral data $\big(\tilde{\lambda}_j,\tilde{\phi}_j|_{\tilde{B}} \big)_{j=1}^{|X|}$ on $(X,\tilde{B}, \tilde{E})$. Thus the inverse interior spectral problem on $(X,E)$ is reduced to the inverse boundary spectral problem on $(X,\tilde{B}, \tilde{E})$.

\medskip
The analogous Two-Points Condition for the graph $(X,E)$ (without boundary) with respect to a subset $B\subset X$ is already formulated in Definition \ref{TPC-interior}. In fact, it is equivalent to the one for $(X,\tilde{B}, \tilde{E})$.

\begin{proposition} \label{equi-TPC}
The Two-Points Condition (Definition \ref{TPC-interior}) for $(X,E)$ with respect to $B\subset X$ is equivalent to the Two-Points Condition (Definition \ref{TPC-b}) for the graph with boundary $(X,\tilde{B}, \tilde{E})$.
\end{proposition}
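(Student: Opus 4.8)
The plan is to show the two conditions are logically equivalent by relating extreme points of subsets with respect to $B$ in $(X,E)$ to extreme points with respect to $\tilde B$ in $(X,\tilde B,\tilde E)$. The key observation is that the graph $(X,\tilde B,\tilde E)$ is obtained from $(X,E)$ by attaching to each $b\in B$ a pendant vertex $\tilde b$ via the single edge $\{b,\tilde b\}$. Consequently, the graph distance on $(X\cup\tilde B,\tilde E)$ restricts to the original graph distance on $X$, and for any $x\in X$ and any $b\in B$ we have $d_{\tilde E}(\tilde b,x)=d_E(b,x)+1$. Moreover, since $\tilde B$ is an independent set of pendant vertices, the only way to travel between two vertices of $\tilde B$ is through their neighbors in $B$, so distances between boundary vertices play no role in the extreme-point condition (which only concerns nearest points \emph{in $S$} from boundary vertices, and $S\subset X$).

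First I would fix a subset $S\subset X$ with $|S|\ge 2$ and unwind the definitions. In Definition \ref{TPC-interior}, $x_0\in S$ is extreme with respect to $B$ iff there is $b\in B$ (possibly $b=x_0$) with $d_E(b,x_0)<d_E(b,x)$ for all $x\in S\setminus\{x_0\}$. In Definition \ref{TPC-b} applied to $(X,\tilde B,\tilde E)$, $x_0\in S$ is extreme with respect to $\tilde B$ iff there is $\tilde b\in\tilde B$ with $d_{\tilde E}(\tilde b,x_0)<d_{\tilde E}(\tilde b,x)$ for all $x\in S\setminus\{x_0\}$. Using $d_{\tilde E}(\tilde b,x)=d_E(b,x)+1$, the latter inequality is equivalent to $d_E(b,x_0)<d_E(b,x)$ for all $x\in S\setminus\{x_0\}$, which is exactly the former with the witness $b\in B$. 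The only asymmetry is the parenthetical ``possibly $b=x_0$'' in Definition \ref{TPC-interior} versus the use of a genuine boundary vertex $\tilde b\neq x_0$ in Definition \ref{TPC-b}; but since $\tilde b$ and $b$ are distinct vertices and $\tilde b\notin X\supset S$, the case $b=x_0$ in the interior formulation corresponds precisely to $\tilde b$ with $d_{\tilde E}(\tilde b,x_0)=1$, and there is no case obstruction. Hence $x_0$ is an extreme point of $S$ with respect to $B$ if and only if it is an extreme point of $S$ with respect to $\tilde B$.

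With this bijection between extreme points established, the equivalence of the two Two-Points Conditions is immediate: for every $S\subset X$ with $|S|\ge 2$, the set of extreme points with respect to $B$ coincides with the set of extreme points with respect to $\tilde B$, so one set has cardinality at least $2$ if and only if the other does. Note that Definition \ref{TPC-b} quantifies over subsets $S\subset G=X$, which is the same index set as in Definition \ref{TPC-interior}, so no subsets are missed in either direction. This completes the proof.

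The main point requiring care — though it is not a genuine obstacle — is verifying that the graph distance behaves as claimed, namely that adding the pendant vertices $\tilde b$ does not create shortcuts and that $d_{\tilde E}(\tilde b,\cdot)=d_E(b,\cdot)+1$ on $X$. This follows because any path in $(X\cup\tilde B,\tilde E)$ that uses a vertex $\tilde b'\in\tilde B$ must enter and leave it through its unique neighbor $b'\in B$, so such detours only lengthen paths; hence shortest paths between vertices of $X$ stay in $X$, and a shortest path from $\tilde b$ to $x\in X$ consists of the edge $\{\tilde b,b\}$ followed by a shortest $E$-path from $b$ to $x$. One should also record that when $x_0=b$ itself, $d_{\tilde E}(\tilde b,x_0)=1$ is strictly smaller than $d_{\tilde E}(\tilde b,x)=d_E(b,x)+1\ge 2$ for any $x\in S\setminus\{x_0\}$, matching the interior condition's ``$b=x_0$'' case.
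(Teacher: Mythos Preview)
Your proof is correct and follows exactly the same approach as the paper: the key identity $d_{\tilde E}(\tilde b,x)=d_E(b,x)+1$ for $x\in X$, from which the equivalence of extreme points (and hence of the Two-Points Conditions) is immediate. The paper's proof is a one-line statement of this fact, whereas you have supplied the routine verification in full detail.
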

\begin{proof}
This is a direct consequence of the fact that $\tilde{d}(x,\tilde{b})=d(x,b)+1$ for any $x\in X$, where $d$ denotes the graph distance on $(X,E)$ and $\tilde{d}$ denotes the graph distance on $(X\cup\tilde{B}, \tilde{E})$.
\end{proof}

Note that Condition \ref{assumption-boundary-geometry} is automatically satisfied by $(X,\tilde{B}, \tilde{E})$, since every boundary vertex is connected to only one interior vertex by construction. As a consequence, the inverse interior spectral problem is solvable (Theorem \ref{IISP}) under Definition \ref{TPC-interior} by \cite[Theorems 1,2]{BILL}. Analogous to Lemma \ref{implications}, the Two-Points Condition for $(X,E)$ also implies the unique continuation for eigenfunctions.

\begin{proposition} \label{prop-uc}
Let $(X,E)$ be a finite connected weighted graph and $B\subset X$. Assume $(X,E)$ satisfies the Two-Points Condition with respect to $B$ (Definition \ref{TPC-interior}).
Then for any potential $q: X\to \R$, there does not exist a nonzero eigenfunction, of the Schr\"odinger operator $-\Delta_X+q$, vanishing identically on $B$.
\end{proposition}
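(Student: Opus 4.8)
The plan is to reduce the statement for the graph without boundary to the analogous statement for a graph with boundary, and then invoke Lemma \ref{implications}. Recall from Section \ref{subsection-reduction} the construction of the graph with boundary $(X,\tilde B,\tilde E)$ obtained by attaching to each $b\in B$ a pendant boundary vertex $\tilde b$ via a new edge $\{b,\tilde b\}$, with arbitrary weights $\mu_{\tilde b},g_{b\tilde b}>0$. By Proposition \ref{equi-TPC}, the Two-Points Condition for $(X,E)$ with respect to $B$ is equivalent to the Two-Points Condition (Definition \ref{TPC-b}) for $(X,\tilde B,\tilde E)$. Moreover, Condition \ref{assumption-boundary-geometry} holds automatically for $(X,\tilde B,\tilde E)$, since each boundary vertex $\tilde b$ is connected to exactly one interior vertex, namely $b$. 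It remains only to check that the connectedness hypotheses of Lemma \ref{implications} are met: $(X,E)$ is assumed connected, hence so is $X\cup\tilde B$ (each $\tilde b$ hangs off the connected set $X$); and the reduced graph of $(X,\tilde B,\tilde E)$ — obtained by deleting boundary-to-boundary edges — is $(X,\tilde B,\tilde E)$ itself, since there are no boundary-to-boundary edges, so it is connected as well.

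With all hypotheses of Lemma \ref{implications} verified, we conclude that there is no nonzero eigenfunction of the discrete Schr\"odinger operator $-\Delta_X^{^{\mathcal N}}+q$ on $(X,\tilde B,\tilde E)$ with both zero Dirichlet and zero Neumann boundary data on $\tilde B$. The final step is to translate this into the desired statement about $(X,E)$. Suppose $\phi$ is a nonzero eigenfunction of $-\Delta_X+q$ on $(X,E)$ with $\phi|_B=0$. Extend $\phi$ to $\tilde\phi$ on $X\cup\tilde B$ by setting $\tilde\phi(\tilde b)=\phi(b)=0$ for all $\tilde b\in\tilde B$, exactly as in \eqref{eigenfunction-extension}. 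Because the added edge $\{b,\tilde b\}$ has $\tilde\phi(b)=\tilde\phi(\tilde b)=0$, this edge contributes nothing to $(\Delta_X^{^{\mathcal N}}\tilde\phi)(x)$ for any $x\in X$, so $\tilde\phi$ is an eigenfunction of $-\Delta_X^{^{\mathcal N}}+q$ with the same eigenvalue $\lambda$ on the graph with boundary. Its Dirichlet data is $\tilde\phi|_{\tilde B}=0$ by construction, and its Neumann data is, for each $\tilde b\in\tilde B$,
\[
(\partial_\nu\tilde\phi)(\tilde b)=\frac{1}{\mu_{\tilde b}}\,g_{b\tilde b}\big(\tilde\phi(b)-\tilde\phi(\tilde b)\big)=\frac{1}{\mu_{\tilde b}}\,g_{b\tilde b}(0-0)=0.
\]
Thus $\tilde\phi$ is a nonzero eigenfunction with vanishing Cauchy data on $\tilde B$, contradicting Lemma \ref{implications}. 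Hence no such $\phi$ exists, which is the assertion of the proposition.

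I expect the argument to be essentially routine; the only place demanding a little care is the bookkeeping in the reduction — confirming that the pendant-edge construction genuinely preserves the eigenvalue equation on $X$ (it does, precisely because the boundary values vanish, so the extra term $g_{b\tilde b}(\tilde\phi(\tilde b)-\tilde\phi(b))$ in $(\Delta_X^{^{\mathcal N}}\tilde\phi)(b)$ is zero) and that the reduced-graph and connectedness hypotheses of Lemma \ref{implications} are indeed satisfied. An alternative, self-contained route would be to repeat the wave-equation unique continuation argument of Lemma \ref{implications} directly on $(X,E)$, using the interior Two-Points Condition together with the wavefront propagation result \cite[Lemma 3.4]{BILL}: if a solution of $D_{tt}W-\Delta_X W+qW=0$ has $W|_{B\times\{0,\dots,|X|\}}=0$, one takes $S=\{x:W(x,0)\neq 0\text{ or }D_tW(x,0)\neq0\}$, finds an extreme point $x_1$ of $S$ seen uniquely from some $b\in B$, and derives $W(b,d(x_1,b))\neq 0$, a contradiction; an eigenfunction vanishing on $B$ would generate such a wave via separation of variables. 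But since Lemma \ref{implications} is already available and Proposition \ref{equi-TPC} does the translation, the reduction is the shortest path and is the one I would write up.
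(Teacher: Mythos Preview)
Your proof is correct and follows essentially the same route as the paper: reduce to the graph with boundary $(X,\tilde B,\tilde E)$, invoke Proposition \ref{equi-TPC} and Condition \ref{assumption-boundary-geometry}, apply Lemma \ref{implications}, and translate back via the extension \eqref{eigenfunction-extension}. You are in fact more thorough than the paper's version, since you explicitly verify the connectedness hypotheses of Lemma \ref{implications} and spell out why the extended $\tilde\phi$ satisfies the eigenvalue equation with vanishing Cauchy data, details the paper leaves implicit.
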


\begin{proof}
By Proposition \ref{equi-TPC}, the Two-Points Condition for $(X,E)$ with respect to $B$ is equivalent to the Two-Points Condition for $(X,\tilde{B}, \tilde{E})$ defined by \eqref{copy-boundary}. The graph with boundary $(X,\tilde{B}, \tilde{E})$ satisfies Condition \ref{assumption-boundary-geometry}, as every boundary vertex is connected to only one interior vertex by construction. Hence there does not exist a nonzero eigenfunction on $(X, \tilde{B}, \tilde{E})$, for any potential $q$, with zero Dirichlet and Neumann boundary conditions on $\tilde{B}$ by Lemma \ref{implications}. Then the proposition follows from the fact that $\tilde{\phi}_j|_X$ is identical to $\phi_j$ due to \eqref{eigenfunction-extension}.
%$(X,\tilde{B}, \tilde{E})$ is strongly connected, since there are no edges between boundary vertices.
\end{proof}

\section{Discrete- and continuous-time heat equations} \label{sec-heat-boundary}

In this section, let $(X,E)$ be a finite weighted graph with weights $\mu,g$, and $q: X\to \R$ be a potential function. 
We study the equivalence of inverse problems for the heat equations and prove Theorem \ref{Thm-equiv}. We denote $N=|X|$ in this section.

\begin{lemma}\label{cont-time-heat}
Under the assumption of Theorem \ref{Thm-equiv}, if $\mu|_B$ is known, then the inverse problem (ii) is equivalent to the inverse interior spectral problem (i).
\end{lemma}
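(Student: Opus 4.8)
The plan is to make the passage between the source-to-solution map $\Lambda^c$ and the interior spectral data $\big(\lambda_j,\phi_j|_B\big)_{j=1}^N$ completely explicit through the spectral (Duhamel) representation of the heat semigroup, and then to use the assumed unique continuation to undo the single genuinely nontrivial step. First I would fix the $L^2(X)$-orthonormal eigenbasis $\{\phi_j\}_{j=1}^N$ of $-\Delta_X+q$ with eigenvalues $\lambda_j\in\R$ (these may be negative, but finite-dimensionality makes that irrelevant), and solve \eqref{eq-cont-heat}--\eqref{eq-initial-0} by variation of constants (Duhamel's principle), which is legitimate since everything lives in the finite-dimensional space $L^2(X)$. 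This gives, for $x\in B$ and $t>0$,
$$\Lambda^c f(x,t)=\sum_{j=1}^N\phi_j(x)\int_0^t e^{-(t-s)\lambda_j}\,\langle f(\cdot,s),\phi_j\rangle_{L^2(X)}\,ds,$$
and since $\supp f\subset B\times\R_{\geq 0}$ we have $\langle f(\cdot,s),\phi_j\rangle_{L^2(X)}=\sum_{y\in B}\mu_y\,f(y,s)\,\phi_j(y)$. Thus $\Lambda^c$ is integration against the kernel $K(x,t;y,s)=\mathbf{1}_{\{s<t\}}\,\mu_y\sum_j e^{-(t-s)\lambda_j}\phi_j(x)\phi_j(y)$ on $B$. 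The implication (i)$\Rightarrow$(ii) is then immediate: given $\big(\lambda_j,\phi_j|_B\big)_j$ together with $\mu|_B$, the displayed formula evaluates $\Lambda^c f$ for every admissible source.

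For (ii)$\Rightarrow$(i), I would feed $\Lambda^c$ the time-independent single-vertex sources $f_b(y,s):=\mathbf{1}_{\{y=b\}}$, $b\in B$; a short computation gives $\partial_t\big(\Lambda^c f_b\big)(x,t)=\mu_b\sum_j e^{-t\lambda_j}\phi_j(x)\phi_j(b)$ for $x\in B$, $t>0$, and dividing by the known $\mu_b$ recovers the Dirichlet exponential sum $t\mapsto\sum_j e^{-t\lambda_j}\phi_j(x)\phi_j(b)$. Because the exponentials $t\mapsto e^{-\lambda t}$ with distinct $\lambda$ are linearly independent on $(0,\infty)$ (equivalently, by reading off the poles and residues of the Laplace transform), such a finite sum determines its set of exponents and the coefficient attached to each. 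Hence I recover the set $\Sigma$ of distinct eigenvalues of $-\Delta_X+q$ and, for every $\lambda\in\Sigma$, the symmetric positive semidefinite matrix $G_\lambda:=\big(\sum_{j:\lambda_j=\lambda}\phi_j(x)\phi_j(b)\big)_{x,b\in B}$, i.e. the restriction to $B$ of the $\lambda$-spectral projection.

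The remaining task — reconstructing $\big(\lambda_j,\phi_j|_B\big)_j$ from $\big(\Sigma,\{G_\lambda\}_{\lambda\in\Sigma}\big)$ — is where the hypothesis of Theorem \ref{Thm-equiv} is used, and it is the step I expect to be the main obstacle. Since no nonzero eigenfunction of $-\Delta_X+q$ vanishes identically on $B$, the restriction map $R_\lambda:\ker(-\Delta_X+q-\lambda)\to\R^B$ is injective for each $\lambda$; consequently $\operatorname{rank}G_\lambda$ equals the multiplicity $m_\lambda$ of $\lambda$, and $G_\lambda=\sum_{j:\lambda_j=\lambda}(\phi_j|_B)(\phi_j|_B)^{\mathsf T}$ exhibits $G_\lambda$ as a sum of $m_\lambda$ rank-one matrices built from linearly independent vectors. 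A positive semidefinite matrix of rank $m$ admits such a decomposition uniquely up to replacing the generating frame $(v_1,\dots,v_m)$ by $(v_1,\dots,v_m)O$ with $O\in O(m)$, and this is exactly the effect on $(\phi_j|_B)_{\lambda_j=\lambda}$ of passing to another orthonormal eigenbasis of the $\lambda$-eigenspace — the only gauge freedom built into the interior spectral data. So $\Lambda^c$ and the interior spectral data determine each other up to this intrinsic gauge, which is the content of ``equivalent'' here. Without unique continuation the argument genuinely fails: an eigenvalue carried only by eigenfunctions vanishing on $B$ would be invisible in $\Lambda^c$, and even a visible eigenvalue could have $\operatorname{rank}G_\lambda<m_\lambda$; Proposition \ref{prop-uc} (valid under the Two-Points Condition) is precisely what excludes both. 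Two minor points to keep track of at the end are the orthogonal gauge across all eigenspaces, and the harmless scaling gauge $(\mu,g)\mapsto(c\mu,cg)$ noted after Theorem \ref{Thm-equiv}, so that the equivalence holds literally.
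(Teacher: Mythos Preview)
Your proof is correct and follows essentially the same route as the paper: derive the Duhamel kernel $\sum_j e^{-\lambda_j(t-s)}\mu_y\phi_j(x)\phi_j(y)$, extract the exponential sum $\sum_j e^{-t\lambda_j}\phi_j(x)\phi_j(b)$, separate the distinct eigenvalues and their spectral-projection matrices $G_\lambda$ via the Laplace transform (poles/residues), and then use unique continuation to ensure every eigenvalue appears and $\operatorname{rank}G_\lambda=m_\lambda$ so that the factorization $G_\lambda=AA^{\mathsf T}$ recovers $(\phi_j|_B)_{\lambda_j=\lambda}$ up to an $O(m_\lambda)$ gauge. The only cosmetic difference is that the paper feeds in impulsive sources $\delta(s)\delta_{z_1}(z)/\mu_{z_1}$ to read off $Q(z_1,z_2,t)$ directly, whereas you use constant-in-time single-vertex sources and differentiate in $t$; both yield the same exponential sum.
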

\begin{proof}
Let $\{\phi_j\}_{j=1}^N$ be a choice of orthonormalized eigenfunctions of the Schr\"odinger operator $-\Delta_X+q$ corresponding to the eigenvalues $\lambda_j$. By \eqref{self-adjoint},
\begin{eqnarray*}
\langle (-\Delta_X+q)\Phi^f(\cdot,t),\phi_j \rangle_{L^2(X)} &=& \langle \Phi^f(\cdot,t),(-\Delta_X+q)\phi_j \rangle_{L^2(X)}  \\
&=& \lambda_j \langle \Phi^f(\cdot,t),\phi_j \rangle_{L^2(X)} .
\end{eqnarray*}
Then by the heat equation \eqref{eq-cont-heat},
\begin{eqnarray*}
\frac{\p}{\p t} \langle \Phi^f(\cdot,t),\phi_j \rangle_{L^2(X)} &=& \langle (\Delta_X-q) \Phi^f(\cdot,t),\phi_j \rangle_{L^2(X)}+ \langle f(\cdot,t),\phi_j \rangle_{L^2(X)} \\
&=&-\lambda_j \langle \Phi^f(\cdot,t),\phi_j \rangle_{L^2(X)} + \sum_{z\in B} \mu_z\phi_j(z) f(z,t).
\end{eqnarray*}
Note that we have used in the last equality above the support condition of the source: $\supp(f) \subset B \times \R_{\geq 0}$. Hence,
\begin{equation}
\langle \Phi^f(\cdot,t),\phi_j \rangle_{L^2(X)} = \int_{0}^t e^{-\la_j (t-s)} \Big(\sum_{z\in B} \mu_z\phi_j(z) f(z,s) \Big) ds.
\end{equation}
%If change $\mu$ to $4\mu$, then eigenvalues become 4 times, eigenfunctions decrease by 2 times. The term $e^{-\lambda_j t}$ made unclear change to $\Phi^f$.
This gives us
\begin{eqnarray*}
\Phi^f(y,t)&=&\sum_{j=1}^N \langle \Phi^f(\cdot,t),\phi_j \rangle_{L^2(X)} \phi_j(y) \\
&=& \sum_{j=1}^N \phi_j(y) \int_{0}^t e^{-\la_j (t-s)} \Big(\sum_{z\in B} \mu_z\phi_j(z) f(z,s) \Big) ds.
\end{eqnarray*}
Thus by definition,
\begin{equation}\label{Lambda-cont-heat}
(\Lambda^c f)(y,t)=\Phi^f(y,t)=\sum_{z\in B} \int_0^t \Big(\sum_{j=1}^N e^{-\la_j (t-s)} \mu_{z} \phi_j(y)\phi_j(z) \Big) f(z,s) ds, \quad y\in B.
\end{equation}
This shows that the interior spectral data determines $\Lambda^c$. 

To see the converse, take $f(z,s)$ to be the form $\delta(s) \delta_{z_1}(z)/\mu_{z_1}$, $z_1\in B$, and thus we can determine the following quantities:
\begin{equation}
Q(z_1,z_2,t):=\sum_{j=1}^N e^{-\la_j t} \phi_j(z_1)\phi_j(z_2),  \quad\forall z_1,z_2\in B,\,t\in \R_+.
\end{equation}
Taking the Laplace transform of $Q(z_1,z_2,t)$ in the time domain $t\in \R_+$, it is possible to determine
\begin{equation}
\widehat{Q}(z_1,z_2,\omega)=\sum_{j=1}^N \frac{1}{\omega+\lambda_j} \phi_j(z_1)\phi_j(z_2),\quad \omega\in \C,\; {\rm Re}(\omega)>\max_j (-\lambda_j).
\end{equation}

Now assume there does not exist a nonzero eigenfunction vanishing identically on $B$. This guarantees that all spectral information can be extracted from the knowledge of $\widehat{Q}$. More precisely, consider
\begin{equation}\label{sum-Qhat}
\sum_{z\in B} \widehat{Q}(z,z,\omega) = \sum_{j=1}^N \frac{1}{\omega+\lambda_j} \Big(\sum_{z\in B}\phi_j(z)^2 \Big), \quad \omega\in \C,\; {\rm Re}(\omega)>\max_j (-\lambda_j).
\end{equation}
Since there does not exist a nonzero eigenfunction vanishing identically on $B$, then $\sum_{z\in B}\phi_j(z)^2$ is nonzero for all $j$. Hence the right-hand side of 
\eqref{sum-Qhat} is a meromorphic function on $\C$ with the set of poles being the set of eigenvalues. We extend the function $\sum_{z\in B} \widehat{Q}(z,z,\omega)$ to a meromorphic function on $\C$ by analytic continuation, and it is possible to determine the poles, i.e. the eigenvalues $-\lambda_j$. Then one can determine the residues of $\widehat{Q}(z_1,z_2,\omega)$ at $w=-\lambda_j$:
\begin{equation}\label{Qj}
Q_j(z_1,z_2):=\sum_{k\in L_j}\phi_k (z_1)\phi_k(z_2), \quad \forall z_1,z_2\in B,
\end{equation}
where 
\begin{equation}
L_j=\{k:\lambda_k=\lambda_j\}.
\end{equation}

For each $j$, the function $Q_j(\cdot,\cdot)$ can be viewed as a $|B| \times |B|$ matrix $Q_j$ defined by $(Q_j)_{kl}=Q_j(z_k,z_l)$. In the matrix form, say $L_j=\{p+1,\cdots,p+|L_j|\}$, we have
$$Q_j=\Big(\phi_{p+1}\big|_{B},\cdots,\phi_{p+|L_j|}\big|_{B}\Big)_{|B|\times |L_j|} \, \Big(\phi_{p+1}\big|_B,\cdots,\phi_{p+|L_j|}\big|_B \Big)_{|B|\times |L_j|}^T\, .$$
Since there does not exist a nonzero eigenfunction vanishing identically on $B$, the vectors $\{\phi_k|_{B}\}_{k\in L_j}$ for each $j$ are linearly independent. Hence the rank of $Q_j$ is simply $|L_j|$.

When $\lambda_j$ has multiplicity $1$, the matrix $Q_j$ determines $\phi_j|_{B}$ up to a multiplication by $-1$. In general, since $Q_j$ is symmetric and positive semi-definite, it can be decomposed into $Q_j=AA^T$, where $A$ is a $|B|\times |L_j|$ matrix of rank $|L_j|$. Moreover, the decomposition is unique up to an $|L_j|\times |L_j|$ orthogonal matrix. Thus we take the column vectors of $A$, and they are the values on $B$ of orthonormalized eigenfunctions found by applying the orthogonal matrix to $\{\phi_k|_B\}_{k\in L_j}$. This gives us a choice of the interior spectral data on $B$. 
%One can refer to similar arguments in \cite{KKL,KKLM} for the manifold case.
\end{proof}

\begin{lemma}\label{cont-time-Schro}
Under the assumption of Theorem \ref{Thm-equiv}, if $\mu|_B$ is known, then the inverse problem (iii) is equivalent to the inverse interior spectral problem (i).
\end{lemma}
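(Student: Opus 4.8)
The plan is to mirror the proof of Lemma \ref{cont-time-heat}, replacing the heat semigroup with the Schr\"odinger propagator and the Laplace transform with the Fourier transform. First I would expand the solution $\Psi^f$ in the orthonormal eigenbasis $\{\phi_j\}_{j=1}^N$ of $-\Delta_X+q$. Writing $\psi_j(t):=\langle \Psi^f(\cdot,t),\phi_j\rangle_{L^2(X)}$ and using self-adjointness \eqref{self-adjoint} together with the equation \eqref{eq-cont-Schro}, one gets the scalar ODE $i\,\frac{d}{dt}\psi_j(t)=-\lambda_j\psi_j(t)+\sum_{z\in B}\mu_z\phi_j(z)f(z,t)$, where the support condition $\supp(f)\subset B\times\R_{\geq 0}$ has been used. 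Solving with zero initial data gives $\psi_j(t)=-i\int_0^t e^{i\lambda_j(t-s)}\sum_{z\in B}\mu_z\phi_j(z)f(z,s)\,ds$, and hence
\begin{equation*}
(\Lambda^s f)(y,t)=\Psi^f(y,t)=-i\sum_{z\in B}\int_0^t\Big(\sum_{j=1}^N e^{i\lambda_j(t-s)}\mu_z\phi_j(y)\phi_j(z)\Big)f(z,s)\,ds,\quad y\in B.
\end{equation*}
This shows the interior spectral data determines $\Lambda^s$.

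For the converse, I would choose the source $f(z,s)=\delta(s)\delta_{z_1}(z)/\mu_{z_1}$ with $z_1\in B$ to read off, from $\Lambda^s$, the quantities
\begin{equation*}
R(z_1,z_2,t):=\sum_{j=1}^N e^{i\lambda_j t}\phi_j(z_1)\phi_j(z_2),\qquad z_1,z_2\in B,\ t\in\R_+.
\end{equation*}
Since $R(z_1,z_2,\cdot)$ is an almost-periodic function whose frequencies are exactly the eigenvalues (grouped by multiplicity), taking the Fourier transform in $t$ — or, equivalently, computing $\lim_{T\to\infty}\frac1T\int_0^T e^{-i\lambda t}R(z_1,z_2,t)\,dt$ for each $\lambda\in\R$ — recovers, for every real $\lambda$, the partial sum $\sum_{k:\lambda_k=\lambda}\phi_k(z_1)\phi_k(z_2)$. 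As in Lemma \ref{cont-time-heat}, the hypothesis that no nonzero eigenfunction vanishes identically on $B$ is used twice: first, to know that the value $\lambda$ is a genuine eigenvalue precisely when $\sum_{z\in B}(\sum_{k:\lambda_k=\lambda}\phi_k(z)^2)\neq 0$, i.e. the frequency content of $\sum_{z\in B}R(z,z,\cdot)$ is exactly the spectrum; and second, so that the matrix $Q_j$ with entries $(Q_j)_{kl}=\sum_{k'\in L_j}\phi_{k'}(z_k)\phi_{k'}(z_l)$ has rank $|L_j|$, allowing the Cholesky-type factorization $Q_j=AA^T$ and hence the recovery of a valid choice of $\{\phi_k|_B\}_{k\in L_j}$ up to an orthogonal transformation within the eigenspace.

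With $(\lambda_j,\phi_j|_B)_{j=1}^N$ recovered, the interior spectral problem (i) data is in hand, and conversely we have just shown (i)$\Rightarrow$ knowledge of $\Lambda^s$; this establishes the equivalence. The main technical point — and the only place the argument genuinely departs from the heat case — is the passage from the time-domain observable $R(z_1,z_2,t)$ to the spectral decomposition: in the heat case one uses analytic continuation of a meromorphic Laplace transform to isolate poles, whereas here the propagator is oscillatory rather than decaying, so I would instead invoke the theory of Bohr almost-periodic functions (or simply the finite trigonometric-sum structure of $R$) to extract the frequencies $\lambda_j$ and the coefficient matrices $Q_j$. Everything downstream of that extraction — the rank computation, the symmetric positive-semidefinite factorization, and the uniqueness up to orthogonal transformations within each eigenspace — is verbatim the same as in the proof of Lemma \ref{cont-time-heat}, so I would state it briefly and refer back rather than repeat it.
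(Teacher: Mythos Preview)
Your proposal is correct and follows essentially the same route as the paper: eigenfunction expansion of $\Psi^f$, the same explicit formula for $\Lambda^s$, the delta-source probe to isolate the kernel $\sum_j e^{i\lambda_j t}\phi_j(z_1)\phi_j(z_2)$, and then the identical rank/Cholesky argument on $Q_j$. The only cosmetic difference is in how the frequencies and coefficients are extracted from this finite exponential sum --- the paper takes the Fourier transform (in the sense of tempered distributions) of the real part to produce Dirac masses at the $\lambda_j$, whereas you use Bohr time-averages $\lim_{T\to\infty}T^{-1}\int_0^T e^{-i\lambda t}R\,dt$; both are standard and recover the same $Q_j$.
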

\begin{proof}
In the same way as \eqref{Lambda-cont-heat}, \eqref{self-adjoint} and \eqref{eq-cont-Schro} yield
\begin{equation}\label{Lambda-cont-Schro}
(\Lambda^s f)(y,t)=\Psi^f(y,t)=-i\sum_{z\in B} \int_0^t \Big(\sum_{j=1}^N e^{i\la_j (t-s)} \mu_{z} \phi_j(y)\phi_j(z) \Big) f(z,s) ds, \quad y\in B.
\end{equation}
Note that $\Psi^f$ is complex-valued, in which case the $L^2(X)$-inner product \eqref{innerproduct_int} needs to be modified as
$\langle u_1,u_2 \rangle_{L^2(X)} =\sum_{x\in X} \mu_x u_1(x) \overline{u_2(x)}$.

Taking $f(z,s)$ to be of the form $i\delta(s) \delta_{z_1}(z)/\mu_{z_1}$, we can determine
\begin{equation}
F(z_1,z_2,t):=\sum_{j=1}^N e^{i\la_j t} \phi_j(z_1)\phi_j(z_2), \quad\forall z_1,z_2\in B,\,t\in \R_+.
\end{equation}
Taking the Fourier transform of the real part of $F$ in the time domain $t\in \R_+$, it is possible to determine the tempered distribution
\begin{equation}\label{Fhat}
\widehat{F}(z_1,z_2,\xi)=\int_0^{\infty} {\rm Re}\big(F(z_1,z_2,t)\big) e^{-i \xi t} dt=\frac{1}{2}\sum_{j=1}^N \delta(\xi-\lambda_j) \phi_j(z_1)\phi_j(z_2), \quad \xi\in \R.
\end{equation}
Similarly, we consider
\begin{equation}
\sum_{z\in B}\widehat{F}(z,z,\xi)=\frac{1}{2}\sum_{j=1}^N \delta(\xi-\lambda_j) \Big(\sum_{z\in B}\phi_j(z)^2\Big), \quad \xi\in \R.
\end{equation}

Assume there does not exist a nonzero eigenfunction vanishing identically on $B$. Then $\sum_{z\in B}\phi_j(z)^2$ is nonzero for all $j$. 
Choosing test functions of the form $\chi(\xi-a)$, where $\chi\in C_c^{\infty}(\R)$ is supported in a small neighborhood, it is possible to determine the largest  eigenvalue. Then \eqref{Fhat} determines $Q_j(z_1,z_2)$ corresponding to the largest eigenvalue. Repeating this procedure determines all eigenvalues $\lambda_j$ and $Q_j(z_1,z_2)$. The rest of the proof is the same as the last part of Lemma \ref{cont-time-heat}.
\end{proof}

\begin{lemma}\label{discrete-time-heat}
Under the assumption of Theorem \ref{Thm-equiv}, if $\mu|_B$ is known, then the inverse problem (iv) is equivalent to the inverse interior spectral problem (i).
\end{lemma}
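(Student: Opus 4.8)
The plan is to transcribe the proof of Lemma~\ref{cont-time-heat}, replacing the heat semigroup by the discrete-time propagator and the Laplace transform by a generating function in the discrete time variable.

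First I would set up the discrete Duhamel formula in the eigenbasis. Rewriting \eqref{eq-discrete-heat} as $u^f(\cdot,t+1)=u^f(\cdot,t)-(-\Delta_X+q)u^f(\cdot,t)+f(\cdot,t)$ and pairing with an eigenfunction $\phi_j$ via \eqref{self-adjoint}, together with $\supp(f)\subset B\times\N$, gives
\[
\langle u^f(\cdot,t+1),\phi_j\rangle_{L^2(X)}=(1-\lambda_j)\,\langle u^f(\cdot,t),\phi_j\rangle_{L^2(X)}+\sum_{z\in B}\mu_z\,\phi_j(z)\,f(z,t).
\]
Since $u^f(\cdot,0)=0$, iterating yields $\langle u^f(\cdot,t),\phi_j\rangle_{L^2(X)}=\sum_{s=0}^{t-1}(1-\lambda_j)^{t-1-s}\sum_{z\in B}\mu_z\phi_j(z)f(z,s)$, and expanding $u^f(y,t)=\sum_j\langle u^f(\cdot,t),\phi_j\rangle_{L^2(X)}\phi_j(y)$ produces
\[
(\Lambda^d f)(y,t)=\sum_{z\in B}\sum_{s=0}^{t-1}\Big(\sum_{j=1}^{N}(1-\lambda_j)^{t-1-s}\,\mu_z\,\phi_j(y)\phi_j(z)\Big)f(z,s),\qquad y\in B,
\]
which shows (using $\mu|_B$) that the interior spectral data determine $\Lambda^d$.

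For the converse I would feed in the discrete point source $f(z,s)=\delta_{s,0}\,\delta_{z_1}(z)/\mu_{z_1}$ with $z_1\in B$, for which the formula above gives, for all $t\in\Z_+$ and $z_1,z_2\in B$,
\[
Q^d(z_1,z_2,t):=(\Lambda^d f)(z_2,t)=\sum_{j=1}^{N}(1-\lambda_j)^{t-1}\phi_j(z_1)\phi_j(z_2).
\]
The key step is recovering the distinct eigenvalues from these discrete power sums. I would form the generating function $G(z_1,z_2,w)=\sum_{t\ge1}Q^d(z_1,z_2,t)\,w^{t-1}=\sum_{j}\phi_j(z_1)\phi_j(z_2)\,/\,\bigl(1-(1-\lambda_j)w\bigr)$, which is rational in $w$, and sum over $z_1=z_2=z\in B$ to get $\sum_{z\in B}G(z,z,w)=\sum_j\frac{\sum_{z\in B}\phi_j(z)^2}{1-(1-\lambda_j)w}$. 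By the no-vanishing hypothesis each coefficient $\sum_{z\in B}\phi_j(z)^2$ is strictly positive, so no cancellation occurs: after analytic continuation the poles of this rational function are exactly the numbers $1/(1-\lambda_j)$ over the $j$ with $\lambda_j\neq1$, and distinct eigenvalues give distinct poles, so all eigenvalues $\neq1$ are recovered. Whether $1$ is itself an eigenvalue is then detected from the residual of $Q^d(\cdot,\cdot,1)=\sum_j\phi_j(\cdot)\phi_j(\cdot)$ after subtracting the now-known contributions of all eigenvalues $\neq1$ (equivalently, from $\lim_{w\to\infty}\sum_{z\in B}G(z,z,w)$), the no-vanishing hypothesis ensuring this residual is nonzero precisely when $1$ is an eigenvalue.

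With the distinct eigenvalues in hand, for each distinct value $\lambda$ with $L(\lambda)=\{k:\lambda_k=\lambda\}$ I would recover $Q_\lambda(z_1,z_2):=\sum_{k\in L(\lambda)}\phi_k(z_1)\phi_k(z_2)$ by solving, over $t=1,\dots,m$ with $m$ the number of distinct eigenvalues, the linear system $Q^d(z_1,z_2,t)=\sum_\lambda(1-\lambda)^{t-1}Q_\lambda(z_1,z_2)$, whose coefficient matrix is Vandermonde in the distinct numbers $1-\lambda$ and hence invertible. From this point the argument is word-for-word the last part of the proof of Lemma~\ref{cont-time-heat}: each $Q_\lambda$ is a symmetric positive semidefinite $|B|\times|B|$ matrix, it factors as $AA^T$ with $A$ of rank $|L(\lambda)|$ uniquely up to an orthogonal matrix, and the columns of $A$ are the values on $B$ of a valid choice of orthonormalized eigenfunctions, i.e. a set of interior spectral data. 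The one genuinely new point compared with Lemmas~\ref{cont-time-heat} and \ref{cont-time-Schro}, and the place needing the most care, is exactly this passage from discrete power sums to the spectrum: one must rule out spurious cancellations among the geometric sequences $(1-\lambda_j)^{t}$ using positivity of the coefficients, and separately dispose of the degenerate ratio $1-\lambda_j=0$, which only affects the $t=1$ value. Everything else is a routine discretization of the continuous-time arguments.
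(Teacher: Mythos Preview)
Your proof is correct and follows essentially the same route as the paper: the paper also derives the discrete Duhamel formula in the eigenbasis, tests with the point source $\delta_0(s)\delta_{z_1}(z)/\mu_{z_1}$ to obtain $Q(z_1,z_2,t)=\sum_j(1-\lambda_j)^{t-1}\phi_j(z_1)\phi_j(z_2)$, passes to the Z-transform (your generating function under $w=\omega^{-1}$), reads off the eigenvalues $\neq1$ from the poles using the no-vanishing hypothesis, and handles the eigenvalue $1$ separately from $Q(\cdot,\cdot,1)$. The only cosmetic difference is that the paper extracts each $Q_j$ as a residue of the meromorphic continuation, whereas you solve a Vandermonde system in $t$; both yield the same matrices $Q_\lambda$ and the concluding $AA^T$ factorization argument is identical.
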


\begin{proof}
By \eqref{self-adjoint} and \eqref{eq-discrete-heat},
\begin{eqnarray*}
D_t \langle u^f(\cdot,t),\phi_j \rangle_{L^2(X)} &=& \langle (\Delta_X-q)u^f(\cdot,t),\phi_j \rangle_{L^2(X)}+\langle f(\cdot,t),\phi_j \rangle_{L^2(X)} \\
&=& -\lambda_j \langle u^f(\cdot,t),\phi_j \rangle_{L^2(X)} + \sum_{z\in B} \mu_z\phi_j(z) f(z,t).
\end{eqnarray*}
Thus for $t\in \Z_+$,
$$\langle u^f(\cdot,t),\phi_j \rangle_{L^2(G)} = \sum_{s=0}^{t-1} (1-\lambda_j)^{t-1-s}  \Big(\sum_{z\in B} \mu_z\phi_j(z) f(z,s) \Big).$$
%Note that the formula above takes $0^0=1$ into account. 
Hence,
\begin{equation}
(\Lambda^d f)(y,t)=u^f(y,t)=\sum_{j=1}^N \sum_{s=0}^{t-1} (1-\lambda_j)^{t-1-s}  \Big(\sum_{z\in B} \mu_{z}\phi_j(z) f(z,s) \Big) \phi_j(y), \quad y\in B,
\end{equation}
which shows that the interior spectral data determines $\Lambda^d$. To see the converse,
take $f(z,s)$ to be the form $\delta_{0}(s) \delta_{z_1}(z)/\mu_{z_1}$, $z_1\in B$, and thus we can determine 
$$Q(z_1,z_2,t)=\sum_{j=1}^N (1-\lambda_j)^{t-1} \phi_j(z_1) \phi_j(z_2),  \quad\forall z_1,z_2\in B,\,t\in \Z_+.$$
Taking the Z-transform (e.g. \cite[Chapter 6.2]{J}) of $Q(z_1,z_2,t)$ in the time domain $t\in \N$, it is possible to determine
$$\mathcal{Z}(Q)(z_1,z_2,\omega)=\sum_{j=1}^N \frac{1}{1-(1-\lambda_j)\omega^{-1}} \phi_j(z_1) \phi_j(z_2), \quad \omega\in \C,\, |\omega|>\max_j |1-\lambda_j|.$$
Similarly as before, we extend the function $\mathcal{Z}(Q)(z_1,z_2,\omega)$ to a meromorphic function on $\C$ by analytic continuation, and then it is possible to determine all eigenvalues $\lambda_j\neq 1$ and the boundary values of corresponding eigenfunctions up to orthogonal transformations.

Now we have already determined all eigenvalues not equal to $1$ and the boundary values of corresponding eigenfunctions. By comparing $Q(y,z,1)$ with $\sum_{\lambda_j\neq 1} \phi_j(y)\phi_j(z)$ for $y,z\in B$, we know whether there exists eigenvalue $1$,
%To determine the eigenspace w.r.t $\lambda_j=1$, we take $f(z,s)$ to be the form of $\delta_{s,t-1} \delta_{z z_1}/\mu_{z_1}$. Then we have the knowledge of $\sum_{j=1}^N \phi_j(z_1)\phi_j(z_2)$. 
%Since the boundary values of all other eigenfunctions have already been determined,
and also know $\sum_{\lambda_k=1}\phi_k(y)\phi_k(z)$. The latter determines the boundary values of the eigenfunctions corresponding to eigenvalue 1 up to an orthogonal transformation.
\end{proof}

Theorem \ref{Thm-equiv} directly follows from Lemmas \ref{cont-time-heat}, \ref{cont-time-Schro} and \ref{discrete-time-heat}.

\smallskip
\noindent {\bf An alternative formulation.} One can also consider the following alternative formulation for the discrete-time heat equation. For a fixed $y_0\in B$, consider
\begin{eqnarray}\label{eq-walk-data}
\begin{cases}
D_t u(x,t)-\Delta_X u(x,t)+q(x)u(x,t)=0, \quad\hbox{for  }(x,t)\in X \times \N,\\
u(x,t)|_{t=0}=\delta_{y_0}(x),  \quad\hbox{for }x\in X.\end{cases}
\end{eqnarray}
We denote the values on $B$ of the solution of the equation above by 
\begin{equation}\label{def-K}
K(y_0;z,t):=u(z,t),\quad z\in B.
\end{equation}

\begin{lemma}\label{lemma-walk-data}
Let $(X,E)$ be a finite weighted graph and $q: X\to \R$ be a potential function.
Then the collection of data $\{K(y;z,t):y,z\in B,t\in \N\}$ determines $\Lambda^d$.
\end{lemma}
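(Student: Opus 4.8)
The plan is to show that $\Lambda^d$ can be reconstructed from the data $\{K(y;z,t):y,z\in B,\,t\in\N\}$ by a superposition (Duhamel/discrete-convolution) argument, exploiting the linearity of the discrete-time heat equation \eqref{eq-discrete-heat} and the fact that a source supported on $B$ can be built up from time-shifted point masses $\delta_{y_0}$ at vertices of $B$. First I would record the discrete Duhamel formula: if $u^f$ solves \eqref{eq-discrete-heat} with source $f$ supported on $B\times\N$ and zero initial data, then
\begin{equation*}
u^f(x,t)=\sum_{s=0}^{t-1}\big(S_{t-1-s}f(\cdot,s)\big)(x),
\end{equation*}
where $S_r$ denotes the solution operator of the homogeneous equation \eqref{eq-heat-no-boundary} (with $q$) after $r$ time steps, i.e. $S_r w$ is the value at time $r$ of the solution with initial data $w$. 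This identity follows by induction on $t$ directly from \eqref{eq-discrete-derivative} and the equation, and needs no spectral information.

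Next I would observe that, since $\supp(f)\subset B\times\N$, for each fixed $s$ the function $f(\cdot,s)$ is a linear combination $f(\cdot,s)=\sum_{y\in B}f(y,s)\,\delta_y(\cdot)$ of the point masses at vertices of $B$. By linearity of $S_r$ this gives
\begin{equation*}
u^f(z,t)=\sum_{s=0}^{t-1}\sum_{y\in B} f(y,s)\,\big(S_{t-1-s}\delta_y\big)(z),\qquad z\in B .
\end{equation*}
Now by definition \eqref{def-K}, $K(y;z,r)=\big(S_r\delta_y\big)(z)$ for all $y,z\in B$ and $r\in\N$ — this is exactly the solution of \eqref{eq-walk-data} with $y_0=y$ evaluated at $(z,r)$. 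Hence
\begin{equation*}
(\Lambda^d f)(z,t)=u^f(z,t)=\sum_{s=0}^{t-1}\sum_{y\in B}K(y;z,t-1-s)\,f(y,s),\qquad z\in B,
\end{equation*}
which expresses $\Lambda^d f$ on $B\times\Z_+$ purely in terms of the data $K$ and the (known) source $f$. Therefore $\Lambda^d$ is determined by $\{K(y;z,t)\}$, as claimed.

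The only subtlety — and the step to state carefully rather than the true obstacle — is that $\Lambda^d$ by definition records $u^f$ only on $B\times\Z_+$, i.e. for $t\ge 1$, so the $t=0$ term never enters; and the outer sum runs only up to $s=t-1$, matching the support of $f$ in $\{0,\dots,t-1\}$ that actually influences $u^f(\cdot,t)$. One should also note that $K(y;z,r)$ is the full $B$-restriction of the homogeneous evolution and is defined for every $r\in\N$ including $r=0$, where it equals $\delta_y(z)$; this $r=0$ value does appear (at $s=t-1$) and is harmless. No positivity, no Two-Points Condition, and no unique-continuation hypothesis is needed here — the lemma is a pure linearity statement — so the proof is essentially the Duhamel identity together with the decomposition of $B$-supported sources into point masses.
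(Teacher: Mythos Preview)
Your proof is correct and follows essentially the same approach as the paper: both establish a discrete Duhamel identity expressing $u^f$ as a superposition of homogeneous solutions with initial data $f(\cdot,s)$, then decompose the $B$-supported source into point masses $\delta_y$ and identify the resulting terms with $K(y;z,\cdot)$. The only cosmetic difference is that you phrase the Duhamel step via the evolution operator $S_r$ (using time-translation invariance of the autonomous equation), whereas the paper writes it with solutions $u_{(s)}$ launched at varying initial times; after the change of index $s\mapsto s-1$ the two formulas coincide.
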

\begin{proof}
Recall that $u^f$ denotes the solution of the equation \eqref{eq-discrete-heat} with zero initial value and the non-homogeneous source $f:X\times \N \to \R$, $\supp(f) \subset B\times \N$.
\begin{equation} \label{eq-u0}
\begin{cases}
(D_t-\Delta_X+q)u^f (x,t)=f(x,t), \quad x\in X,\,t\in \N,\\
u^f(x,t)|_{t=0}=0.
\end{cases}
\end{equation}

We claim the following Duhamel's principle in our case: the solution $u^f$ of the non-homogeneous equation \eqref{eq-u0} is determined by the solutions of the following initial value problems for all $s\in \Z_+$:
\begin{equation}\label{eq-duhamel-s}
\begin{cases}
(D_t-\Delta_X+q)u (x,t)=0, \quad x\in X,\, t\geq s, \, t\in \N, \\
u(x,s)=f(x,s-1), \quad x\in X.
\end{cases}
\end{equation}
More precisely, denote by $u_{(s)}:X \times \{s,s+1,\cdots\} \to \R$ the solution of the equation \eqref{eq-duhamel-s} with the initial value at time $s\in \Z_+$. 
We claim that 
\begin{equation}\label{duhamel-principle}
u^f(x,t)=\sum_{s=1}^t u_{(s)}(x,t),\;\;x\in X,\, t\in \Z_+;\quad u^f(\cdot,0)=0.
\end{equation}
Let us assume the formula \eqref{duhamel-principle} is true for the moment before proving it a bit later.
Since ${\rm supp}(f)\subset B\times \N$, we can write $f$ as $f(x,t)=\sum_{y\in B} f(y,t) \delta_{y}(x)$. Hence the solution $u_{(s)}$ can be written as a linear combination of the data $\{K(y;\cdot,\cdot): y\in B\}$. Therefore the data $\{K(y;z,t):y,z\in B,t\in \N\}$ determines $u^f|_{B \times \Z_+}$ by the formula \eqref{duhamel-principle}.

At last, we prove the claim \eqref{duhamel-principle}.
This is due to the following identities:
\begin{eqnarray*}
D_t u^f(x,t)&=&\sum_{s=1}^{t+1} u_{(s)}(x,t+1)-\sum_{s=1}^{t} u_{(s)}(x,t) \\
&=& u_{(t+1)}(x,t+1)+\sum_{s=1}^{t}\big( u_{(s)}(x,t+1)-u_{(s)}(x,t)\big) \\
&=& f(x,t)+ \sum_{s=1}^{t} D_t u_{(s)}(x,t) \\
&=& f(x,t)+\sum_{s=1}^t (\Delta_X-q) u_{(s)}(x,t) = f(x,t)+ (\Delta_X-q) u^f (x,t).
\end{eqnarray*}
This shows that $u^f$ defined by \eqref{duhamel-principle} satisfies the first equation in \eqref{eq-u0}. Then the claim follows from the uniqueness of the solution of \eqref{eq-u0}.
\end{proof}

\section{Discrete-time random walk} \label{sec-walk}

Let $H_t^{x_0}$, $t\in \N$ be a discrete-time random walk (homogeneous Markov chain) on a finite graph $(X,E)$ starting from the vertex $x_0\in X$ at $t=0$ (i.e. $H_0^{x_0}=x_0$). Denote by $p_{xy}$ the conditional probability \eqref{pxy} of the state changing from $x$ to $y$, that is, $p_{xy}:=\mathbb{P}(H_{t+1}^{x_0}=y\, | \, H_{t}^{x_0}=x )$.
Assume that Condition \ref{condition-Markov} is satisfied.

Given a function $w:X\to \R$, we consider the expectation 
\begin{equation} \label{def-u-section4}
u(x,t):=\mathbb E(w(H_t^{x})).
\end{equation}
By definition,
$$u(x,t)=\sum_{z\in X} \mathbb{P}(H_{t}^x=z) w(z),$$
where $\mathbb{P}(H_{t}^x=z)$ is the probability that the random walk $H_t^x$ is $z$ at time $t$. It immediately follows that $u(x,0)=\mathbb E(w(H_0^{x}))=w(x)$. By the Markov property and Condition \ref{condition-Markov}(1), we have
\begin{eqnarray*}
u(x,t+1) &=& \sum_{z\in X} \mathbb{P}(H_{t+1}^x=z) w(z) \\
&=& \sum_{z\in X} \Big(\mathbb{P}(H_1^x=x)\cdot\mathbb{P}(H_{t}^x=z)+ \sum_{y\sim x, y\in X}\mathbb{P}(H_1^x=y)\cdot\mathbb{P}(H_{t}^y=z) \Big) w(z) \\
&=& p_{xx} u(x,t)+ \sum_{y\sim x, y\in X}p_{xy}u(y,t).
%&=& \frac{1}{\textrm{deg}(x)+1} u(x,t) + \frac{1}{\textrm{deg}(x)+1} \sum_{y\sim x,y\in X} u(y,t).
\end{eqnarray*}
Condition \ref{condition-Markov}(1) yields
\begin{equation} \label{sum1}
p_{xx}+\sum_{y\sim x, y\in X} p_{xy}=1,
\end{equation}
which gives
\begin{eqnarray} \label{eq-pxy-rough}
D_t u(x,t) = u(x,t+1)-u(x,t) = \sum_{y\sim x,y\in X} p_{xy} \big(u(y,t)-u(x,t) \big).
\end{eqnarray}

Assuming $p_{xy}=c_{xy}/m(x)$ as in Condition \ref{condition-Markov}(2), the right-hand side of the equation \eqref{eq-pxy-rough} fits into our definition of the graph Laplacian \eqref{Laplaciandef_int} with the choice of weights
\begin{equation} \label{weights-walk}
\mu_x=m(x),\quad g_{xy}=c_{xy}.
\end{equation}
This shows that $u(x,t)$ given by \eqref{def-u-section4} satisfies the discrete-time heat equation, or a Feynman-Kac type formula:
\begin{eqnarray}\label{eq-walk}
\begin{cases} D_t u(x,t)-\Delta_X u(x,t)=0, \quad\hbox{for  }(x,t)\in X \times \N, \\
u(x,t)|_{t=0}=w(x), \quad\hbox{for  }x\in X, \end{cases}
\end{eqnarray}
with the choice of weights \eqref{weights-walk}.

\medskip
Now let us prove Theorems \ref{prop-hitting times walk} and \ref{prop-walk}.

\begin{proof}[Proof of Theorem \ref{prop-hitting times walk}]

Let us assume that $r(t,x,y):=\Prob\{\tau^+(x,y)=t\}$ are known for all $x,y\in B$.
We see that if $H_{T}^{x}=y$, then there are times $1\le t_1\le\dots\le t_j=T$
such that $H_{t}^{x}$ arrives first time at $y$ at time $t_1$, and all
subsequent times when it arrives at $y$ are $ t_2,t_3,\dots,t_j$ with $t_j=T$.
Using this, we see that
\beq\label{moving probabilities}
\Prob(H_{T}^{x}=y)&=&\sum_{j=1}^T \sum_{1\le t_1\le\dots\le t_j=T}
\Prob\bigg(\{t:\ H_{t}^{x}=y,\ t\leq T\}=\{t_1,t_2,\dots,t_j\}\bigg)
\\ \nonumber
&&\hspace{-10mm}=r(T,x,y)+\bigg(\sum_{j=2}^T \sum_{1\le t_1\le\dots\le t_j=T}
r(t_1,x,y)\cdot \big(\prod_{i=2}^{j} r(t_i-t_{i-1},y,y)\big)\bigg).
\eeq
%where the product is defined to be one when $j=1$.
Thus we see that the given data uniquely determine 
\beq\label{new data}
\hbox{the set $B$ and the probabilities $\mathbb{P}(H_{T}^{y}=z)$
 for all $y,z\in B$ and $T\in \N$.}
\eeq
Next we assume that we are given the data \eqref{new data} and we will show that
these data uniquely determine
the graph $(X,E)$ (up to an isometry) and the probabilities $p_{xy}$ for all $x,y\in X$.

We can now modify the random walk so that it is guaranteed to be aperiodic. Let $\tilde H_{t}^x$ be another random walk
defined via formula \eqref{probabilities in c},
that is, the transition probabilities  of $\tilde H_{t}^x$ are given by
\beq\label{probabilities in c2}\tilde p_{xy}=\frac {\tilde c_{xy}}{\tilde m(x)},\quad \tilde m(x)={\sum_{z\sim x\hbox{ \tiny or } z=x} \tilde c_{xz}},
\eeq
where  %$\tilde c_{xy}$  such that
\beq\label{tilde c1}
& &\tilde c_{xy}= c_{xy},\quad \hbox{if }x\not =y,\\
& &\tilde c_{xx}= c_{xx}+m(x),\nonumber
\eeq
where $c_{xy}$  and $m(x)$  are parameters in formula \eqref{probabilities in c} for 
$H_{t}^x$. Observe that $ \tilde m(x)=2m(x)$. Roughly speaking, this means that at every time $t$, the random walk $\tilde H_{t}^x$ moves
with probability $\frac 12$ as $H_{t}^x$ and with probability $\frac 12$ stays at the vertex where it was.
To analyze the modified random walk, let $Y_t\in\{0,1\}$ be independent, identically distributed Bernoulli random variables that have the value 1 with probability $\frac 12$. An equivalent way to define $\tilde H_{t}^x$ is to  set that 
if $Y_t=1$ then  $\tilde H_{t}^x$ stays at the vertex  $\tilde H_{t-1}^x$;
if $Y_t=0$ then $\tilde H_{t}^x$ moves from the vertex $z=\tilde H_{t-1}^x$ in the same way as the random walk $H_{t}^x$ would move from
the vertex $z$.

We see that the probability distributions of  the arrival times $\tilde r(t,x,y)$ for the random walk $\tilde H_{t}^x$ are
\beq
& &\tilde  r(t,x,y)=\sum_{n=0}^{t-1} 
\Prob\big(\{\sum_{j=1}^{t-1} Y_j=n\hbox{ and }Y_t=0\} \big)\cdot \Prob \big(\{\tau^+(x,y)=t-n\} \big)
,\quad \hbox{if }x\not =y,\\
& &\tilde  r(t,x,x)=
\Prob(\{Y_1=1\})+
\Prob(\{Y_1=0\})\cdot \Prob(H^x_1=x)
 ,\quad \hbox{if }t=1,
 \eeq
 and for $t\ge 2$,
\ba
& &\tilde  r(t,x,x)=\sum_{n=0}^{t-2}
\Prob(\{Y_1=0\})\cdot \Prob \big(\{
\sum_{j=2}^{t-1} Y_j=n
\hbox{ and }Y_t=0\} \big)\cdot \Prob \big(\{\tau^+(x,x)=t-n\} \big).
%& &\tilde  r(t,x,x)=\frac 12+\frac 12\sum_{n=0}^{t-2} q(t-1,n)r(t-n,x,x) ,\quad \hbox{if }x\not =y,\ t\ge 2,
\ea
In other words,
\beq
& &\tilde  r(t,x,y)=\sum_{n=0}^{t-1} q(t,n)r(t-n,x,y),\quad \hbox{if }x\not =y,\\
& &\tilde  r(t,x,x)=\frac 12+\frac 12r(1,x,x) ,\quad \hbox{if } t=1,\\
& &\tilde  r(t,x,x)=\sum_{n=0}^{t-2} \frac 12\cdot q(t-1,n)r(t-n,x,x) ,\quad \hbox{if } t\ge 2,
\eeq
where % $q(t)$  are explicit number defined by
\beq
q(t,n)=\Prob \big(\{\sum_{j=1}^{t-1} Y_j=n\hbox{ and }Y_t=0\} \big)= \binom{t-1}{n}2^{-t}.
\eeq
% that we consider 
% and are independent
%of $H_{t}^x$ such that if at time $t$. Then we define $\tilde H_{t}^x$ so that have $\tilde H_{t}^x=z$ and $Y_t=0$ then
Thus, using the formula \eqref{moving probabilities}, we see that  $r(t,x,y)$, $x,y\in B$
determine $\Prob(\tilde H_{t}^{x}=y)$ and
$\tilde p_{xy}=\Prob(\tilde H_{1}^{x}=y)$ for all $x,y\in B$. 

Observe that the random walk $\tilde H_{t}^{x}$ is irreducible, aperiodic
and recurrent. Thus by \cite[Theorem 7.18]{Kallenberg}, the random walk $\tilde H_{t}^{x}$ has
a unique stationary state $(\tilde \pi_y)_{y\in X}$ and 
\beq
\tilde \pi_y=\lim_{t\to \infty} \Prob(\tilde H_{t}^{x}=y).
\eeq
Moreover, we see that
\beq\label{s-formula}
s_y:=\frac {\tilde m(y)}{\sum_{z\in X} \tilde m(z)}
\eeq
satisfies
\beq
\sum_{y\in X} \tilde p_{yx}s_y=
\sum_{y\in X} \frac{\tilde c_{yx}}{\tilde m(y)}
\frac {\tilde m(y)}{\sum_{z\in X} \tilde m(z)}
=
\frac{\sum_{y\in X} \tilde c_{yx}}{\sum_{z\in X} \tilde m(z)}
=
\frac {\tilde m(x)}{\sum_{z\in X} \tilde m(z)}=
s_x,
\eeq
and hence $(s_y)_{y\in X}$
is a stationary state of the random walk $\tilde H_{t}^{x}$. As the stationary state is unique,
we see that $s_y= \tilde \pi_y$.
This implies that $  r(t,x,y)$, $x,y\in B$
determine $(s_y)_{y\in B}$.

Observe that formula \eqref{s-formula} implies
for any $x,y\in B$,
\beq\label{ratio of ms}
\frac {\tilde m(x)}
 {\tilde m(y)}=\frac {s_x}{s_y}=\frac {\tilde \pi_x}{\tilde \pi_y}
 \eeq
 can be determined when $B$ and  $  r(t,x,y)$, $x,y\in B$ are given.
To normalize the parameters, let
$A_0=\frac 1{\tilde m(x_0)}>0$ with some $x_0\in B$. Then
 the formula \eqref{ratio of ms} determines
 $A_0 {\tilde m(x)}$ for all $x\in B$.
Therefore from \eqref{probabilities in c2}, we can determine the weights 
 $\tilde c_{xy}$ for $x,y\in B$, $x\sim y$, up to a multiple of constant, i.e. the numbers
 \beq\label{weights in c2}
{A_0}\tilde c_{xy}=
 \tilde p_{xy}\cdot 
A_0 {\tilde m(x)}.
\eeq
 Finally, using formulae \eqref{tilde c1},
 we can determine the numbers ${A_0}c_{xy},\, A_0 m(x)$  for all $x,y\in B$.
 
\medskip
Now we show that the knowledge of $r(t,x,y)$, $A_0 m(x)$ for all $x,y\in B,\, t\in \Z_+$ determines the graph structure and the transition matrix of the random walk.
Recall that the expectation $u(x,t)=\mathbb E(w(H_t^{x}))$ satisfies the discrete-time heat equation \eqref{eq-walk}. Since we only know the weights $\mu,g$ in \eqref{weights-walk} up to a gauge $A_0>0$, we consider a new choice of weights in \eqref{eq-walk}:
\begin{equation}\label{weights-walk-gauge}
\tilde{\mu}_x=A_0 m(x),\quad \tilde{g}_{xy}=A_0 c_{xy}.
\end{equation}
Note that the gauge transformation $(\mu,g) \mapsto (\tilde{\mu},\tilde{g})$ does not change the graph Laplacian \eqref{Laplaciandef_int} or the equation \eqref{eq-walk}.

When we take the initial condition $w(x)=\delta_{y}(x)$ in the equation \eqref{eq-walk}, we see that 
the solution 
$u(x_0,t)$ of  \eqref{eq-walk} is 
\ba
u(x_0,t)=\mathbb E(\delta_{y}(H_{t}^{x_0})) =\mathbb{P}(H_{t}^{x_0}=y).
\ea 
Varying $x_0,y\in B,\,t \in \Z_+$ shows that by knowing $r(t,x,y)$ for all $x,y\in B,\, t\in \Z_+$, the formula \eqref{moving probabilities} determines the values $u|_{B\times \N}$ for the equation \eqref{eq-walk} with initial values of the form $w(x)=\delta_{y}(x)$ for any $y\in B$. Then Lemma \ref{lemma-walk-data} reduces the inverse problem for random walk to the inverse problem (iv). If the Two-Points Condition is further assumed, then the unique continuation for eigenfunctions holds due to Proposition \ref{prop-uc}. Hence the inverse problem for random walk is reduced to the inverse interior spectral problem (i) by Theorem \ref{Thm-equiv}, considering that $\tilde{\mu}|_B=A_0 m|_B$ is known. Then Theorem \ref{IISP}(2) enables us to recover the graph $(X,E)$ and weights $\tilde{\mu},\tilde{g}$ in \eqref{weights-walk-gauge}, and consequently recover 
$$A_0 c_{xx}=A_0 m(x)-\sum_{y\sim x} A_0 c_{xy}=\tilde{\mu}_x-\sum_{y\sim x}\tilde{g}_{xy}.$$
%$$p_{xy}=\frac{c_{xy}}{m(x)}=\frac{\tilde{g}_{xy}}{\tilde{\mu}_x}$$ 
At last, the probabilities $p_{xy}$ for all $x,y\in X$ are uniquely determined by formula \eqref{probabilities in c}.
\end{proof}

\smallskip
\begin{proof}[Proof of Theorem \ref{prop-walk}]
Suppose we are able to observe the random walk on a subset $B\subset X$ of vertices. We consider  a single  realization of the random walk $H_t^{x_0},\, t\in \N$, that is, one sample of the random walk process. Assume the initial position $x_0$ is unknown. Let $y,z\in B$ and $T\in \Z_+$. 

Let 
\ba
\tau_1(x_0,y)=\inf \big\{t\ge 1: \ H_t^{x_0}=y \big\}
\ea  
be the first passing time of the random walk $H_t^{x_0}$  through the point $y$. Moreover, we define
recursively the random variables
\ba
\tau_n(x_0,y)=\inf \big\{t>\tau_{n-1}(x_0,y): \ H_t^{x_0}=y \big\},\quad n=2,3,\dots,
\ea 
to
be the $n$:th  passing time of the random walk $H_t^{x_0}$ through the point $y$.
As $X$  is finite and connected, we have $\tau_n(x_0,y)<\infty$ almost surely for any $n$.

\smallskip
 Let 
 $n\ge 1$  and denote $\tau_n=\tau_n(x_0,y)$ so that $H_{\tau_n}^{x_0}=y$. 
Note that each $\tau_n$ is a stopping time with respect to the random walk $H_t^{x_0},\,t\in \N$.
By the strong Markov property of random walks, see
\cite[Theorem 6.2.24]{B} (or \cite[Theorem 1.4.2]{Norris}), it holds for the stopping time
 $\tau_n$ that 
 the random variables $H_{\tau_n+t}^{x_0}$, $t\ge 1$ are independent of the
random variables $H_{t}^{x_0}$, $t=1,2,\dots,\tau_n-1$.
Moreover, the random process $t\to H_{\tau_n+t}^{x_0}$ has the same distribution
as the random walk $t\to H_{t}^{y}$. 
In particular, as $\tau_{_{2T(n+1)}}> \tau_{_{2Tn}}+T$, this implies that the events
\beq
\mathcal{A}_{n}:=\big\{H_{\tau_{_{2Tn}}+T}^{x_0}=z \big\},\quad n=1,2,\dots,
\eeq
are independent, and $\Prob(\mathcal{A}_{n})=\Prob(H_T^y=z)$ for any $n$.
Let ${\bf 1}_{\mathcal A}$ denote the indicator function of an event $\mathcal A$. 
Then the random variables ${\bf 1}_{\mathcal{A}_{n}}$, $n\in \Z_+$ are independent.
Thus the strong law of large numbers, see e.g.
\cite[Theorem 3.23]{Kallenberg}, implies that we have almost surely
\beq\label{walk-prob}
\mathbb{P}(H_{T}^{y}=z)
=\lim_{N\to\infty}
\frac 1{N}\sum_{n=1}^N {\bf 1}_{\mathcal{A}_{n}}.
\eeq
The right-hand side of \eqref{walk-prob} is determined by
the random process $\hat H_t^{x_0}$ given in \eqref{process hat H}.

 Summarizing, we have shown that  the given data determine the probabilities $\mathbb{P}(H_{T}^{y}=z)$
 for all $y,z\in B$ and $T\in \N$. As $B$ is included in the given data, we have 
determined the data \eqref{new data}. As shown in the proof of Theorem \ref{prop-hitting times walk},
the data \eqref{new data} uniquely determine
the graph $(X,E)$ (up to an isometry) and the probabilities $p_{xy}$ for all $x,y\in X$.
%The last part can be justified in either of the following two ways. (4.6) shows that the information $\mathbb{P}(H_{T}^{y}=z)$ determine $r(t,y,z)$ by induction (in fact, these two data are equivalent), and then apply Theorem 1.5. Alternatively, one can write down $\mathbb{P}(\tilde{H}_{T}^{y}=z)$ in terms of $\mathbb{P}(H_{T}^{y}=z)$ in a similar way as (4.12)-(4.14), where $\tilde{H}$ denotes the modified aperidoic random walk given by (4.8)-(4.9). Then repeat the rest of the proof of Theorem 1.5: after (4.16).
\end{proof}

\section{Applications of the unique continuation} \label{section-uc}

Let $(X,E)$ be a finite weighted graph and $B\subset X$ be a subset of vertices. Let $q:X\to \R$ be a potential function. Assume the graph satisfies the unique continuation for eigenfunctions of the Schr\"odinger operator $-\Delta_X+q$, that is, there does not exist a nonzero eigenfunction vanishing identically on $B$. We will apply the unique continuation assumption to study the observability and controllability at $B$ and prove Theorem \ref{Thm-observe-control}.

\subsection{Observability} \hfill
 
\smallskip 
Let $\Phi_w$  be the solution of the following continuous-time heat equation
\begin{eqnarray}
& &\frac \p{\p t} \Phi(x,t)-\Delta_X \Phi(x,t)+q(x) \Phi(x,t)=0, \quad\hbox{for  }(x,t)\in X \times \R_{\geq 0}, \label{eq-cont-heat-initial} \\
& &\Phi(x,t)|_{t=0}=w(x), \quad\hbox{for  }x\in X, \label{eq-cont-heat-initial-condition}
\end{eqnarray}
where the initial data $w(x)$ is unknown. The following lemma shows that if we know the interior spectral data,
it is possible to determine the initial condition of a  state from the measurement of the solution on $B$.

\begin{lemma}\label{observe-cont-heat}
Let $(X,E)$ be a finite weighted graph and $B\subset X$.
Assume that there does not exist a nonzero eigenfunction, of the Schr\"odinger operator $-\Delta_X+q$, vanishing identically on $B$. Suppose we are given the interior spectral data $\big(\lambda_j,\phi_j|_B\big)_{j=1}^{|X|}$. Then the measurement $\Phi_w|_{B\times \R_{+}}$ determines $\langle w,\phi_j \rangle_{L^2(X)}$ for all $j=1,\cdots,|X|$.
\end{lemma}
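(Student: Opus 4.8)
The plan is to combine the spectral representation of the heat semigroup with a linear-algebra step in which the unique continuation hypothesis furnishes the needed injectivity. Writing the unknown initial datum in the eigenbasis, $w=\sum_{j=1}^{|X|}\langle w,\phi_j\rangle_{L^2(X)}\phi_j$, separation of variables gives
\begin{equation*}
\Phi_w(x,t)=\sum_{j=1}^{|X|} e^{-\la_j t}\,\langle w,\phi_j\rangle_{L^2(X)}\,\phi_j(x),\qquad (x,t)\in X\times\R_{\geq 0},
\end{equation*}
and in particular $\Phi_w(z,t)=\sum_{j=1}^{|X|} e^{-\la_j t}\langle w,\phi_j\rangle_{L^2(X)}\phi_j(z)$ for $z\in B$.

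First I would extract, from the measurement $\Phi_w|_{B\times\R_+}$, the sums of the unknown coefficients over each eigenspace. Since the $\la_j$ belong to the interior spectral data, the function $t\mapsto\Phi_w(z,t)$ is a finite exponential sum with \emph{known} exponents; taking its Laplace transform in $t$ (exactly as in the proof of Lemma \ref{cont-time-heat}) produces a meromorphic function of $\omega$ whose residue at $\omega=-\la_j$ equals
\begin{equation*}
c_j(z):=\sum_{k\in L_j}\langle w,\phi_k\rangle_{L^2(X)}\,\phi_k(z),\qquad L_j:=\{k:\la_k=\la_j\}.
\end{equation*}
Thus $c_j(z)$ is determined for all $z\in B$ and all $j$ (equivalently, one may simply evaluate $\Phi_w(z,\cdot)$ at $|X|$ distinct times and invert a Vandermonde-type system).

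Finally I would fix an eigenvalue $\la_j$ and regard $a_k:=\langle w,\phi_k\rangle_{L^2(X)}$, $k\in L_j$, as unknowns; they solve the linear system $\sum_{k\in L_j}a_k\phi_k(z)=c_j(z)$, $z\in B$, whose coefficients $\phi_k(z)$ are known. The true coefficient vector is a solution, and there is no other: if $(a_k)_{k\in L_j}$ solves the associated homogeneous system, then $\psi:=\sum_{k\in L_j}a_k\phi_k$ is an eigenfunction of $-\Delta_X+q$ (with eigenvalue $\la_j$) vanishing identically on $B$, hence $\psi\equiv 0$ by the unique continuation hypothesis, and since $\{\phi_k\}_{k\in L_j}$ is orthonormal this forces $a_k=0$ for all $k\in L_j$. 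Therefore the system determines $\langle w,\phi_k\rangle_{L^2(X)}$ for each $k\in L_j$, and letting $j$ range over all eigenvalues recovers $\langle w,\phi_j\rangle_{L^2(X)}$ for all $j$, as claimed. The content is concentrated in this last step — the unique continuation assumption is precisely what prevents information loss inside a degenerate eigenspace — and I do not anticipate any further difficulty, the extraction of $c_j(z)$ being routine and already carried out in Lemma \ref{cont-time-heat}.
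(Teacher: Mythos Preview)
Your argument is correct. The decisive step---using the unique continuation hypothesis to conclude that $\{\phi_k|_B\}_{k\in L_j}$ is linearly independent, and hence that the linear system $\sum_{k\in L_j}a_k\phi_k(z)=c_j(z)$, $z\in B$, has a unique solution---is exactly what the paper does.

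The only difference is in how the eigenspace sums $c_j(z)=\sum_{k\in L_j}\widehat w_k\phi_k(z)$ are extracted from the measurement. You take a Laplace transform (or invert a Vandermonde system) to obtain all the $c_j(z)$ at once. The paper instead uses an asymptotic peeling argument: with the distinct eigenvalues ordered increasingly, it computes inductively
\[
b_J(z)=\lim_{t\to+\infty} e^{\lambda_J t}\Big(\Phi_w(z,t)-\sum_{j<J}e^{-\lambda_j t}\widehat w_j\phi_j(z)\Big)=\sum_{k\in L_J}\widehat w_k\phi_k(z),
\]
stripping off one eigenspace at a time. Both methods are standard ways to decompose a finite exponential sum with known exponents, and neither offers a real advantage here; your Laplace/Vandermonde route is perhaps more uniform (and you have already seen it in Lemma~\ref{cont-time-heat}), while the paper's limit argument makes the role of the ordering of eigenvalues explicit and avoids any complex-analytic machinery.
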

\begin{proof}
The solution of the equation \eqref{eq-cont-heat-initial} with the initial condition \eqref{eq-cont-heat-initial-condition} can be written as 
$$
\Phi_w(x,t)=\sum_{j=1}^{N} e^{-\lambda_j t} \widehat{w}_j \phi_j(x),
$$
where $N=|X|$, and
$$
 \widehat w_j=\langle w,\phi_j\rangle_{L^2(X)}=\sum_{x\in X}w(x)\phi_j(x) .
$$

For simplicity, assume that all eigenvalues have multiplicity one. Suppose we have ordered the eigenvalues such that $\lambda_j<\lambda_{j+1}$ for all $j$. Suppose we have found $\widehat w_j$  for $j=1,2,\dots,J-1$. Using the values of $\Phi_w$ on $B$, we can compute the following limit at any $z\in B$,
\begin{eqnarray*}
b_J(z):=\lim_{t\to+\infty}  e^{\lambda_{J} t} 
\bigg(\Phi_w(z,t)-\sum_{j=1}^{J-1} e^{-\lambda_j t} \widehat w_j \phi_j(z)\bigg).
%& =& \widehat w_J \phi_J(z).
\end{eqnarray*}
On the other hand,
$$b_J(z)=\lim_{t\to+\infty} \sum_{j=J}^{N} e^{(\lambda_J-\lambda_j) t} \widehat w_j \phi_j(z) =\widehat w_J \phi_J(z).$$
As each $\phi_j$ does not vanish identically on $B$ by assumption,
this shows we can find $ \widehat w_J$. By induction, we can find 
$ \widehat w_j$  for all $j=1,2,\cdots,N$.

For the case where eigenvalues may have multiplicity larger than one.
The above procedure determines
$$
b_J(z)=\sum_{k\in L_J} \widehat{w}_k \phi_k(z), \quad \forall z\in B,
$$ 
where $L_J=\{k:\lambda_k=\lambda_J\}$.
The unique continuation for eigenfunctions implies that $\{\phi_k|_{B}\}_{k\in L_J}$ are linear independent. Thus the quantities $\big(b_J(z)\big)_{z\in B}$ uniquely determine the coefficients $\widehat w_k$ for all $k\in L_J$.
%and hence the initial data
%$$w(x)=\sum_{j=1}^{N}  \widehat w_j \phi_j(x), \quad x\in X.$$ 
\end{proof}

Analogously, one can also consider the observability for the discrete-time heat equation.
Denote by $U_w$ the solution of the discrete-time heat equation
\begin{eqnarray}
& &D_t U(x,t)-\Delta_X U(x,t)+q(x) U(x,t)=0, \quad\hbox{for  }(x,t)\in X \times \N, \label{eq-discrete-heat-initial} \\
& &U(x,t)|_{t=0}=w(x), \quad\hbox{for  }x\in X, \label{eq-discrete-heat-initial-condition}
\end{eqnarray}
where the initial data $w(x)$ is unknown.

\begin{lemma}\label{observe-discrete-heat}
Let $(X,E)$ be a finite weighted graph and $B\subset X$.
Assume that there does not exist a nonzero eigenfunction, of the Schr\"odinger operator $-\Delta_X+q$, vanishing identically on $B$. Suppose we are given the interior spectral data $\big(\lambda_j,\phi_j|_B\big)_{j=1}^{|X|}$. Then the measurement $U_w|_{B\times \N}$ determines $\langle w,\phi_j \rangle_{L^2(X)}$ for all $j=1,\cdots,|X|$.
\end{lemma}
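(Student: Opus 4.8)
\medskip
\noindent\textbf{Proof proposal.} The plan is to imitate the proof of Lemma~\ref{observe-cont-heat}, but with the analytic ``rescale and let $t\to\infty$'' step replaced by the algebraic separation of geometric sequences already carried out in the proof of Lemma~\ref{discrete-time-heat}. Set $N=|X|$ and $\widehat w_j:=\langle w,\phi_j\rangle_{L^2(X)}$. First, I would write the solution of \eqref{eq-discrete-heat-initial}--\eqref{eq-discrete-heat-initial-condition} in the eigenbasis: using $D_t\big[(1-\lambda_j)^t\big]=-\lambda_j(1-\lambda_j)^t$ together with $(-\Delta_X+q)\phi_j=\lambda_j\phi_j$, one verifies that
\begin{equation*}
U_w(x,t)=\sum_{j=1}^{N}(1-\lambda_j)^t\,\widehat w_j\,\phi_j(x)
\end{equation*}
solves the equation and has the correct value at $t=0$, and by uniqueness of the forward solution this representation is exact. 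Restricting to $z\in B$, the data $U_w|_{B\times\N}$ is the family of sequences $t\mapsto U_w(z,t)=\sum_{j=1}^N(1-\lambda_j)^t\widehat w_j\phi_j(z)$, in which all the $\lambda_j$ and all the values $\phi_j(z)$ are known from the interior spectral data; only the numbers $\widehat w_j$ are unknown.

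Second, I would recover for each $z\in B$ the ``spectral components'' $c_j(z):=\sum_{k\in L_j}\widehat w_k\phi_k(z)$, where $L_j=\{k:\lambda_k=\lambda_j\}$. Grouping terms by the value of $\lambda_j$, the sequence $U_w(z,\cdot)$ is a linear combination of the geometric sequences $t\mapsto(1-\lambda)^t$ over the distinct eigenvalues $\lambda$, with coefficient $c_\lambda(z)$. Because distinct bases give linearly independent geometric sequences (the relevant Vandermonde matrix is invertible, using the convention $(1-\lambda)^0=1$, consistent with $U_w(z,0)=w(z)=\sum_j\widehat w_j\phi_j(z)$), and because the bases $1-\lambda$ are known from the spectral data, finitely many measured values of $U_w(z,\cdot)$ determine all the $c_\lambda(z)$. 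Equivalently one may take the $\mathcal Z$-transform of $U_w(z,\cdot)$ exactly as in the proof of Lemma~\ref{discrete-time-heat}, obtaining a rational function of $\omega$ whose poles lie at the known points $1-\lambda_j\neq 0$ with residues proportional to $c_j(z)$, while a possible eigenvalue $\lambda_j=1$ produces no pole but is read off from $U_w(z,0)=w(z)$ after subtracting the contributions of the other modes.

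Third, I would invert the map $(\widehat w_k)_{k\in L_j}\mapsto\big(c_j(z)\big)_{z\in B}$. For fixed $\lambda_j$, the identity $c_j(z)=\sum_{k\in L_j}\widehat w_k\phi_k(z)$, $z\in B$, is a linear system with coefficient matrix $\big(\phi_k(z)\big)_{z\in B,\,k\in L_j}$. The unique continuation hypothesis forbids any nonzero eigenfunction of $-\Delta_X+q$ from vanishing identically on $B$; since $\{\phi_k\}_{k\in L_j}$ spans the eigenspace for $\lambda_j$, a nontrivial vanishing combination of the $\phi_k|_B$ would produce such an eigenfunction. Hence $\{\phi_k|_B\}_{k\in L_j}$ is linearly independent, the matrix has full column rank $|L_j|$, and the system is uniquely solvable, giving $\widehat w_k$ for all $k\in L_j$. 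Running over all $\lambda_j$ recovers $\langle w,\phi_j\rangle_{L^2(X)}$ for every $j$, which is the assertion.

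The hard part --- and the only genuine difference from Lemma~\ref{observe-cont-heat} --- is the second step: in discrete time $(1-\lambda_j)^t$ is not monotone in $t$, distinct eigenvalues can give bases of equal modulus, and $\lambda_j=1$ gives base $0$, so one cannot isolate a single mode by multiplying by $e^{\lambda_J t}$ and passing to the limit as in the continuous case. The substitute is the grouping-by-distinct-base argument (equivalently, the $\mathcal Z$-transform computation imported from Lemma~\ref{discrete-time-heat}); once the $c_j(z)$ are extracted, the remaining inversion is the same as in the continuous case and rests on the same unique continuation input.
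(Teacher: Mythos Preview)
Your proof is correct, but the paper actually takes a different route for the crucial second step than the one you chose. You replace the ``rescale and let $t\to\infty$'' argument by the algebraic Vandermonde/$\mathcal Z$-transform separation borrowed from Lemma~\ref{discrete-time-heat}. The paper, by contrast, \emph{does} carry out a genuine rescale-and-limit argument in discrete time: it orders the eigenvalues so that $|1-\lambda_j|$ is nonincreasing, defines $c_J(z,t)=(1-\lambda_J)^{-t}\big(U_w(z,t)-\sum_{j<J}(1-\lambda_j)^t\widehat w_j\phi_j(z)\big)$, and then takes the limit of $c_J(z,t)+c_J(z,t+1)$ as $t\to\infty$. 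The averaging over two consecutive times is exactly the device that kills the oscillation coming from a possible companion eigenvalue with $1-\lambda_j=-(1-\lambda_J)$, which is the obstruction you correctly identified. The eigenvalue $\lambda=1$ is then treated separately at the end, as you also do.

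Your algebraic approach is arguably cleaner and more uniform: a single Vandermonde inversion using the values $t=0,1,\dots$ handles all distinct bases (including base $0$ via $0^0=1$) in one stroke, and it reuses machinery already present in the paper. The paper's approach, on the other hand, keeps the argument structurally parallel to the continuous-time Lemma~\ref{observe-cont-heat}, at the cost of the extra $c_J(z,t)+c_J(z,t+1)$ trick and a separate treatment of $\lambda=1$. The third step---using unique continuation to invert the linear system for each eigenspace---is identical in both proofs.
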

%Note that here the measurement at $t=0$ is necessary to determine the coefficients corresponding to eigenvalue 1, while this is not necessary for the continuous-time heat equation.

\begin{proof}
The solution of the equation \eqref{eq-discrete-heat-initial} with the initial condition \eqref{eq-discrete-heat-initial-condition} can be written as
$$
U_w(x,t)=\sum_{j=1}^{N} (1-\lambda_j)^t \widehat{w}_j \phi_j(x),
$$
where $N=|X|$ and $\widehat w_j=\langle w,\phi_j\rangle_{L^2(X)}$.

Assume that all eigenvalues have multiplicity one. Suppose the eigenvalues are ordered such that $|1-\lambda_j|\geq |1-\lambda_{j+1}|$ for all $j$. Denote $N_1=\max\{j: \lambda_j \neq 1\}$.
Suppose we have found $\widehat w_j$ for $j=1,2,\dots,J-1$ when $J\leq N_1$. 
Using the values of $U_w$ on $B$, we can evaluate the following quantity at any $z\in B$ and $t\in \N$,
$$c_J(z,t):=(1-\lambda_J)^{-t} \bigg(U_w(z,t)-\sum_{j=1}^{J-1} (1-\lambda_j)^t \widehat w_j \phi_j(z)\bigg).$$
On the other hand,
\begin{eqnarray*}
\lim_{t\to +\infty} \big( c_J(z,t)+c_J(z,t+1)\big) &=& \lim_{t\to +\infty} \bigg(\sum_{j=J}^{N} \big(\frac{1-\lambda_j}{1-\lambda_J} \big)^t\widehat w_j \phi_j(z) + \sum_{j=J}^{N} \big(\frac{1-\lambda_j}{1-\lambda_J} \big)^{t+1} \widehat w_j \phi_j(z) \bigg) \\
&=& 2\widehat w_J \phi_J(z) + \lim_{t\to +\infty} \sum_{1-\lambda_j=\lambda_J-1} \big((-1)^{t}+(-1)^{t+1}\big)\widehat w_j \phi_j(z) \\
&& + \lim_{t\to +\infty} \sum_{\substack{|1-\lambda_j|\neq|1-\lambda_J| \\ j\geq J+1}} \Big( \big(\frac{1-\lambda_j}{1-\lambda_J} \big)^t+\big(\frac{1-\lambda_j}{1-\lambda_J} \big)^{t+1} \Big)\widehat w_j \phi_j(z).
\end{eqnarray*}
Due to our ordering of the eigenvalues, the last sum tends to $0$. Hence we get
$$\lim_{t\to +\infty} \big( c_J(z,t)+c_J(z,t+1)\big)=2\widehat w_J \phi_J(z).$$
As each $\phi_j$ does not vanish identically on $B$ by assumption,
this shows we can find $ \widehat w_J$. By induction, we can find 
$ \widehat w_j$  for all $j=1,2,\cdots,N_1$. 

If there are eigenvalues with multiplicity larger than one,
the procedure above determines $\sum_{k\in L_J} \widehat{w}_k \phi_k(z)$, where $L_J=\{k:\lambda_k=\lambda_J\}$. Then the same argument as the last part of Lemma \ref{observe-cont-heat} makes it possible to determine the coefficients $\widehat{w}_k$.

It remains to determine the coefficients corresponding to eigenvalue $1$. Since we have already determined all the coefficients $\widehat{w}_j$ corresponding to eigenvalues not equal to $1$, we have the knowledge of the following quantity
$$U_w(z,0)-\sum_{\lambda_j\neq 1} \widehat{w}_j \phi_j(z)=\sum_{\lambda_j= 1} \widehat{w}_j \phi_j(z),\quad \forall z\in B.$$
In the same way, the unique continuation for eigenfunctions makes it possible to determine all the coefficients $\widehat{w}_j$ corresponding to eigenvalue $1$.
\end{proof}

\smallskip
\subsection{Controllability} \hfill

\medskip
Let $U^f: X\times \{0,1,\cdots,T\}\to \R$ be the solution of the following non-homogeneous discrete-time heat equation up to time $T$,
\begin{eqnarray}
& &D_t U(x,t)-\Delta_X U(x,t)+q(x)U(x,t)=f(x,t), \quad\hbox{for  }(x,t)\in X \times \{0,1,\cdots,T\}, \label{non-homoge} \\
& &U(x,t)|_{t=0}=0, \quad\hbox{for  }x\in X \label{non-homoge-initial}.
\end{eqnarray}
Note that $U^f(\cdot,T)$ is uniquely determined by solving the equations \eqref{non-homoge} for $t=0,1,\cdots,T-1$.
We consider the situation where $f(\cdot,t)$ is real-valued and is only supported on a subset $B\subset X$ for all $t\in \{0,1,\cdots,T-1\}$.

\begin{proposition}\label{prop-control}
Let $(X,E)$ be a finite weighted graph and $B\subset X$.
Assume that there does not exist a nonzero eigenfunction, of the Schr\"odinger operator $-\Delta_X+q$, vanishing identically on $B$.
Then for any $T\geq |X|,\,T\in \Z_+$, we have
$$\big\{U^f(\cdot,T) : \,\supp(f)\subset B\times \{0,1,\cdots,T-1\} \big\}=L^2(X).$$
\end{proposition}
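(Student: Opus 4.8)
The plan is to use the standard duality between controllability and observability: since $f\mapsto U^f(\cdot,T)$ is a linear map between finite-dimensional spaces, its surjectivity onto $L^2(X)$ is equivalent to the triviality of the orthogonal complement of its range. To make this concrete, I would first rewrite the equations \eqref{non-homoge}--\eqref{non-homoge-initial} as a recursion. Writing $A:=\Delta_X-q$, so that $-A=-\Delta_X+q$ is self-adjoint with respect to $\langle\cdot,\cdot\rangle_{L^2(X)}$ by \eqref{self-adjoint}, the equation with zero initial value becomes $U^f(\cdot,t+1)=(I+A)U^f(\cdot,t)+f(\cdot,t)$ for $t=0,\dots,T-1$, and hence
\[
U^f(\cdot,T)=\sum_{s=0}^{T-1}(I+A)^{T-1-s}f(\cdot,s).
\]
Since $\supp(f)\subset B\times\{0,\dots,T-1\}$, each slice $f(\cdot,s)$ is an arbitrary linear combination of the functions $\delta_z$, $z\in B$ (equal to $1$ at $z$ and $0$ elsewhere), so as $f$ ranges over all admissible sources the set in the statement equals the subspace
\[
\mathcal R:=\mathrm{span}\big\{(I+A)^k\delta_z:\ z\in B,\ 0\le k\le T-1\big\}.
\]
It then remains to prove that $\mathcal R^\perp=\{0\}$ in $L^2(X)$.

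Let $w\in\mathcal R^\perp$. Using the self-adjointness of $I+A$ together with $\langle u,\delta_z\rangle_{L^2(X)}=\mu_z u(z)$ and $\mu_z>0$, the condition $w\perp\mathcal R$ is equivalent to $\big((I+A)^kw\big)(z)=0$ for all $z\in B$ and $0\le k\le T-1$. Next I would expand $w$ in an orthonormal eigenbasis of $-\Delta_X+q$ and group terms according to the distinct eigenvalues $\nu_1,\dots,\nu_m$, with orthogonal eigenprojections $P_{\nu_1},\dots,P_{\nu_m}$; note $m\le|X|\le T$, so there are at least $m$ available exponents $k$. Since $(I+A)^kw=\sum_{i=1}^m(1-\nu_i)^k\,P_{\nu_i}w$, the vanishing condition reads, for each fixed $z\in B$,
\[
\sum_{i=1}^m(1-\nu_i)^k\,(P_{\nu_i}w)(z)=0,\qquad k=0,\dots,T-1.
\]
The coefficient matrix $\big((1-\nu_i)^k\big)_{k,i}$ has full column rank $m$, because the nodes $1-\nu_1,\dots,1-\nu_m$ are pairwise distinct and its first $m$ rows form an invertible Vandermonde matrix; hence $(P_{\nu_i}w)(z)=0$ for every $i$ and every $z\in B$. (Equivalently, Lagrange interpolation writes each $P_{\nu_i}$ as a polynomial of degree $\le m-1\le T-1$ in $I+A$, so $P_{\nu_i}\delta_z\in\mathcal R$ directly, giving the same conclusion after pairing with $w$.)

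Finally, each $P_{\nu_i}w$ lies in the $\nu_i$-eigenspace of $-\Delta_X+q$ and vanishes identically on $B$; by the standing hypothesis that no nonzero eigenfunction of $-\Delta_X+q$ vanishes on $B$, we conclude $P_{\nu_i}w=0$ for all $i$, so $w=\sum_i P_{\nu_i}w=0$. Therefore $\mathcal R^\perp=\{0\}$, hence $\mathcal R=L^2(X)$, which is the assertion. The hypothesis $T\ge|X|$ enters only through the rank count for the Vandermonde block (and this is precisely where one sees the statement may fail for small $T$), while the unique continuation hypothesis is what converts ``$P_{\nu_i}w$ vanishes on $B$'' into ``$P_{\nu_i}w=0$'' — this is the only substantive input, everything else being finite-dimensional linear algebra, so I would regard the correct bookkeeping of eigenvalue multiplicities (via the projections $P_{\nu_i}$) rather than a genuine difficulty as the main point requiring care.
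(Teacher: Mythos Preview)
Your proof is correct and rests on the same core ingredients as the paper's: duality (show the orthogonal complement of the range is trivial), expansion in the eigenbasis of $-\Delta_X+q$, a Vandermonde argument on the nodes $1-\nu_i$, and the unique continuation hypothesis to kill each eigencomponent. The difference is in the setup. The paper introduces an explicit backward-time adjoint equation $D_t^\ast\psi-(\Delta_X-q)\psi=0$ with final data $\psi(\cdot,T)=v$, and derives $\psi|_{B\times\{1,\dots,T\}}=0$ from a summation-by-parts identity against $U^f$; only then does it expand $\psi$ in eigenfunctions and invoke the Vandermonde matrix. You instead write the closed-form solution $U^f(\cdot,T)=\sum_{s=0}^{T-1}(I+A)^{T-1-s}f(\cdot,s)$, identify the range as $\mathrm{span}\{(I+A)^k\delta_z\}$, and use self-adjointness of $(I+A)^k$ directly to translate $w\perp\mathcal R$ into $\big((I+A)^kw\big)|_B=0$. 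Your route is more economical in this finite-dimensional setting and handles multiplicities cleanly via the eigenprojections $P_{\nu_i}$ (the paper treats the simple case first and then reduces to a lower-dimensional Vandermonde block); the paper's adjoint-equation formulation is the one that generalizes to the continuous/PDE setting where no closed-form solution operator is available.
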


\begin{proof}
Since $L^2(X)$ is finite dimensional,
it suffices to prove the following statement: if a function $v:X\to \R$ satisfies $\langle U^f(\cdot,T),v \rangle_{L^2(X)}=0$ for all $f: X\times \{0,1,\cdots,T-1\} \to \R$ with support on $B\times \{0,1,\cdots,T-1\}$, then $v=0$. 

Consider the following discrete-time equation
\begin{eqnarray}
& &D_t^{\ast} \psi(x,t)-\Delta_X \psi(x,t)+q(x)\psi(x,t)=0, \quad\hbox{for  }(x,t)\in X \times \{1,2,\cdots,T\}, \label{eq-adjoint} \\
& &\psi(x,t)|_{t=T}=v(x),  \quad\hbox{for  }x\in X, \label{eq-endstate}
\end{eqnarray}
where
\begin{equation}\label{Dt-adjoint}
D_t^{\ast} \psi(x,t):=\psi(x,t-1)-\psi(x,t).
\end{equation}
Observe that the equation \eqref{eq-adjoint} is defined by solving backwards in time from $t=T$, and thus the solution $\psi(\cdot,t)$ uniquely extends to $t=0$.
Then by \eqref{non-homoge} and \eqref{eq-adjoint},
\begin{eqnarray*}
\sum_{t=0}^{T-1} \big\langle \psi(\cdot,t+1),f(\cdot,t) \big\rangle_{L^2(X)} &=& \sum_{t=0}^{T-1} \Big(\big\langle \psi(\cdot,t+1),(D_t -\Delta_X+q)U^f(\cdot,t) \big\rangle_{L^2(X)} \\
&&- \big\langle U^f(\cdot,t),(D_t^{\ast} -\Delta_X+q)\psi(\cdot,t+1) \big\rangle_{L^2(X)} \Big) \\
&=& \sum_{t=0}^{T-1} \Big(\big\langle \psi(\cdot,t+1),D_t U^f(\cdot,t) \big\rangle - \big\langle U^f(\cdot,t),D_t^{\ast}\psi(\cdot,t+1) \big\rangle \Big),
\end{eqnarray*}
where we have used the fact that $\Delta_X$ is self-adjoint on $L^2(X)$ (as the graph has no boundary).
Hence by definitions \eqref{eq-discrete-derivative}, \eqref{Dt-adjoint}, initial conditions \eqref{non-homoge-initial}, \eqref{eq-endstate} and the assumption, we have
\begin{eqnarray*}
\sum_{t=0}^{T-1} \big\langle \psi(\cdot,t+1),f(\cdot,t) \big\rangle_{L^2(X)} &=& \sum_{t=0}^{T-1} \Big(\big\langle \psi(\cdot,t+1),U^f(\cdot,t+1)-U^f(\cdot,t) \big\rangle \\
&&- \big\langle U^f(\cdot,t),\psi(\cdot,t)-\psi(\cdot,t+1) \big\rangle \Big) \\
&=& \sum_{t=0}^{T-1} \Big(\big\langle \psi(\cdot,t+1),U^f(\cdot,t+1) \big\rangle - \big\langle U^f(\cdot,t),\psi(\cdot,t)\big\rangle \Big) \\
&=& \langle U^f(\cdot,T),v \rangle=0.
\end{eqnarray*}
Since $f$ is arbitrary and is supported on $B\times \{0,1,\cdots,T-1\}$, the formula above implies that $\psi|_{B\times \{1,2,\cdots,T\}}=0$. Next, we show that $\psi=0$ on $X\times \{0,1,2,\cdots,T\}$ and hence $v=0$.

\smallskip
Let $\{\phi_j\}_{j=1}^N$ be a choice of orthonormalized eigenfunctions of $-\Delta_X+q$ corresponding to the eigenvalues $\lambda_j$, where $N=|X|$.
By the equation \eqref{eq-adjoint}, the solution $\psi$ can be written as
\begin{equation}\label{heatexpansion}
\psi(x,t)=\sum_{j=1}^N (1-\lambda_j)^{T-t} a_j \phi_j(x), \;\textrm{ for } t=0,1,\cdots,T,
\end{equation}
where $a_j=\langle \psi(\cdot,T),\phi_j \rangle_{L^2(X)}=\langle v,\phi_j \rangle_{L^2(X)}$.

Now assume that there does not exist a nonzero eigenfunction vanishing identically on $B$. For simplicity, assume all eigenvalues have multiplicity one. The conditions $\psi|_{B \times \{1,2,\cdots,T\}}=0$ and $T\geq N$ give at least $N$ number of linear equations with $N$ variables $\{a_j\}_{j=1}^{N}$ at any $z\in B$. 
%We only need to pick $N$ number of linear dependent equations out of them. Let $\tau_0=T/2N$ and we pick the times $\{\tau_0 k: k=0,\cdots N-1\}$. Hence we get $N$ linear equations as follows.
\begin{equation}\label{eq-linear-system}
\sum_{j=1}^{N} (1-\lambda_j)^{T-t} a_j\phi_j(z)=0,\; \textrm{ for }t=1,2,\cdots,T.
\end{equation}
%Suppose there exists a point $z_0\in B$ such that $\phi_j(z_0)\neq 0$ for all $j$. 
We pick the latter $N$ linear equations (i.e. for $t=T-N+1,\cdots,T$) and rewrite these equations at any $z\in B$ in the following matrix form:
\[
  \left( {\begin{array}{cccc}
  1&1&\cdots &1 \\
  1-\lambda_1 & 1-\lambda_2 & \cdots & 1-\lambda_N \\
  (1-\lambda_1)^2 & (1-\lambda_2)^2 & \cdots & (1-\lambda_N)^2 \\
  \cdots &\cdots & \cdots &\cdots \\
  (1-\lambda_1)^{N-1} & (1-\lambda_2)^{N-1} & \cdots & (1-\lambda_N)^{N-1}
  \end{array} } \right) 
  \left( \begin{array}{c} 
  a_1\phi_1(z) \\
  a_2\phi_2(z) \\
  \cdots\\
  a_N\phi_N(z) 
  \end{array}\right)=0.
\]
The square matrix above (a Vandermonde matrix) is invertible, assuming all $\lambda_j$ are distinct:
\[
   \left| {\begin{array}{cccc}
  1&1&\cdots &1 \\
  1-\lambda_1 & 1-\lambda_2 & \cdots & 1-\lambda_N \\
  (1-\lambda_1)^2 & (1-\lambda_2)^2 & \cdots & (1-\lambda_N)^2 \\
  \cdots &\cdots & \cdots &\cdots \\
  (1-\lambda_1)^{N-1} & (1-\lambda_2)^{N-1} & \cdots & (1-\lambda_N)^{N-1}
  \end{array} } \right| = \prod_{1\leq j< l \leq N} (\lambda_j-\lambda_l) \neq 0.
\]
Hence we get $a_j\phi_j(z)=0$ for all $z\in B$ and $j=1,\cdots,N$. As each $\phi_j$ is not identically zero on $B$, then $a_j=0$ for all $j=1,\cdots,N$.

If $\lambda_j$ are not all distinct, then the process above reduces to the non-degeneracy of the Vandermonde matrix of a reduced dimension, and yields that for each $j$,
$$\sum_{k\in L_j} a_k \phi_k(z)=0,\quad \forall z\in B,$$ 
where $L_j=\{k:\lambda_k=\lambda_j\}$. Since there does not exist a nonzero eigenfunction vanishing identically on $B$ by assumption, the vectors $\{\phi_k|_{B}\}_{k\in L_j}$ for each $j$ are linear independent. Hence $a_k=0$ for all $k\in L_j$.
%Thus we get $a_j=0$ for all $j$ showing up in the expansion (\ref{heatexpansion}), which yields $\psi|_{X\times \{0,1,\cdots,T\}}=0$.
\end{proof}

\smallskip
\appendix

\section{Isospectral graphs with identical interior spectral data} \label{counterexample_eigenvector}

In this appendix, we calculate the eigenvalues and corresponding eigenfunctions (eigenvectors) for the isospectral graphs in Figure \ref{fig_isospectral}. It is known in \cite{FK,Tan} that these graphs have the same eigenvalues, that is, the graphs
are isospectral. 
In this appendix, we show that their interior spectral data are actually identical on the subset $B=\{v_1,v_2\}$ of blue vertices. The operator in question is the combinatorial Laplacian, i.e. setting $\mu,g \equiv 1$ in our definition of the graph Laplacian \eqref{Laplaciandef_int}. As a consequence, this phenomenon provides a counterexample to the solvability of an inverse problem for random walks, as explained in the following lemma.

\begin{lemma}\label{lemma-isospectral}
Let $(X,E)$ and $(\bar{X},\bar{E})$ be the two graphs in Figure \ref{fig_isospectral}. We consider the graph Laplacian \eqref{Laplaciandef_int} with weights $\mu\equiv C,\, g\equiv 1$, where $C>0$ is a constant.
Then there exist subsets $B\subset X$ and $\bar{B} \subset \bar{X}$ with $|B|=|\bar{B}|=2$ (marked blue in the figure) such that the following statements hold.
\begin{itemize}
\item[(1)] Let $\Phi:B\to \bar{B}$ be a bijection. Then we can find a complete orthonormal family of eigenfunctions $\phi_j,\, \bar{\phi}_j$ for each graph corresponding to eigenvalue $\lambda_j,\, \bar{\lambda}_j$, such that $\lambda_j=\bar{\lambda}_j$ and $\phi_j|_B = \bar{\phi}_j \circ \Phi|_B$ for all $j$.
\item[(2)] Let $C\geq 4$ and $\Phi:B\to \bar{B}$ be a bijection. Then the random walk process given by \eqref{weights-to-walk} satisfies 
$$\mathbb{P} \big(H_{t}^{y}=z \big)=\mathbb{P}\Big(\bar{H}_{t}^{\Phi(y)}=\Phi(z) \Big)$$
 for all $y,z\in B$ and $t\in \N$. Here $H_t^y,\, t\in \N$ denotes the random walk on $(X,E)$ starting from $y$, and $\bar{H}_t^{\Phi(y)}$ denotes the random walk on $(\bar{X},\bar{E})$ starting from $\Phi(y)$.
\end{itemize}
\end{lemma}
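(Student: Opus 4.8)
The plan is to reduce part (1) to a finite eigenvector computation for the combinatorial Laplacian, and to deduce part (2) from part (1) through the Feynman--Kac representation of Section~\ref{sec-walk}.

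For part (1), first observe that with $\mu\equiv C$ and $g\equiv 1$ the graph Laplacian \eqref{Laplaciandef_int} is exactly $C^{-1}$ times the combinatorial Laplacian \eqref{comb-Laplacian}. Thus its eigenvalues are $C^{-1}$ times the combinatorial ones, every combinatorial eigenfunction is still an eigenfunction, and the only effect of $C$ is on the normalization: since $\langle u,u\rangle_{L^2(X)} = C\sum_{x}u(x)^2$, an $L^2(X)$-orthonormal eigenbasis is obtained by dividing an $\ell^2$-orthonormal one by $\sqrt{C}$, the same rescaling for both graphs. Hence it suffices to verify, for the two graphs of Figure~\ref{fig_isospectral} with the combinatorial Laplacian, that one can choose orthonormal eigenbases $\{\phi_j\}$ and $\{\bar\phi_j\}$ with $\lambda_j=\bar\lambda_j$ and $\phi_j|_B=\bar\phi_j\circ\Phi|_B$. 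This is a finite check: the two graphs are isospectral by \cite{FK,Tan}, and the plan is to write down all eigenvalues together with explicit eigenvectors, order the spectra compatibly, and fix signs so that the two values on $B=\{v_1,v_2\}$ agree (for each of the two possible bijections $\Phi$). I would place this bookkeeping in Appendix~\ref{counterexample_eigenvector}.

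For part (2), note that $C\geq 4$ guarantees $\mu_x=C\geq\deg(x)$ for every vertex, so \eqref{weights-to-walk} does define a random walk satisfying Condition~\ref{condition-Markov}. By \eqref{eq-walk} the expectation $u(x,t)=\mathbb{E}\big(w(H_t^x)\big)$ solves the discrete-time heat equation; taking $w=\delta_z$ gives $u(x,t)=\mathbb{P}(H_t^x=z)$. Expanding $\delta_z$ in the $L^2(X)$-orthonormal eigenbasis of $-\Delta_X$ and using $\langle\delta_z,\phi_j\rangle_{L^2(X)}=\mu_z\phi_j(z)=C\phi_j(z)$, one obtains, for $y,z\in B$ and $t\in\mathbb{N}$,
\[
\mathbb{P}(H_t^y=z)\;=\;C\sum_{j}(1-\lambda_j)^{t}\,\phi_j(y)\,\phi_j(z),
\]
and the identical formula on $(\bar X,\bar E)$ with $\bar\lambda_j,\bar\phi_j$. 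Substituting $\lambda_j=\bar\lambda_j$ and $\phi_j(y)\phi_j(z)=\bar\phi_j(\Phi(y))\bar\phi_j(\Phi(z))$ from part (1) term by term shows the two expansions coincide, which is precisely $\mathbb{P}(H_t^y=z)=\mathbb{P}\big(\bar H_t^{\Phi(y)}=\Phi(z)\big)$.

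I expect the only real difficulty to be the finite computation behind part (1): one must not merely reprove isospectrality but establish the genuinely stronger fact that the isospectral pairs of \cite{FK,Tan} carry eigenbases that match on a two-point set $B$ — this is what makes them counterexamples to the interior inverse problem, and it is not implied by isospectrality alone. Once that is verified, parts (1) and (2) follow as above, part (2) being essentially linear algebra on top of the Feynman--Kac identity.
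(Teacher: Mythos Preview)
Your proposal is correct and follows essentially the same route as the paper: reduce part~(1) to the combinatorial case by the scaling $\Delta_X=C^{-1}\Delta_{\mathrm{comb}}$ and the uniform $C^{-1/2}$ renormalization, then carry out the explicit $6\times 6$ eigenvector computation to check agreement on $B=\{v_1,v_2\}$; for part~(2) use the Feynman--Kac identity with $w=\delta_z$ to write $\mathbb{P}(H_t^y=z)=C\sum_j(1-\lambda_j)^t\phi_j(y)\phi_j(z)$ and invoke part~(1). The paper does exactly this, including the full eigenvector tables in Appendix~\ref{counterexample_eigenvector}; your one additional remark (that $C\ge 4$ is what makes \eqref{weights-to-walk} a bona fide random walk since $\max_x\deg(x)=4$) is a helpful clarification the paper leaves implicit.
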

%The bijection $\Phi$ is chosen first and fixed. It could be either $v_1\to v_1$ or $v_1\to v_2$. With $\Phi$ fixed, we can find a choice of interior spectral data such that they are identical.
%(4.6) shows that the information $\mathbb{P}(H_{T}^{y}=z)$ determine $r(t,y,z)$ by induction. In fact, these two data are equivalent.
%e.g. $P(H_3^y=z)=r(2,y,z) r(1,z,z)+r(3,y,z)$, $P(H_4^y=z)=r(2,y,z) (r(2,z,z)+r(1,z,z)^2)+r(3,y,z)r(1,z,z)+r(4,y,z)$. This shows $r(t,y,z)$ can be computed recursively.

\begin{proof}
(1) Observe that it suffices to prove the first claim for the combinatorial Laplacian, i.e. $\mu,g \equiv 1$. This is because changing $\mu$ from $1$ to $C$ only changes eigenvalues by a factor of $C^{-1}$, and changes normalized eigenfunctions by a factor of $C^{-1/2}$ due to normalization \eqref{innerproduct_int}. Thus we assume $C=1$ for the first claim without loss of generality.

\smallskip
We label the vertices of the two graphs in Figure \ref{fig_isospectral} as follows. First for the graph on the left, we name the upper left vertex $v_3$, the lower middle vertex $v_4$, on the right from top to bottom $v_5,\, v_6$. Then for the graph on the right, we name the lower vertices from left to right $v_3,v_5,v_6$, and the upper right vertex $v_4$. 

For the combinatorial Laplacian, the $L^2(X)$-norm \eqref{innerproduct_int} of a function on $X$ is simply the usual length of the function as a vector in $\mathbb{R}^{|X|}$. 
In the standard basis of functions $\{e_i\}_{i=1}^6$ on $X$ with $e_i(v_k)=\delta_{ik}$, the combinatorial Laplacians $\Delta$ are the following matrices:

$$\Delta(\textrm{left})=\begin{bmatrix} 
-2&0&1&1&0&0 \\
0&-2&1&1&0&0 \\
1&1&-2&0&0&0 \\
1&1&0&-4&1&1 \\
0&0&0&1&-2&1 \\
0&0&0&1&1&-2
\end{bmatrix}, $$

$$\Delta(\textrm{right})=\begin{bmatrix} 
-2&0&1&1&0&0 \\
0&-2&1&1&0&0 \\
1&1&-3&0&1&0 \\
1&1&0&-3&1&0 \\
0&0&1&1&-3&1 \\
0&0&0&0&1&-1
\end{bmatrix}. $$

\noindent After direct calculations, we list their eigenvalues and (a choice of) orthonormalized eigenvectors as follows. We write the eigenfunctions $\phi_j$ in the form of vectors $\big(\phi_j(v_1),\cdots,\phi_j(v_6)\big)$. Compare the 1st, 2nd entries of $\phi_j(\textrm{left})$ with the 1st, 2nd entries of $\phi_j(\textrm{right})$: they are identical for all $j$.

\smallskip
\noindent $\bullet \;\; \lambda_1=0 \,$: 
$$\phi_1(\textrm{left})=\phi_1(\textrm{right})=\frac{1}{\sqrt{6}}(1,1,1,1,1,1).$$

\noindent $\bullet \;\; -\lambda_2=-2 \,$: 
$$\phi_2(\textrm{left})=\frac{1}{\sqrt{2}}(-1,1,0,0,0,0),\quad
\phi_2(\textrm{right})=\frac{1}{\sqrt{2}}(-1,1,0,0,0,0).$$

\noindent $\bullet \;\;  -\lambda_3=-\lambda_4=-3 \,$: 
$$\phi_3(\textrm{left})=\frac{1}{\sqrt{8}}(-1,-1,2,-1,0,1),\quad
\phi_3(\textrm{right})=\frac{1}{\sqrt{8}}(-1,-1,1,0,2,-1).$$
$$\phi_4(\textrm{left})=\frac{1}{\sqrt{2}}(0,0,0,0,1,-1),\quad
\phi_4(\textrm{right})=\frac{1}{\sqrt{2}}(0,0,-1,1,0,0).$$
Apply the Gram-Schmidt orthogonalization:
$$\phi_3^{\perp} (\textrm{left})=(-1,-1,2,-1,\frac12,\frac12), \quad \phi_3^{\perp} (\textrm{right})=(-1,-1,\frac12,\frac12,2,-1).$$
Observe that the length of $\phi_3^{\perp} (\textrm{left})$ is equal to that of $\phi_3^{\perp} (\textrm{right})$.

\smallskip
\noindent $\bullet \;\;  -\lambda_5=\sqrt{5}-3 \,$: 
\begin{eqnarray*}
\phi_5(\textrm{left})&=&\frac{\sqrt{2}}{\sqrt{5}(\sqrt{5}-1)}(\frac{-\sqrt{5}+1}{2},\frac{-\sqrt{5}+1}{2},-1,\sqrt{5}-2,1,1) \\
&=& (-\frac{1}{\sqrt{10}},-\frac{1}{\sqrt{10}},\ast,\ast,\ast,\ast), \\
\phi_5(\textrm{right})&=&\frac{\sqrt{2}}{\sqrt{5}(3-\sqrt{5})}(\frac{\sqrt{5}-3}{2},\frac{\sqrt{5}-3}{2},-\sqrt{5}+2,-\sqrt{5}+2,\sqrt{5}-2,1) \\
&=& (-\frac{1}{\sqrt{10}},-\frac{1}{\sqrt{10}},\ast,\ast,\ast,\ast).
\end{eqnarray*}
%Here we used $35+15\sqrt{5}=\frac{5}{2}(3+\sqrt{5})^2$, and $15+5\sqrt{5}=\frac{5}{2}(\sqrt{5}+1)^2$.

\noindent $\bullet \;\;  -\lambda_6=-\sqrt{5}-3 \,$: 
\begin{eqnarray*}
\phi_6(\textrm{left})&=&\frac{\sqrt{2}}{\sqrt{5}(\sqrt{5}+1)}(\frac{\sqrt{5}+1}{2},\frac{\sqrt{5}+1}{2},-1,-\sqrt{5}-2,1,1) \\
&=& (\frac{1}{\sqrt{10}},\frac{1}{\sqrt{10}},\ast,\ast,\ast,\ast), \\
\phi_6(\textrm{right})&=&\frac{\sqrt{2}}{\sqrt{5}(3+\sqrt{5})}(\frac{\sqrt{5}+3}{2},\frac{\sqrt{5}+3}{2},-\sqrt{5}-2,-\sqrt{5}-2,\sqrt{5}+2,-1) \\
&=& (\frac{1}{\sqrt{10}},\frac{1}{\sqrt{10}},\ast,\ast,\ast,\ast).
\end{eqnarray*}

\medskip
(2) For the second claim, it is convenient to use the equation \eqref{eq-walk-intro}.
The formula \eqref{eq-pxy-intro} shows that $u(x,t):=\mathbb E(w(H_t^{x}))$ satisfies the heat equation \eqref{eq-walk-intro} with the initial value $w$, where the associated graph Laplacian has weights $\mu\equiv C\geq 4,\, g\equiv 1$.
Let $z\in B$ and take the initial value $w(x)=\delta_z(x)$ in the equation \eqref{eq-walk-intro}-\eqref{eq-walk-initial-intro}. Then we see that 
the solution $u(x,t)$ is 
\ba
u(x,t)=\mathbb E(\delta_{z}(H_{t}^{x})) =\mathbb{P}(H_{t}^{x}=z).
\ea 
On the other hand, we can write the solution $u(x,t)$ in terms of eigenvalues and (orthonormalized) eigenfunctions by using the equation \eqref{eq-walk-intro}. Namely, 
$$
u(x,t)=\sum_{j=1}^{6} (1-\lambda_j)^t \widehat{w}_j \phi_j(x),
$$
where
$$\widehat w_j=\langle w,\phi_j\rangle_{L^2(X)}= \langle \delta_z,\phi_j\rangle_{L^2(X)} = C \phi_j(z). $$
Since the eigenvalues and eigenfunctions of the graph Laplacian with weights $\mu\equiv C,\, g\equiv 1$ coincide for the two graphs on $B$ due to (1), then $\widehat w_j$ and hence $u|_{B\times \N}$ coincide. Thus varying $z\in B$ shows that $\mathbb{P}(H_{t}^{y}=z)$ coincide for the two graphs for all $y,z\in B,\, t\in \N$.
\end{proof}

\bigskip

\end{document}